\def\ifundefined#1{\expandafter\ifx\csname#1\endcsname\relax}
\theoremstyle{plain}
\newtheorem{theorem}{Theorem}[section]
\newtheorem{lemma}[theorem]{Lemma}
\newtheorem{corollary}[theorem]{Corollary}
\newtheorem{theorem*}{Theorem}
\newtheorem{remark}{Remark}[section]
\theoremstyle{definition}
\newlength{\normalparindent}
\numberwithin{equation}{section}
\mathchardef\sa="303A
\newcommand{\R}{\ensuremath{\mathbf{R}}}
\newcommand{\C}{\ensuremath{\mathbf{C}}}
\newcommand{\Np}[3][p]{\ensuremath{\mathcal{N}_{#1}(#2 \, ; \, #3)}} 
\newcommand{\Npp}[3][p]{\ensuremath{\mathcal{N}_{#1}\bigl(#2 \, ; \, #3\bigr)}}
\newcommand{\Nppp}[3][p]{\ensuremath{\mathcal{N}_{#1}\biggl(#2 \, ; \, #3\biggr)}}
\newcommand{\Npw}[1][p]{\ensuremath{\mathcal{N}_{#1}}}
\newcommand{\Qw}[2]{\ensuremath{Q_{#1,#2}}} 
\newcommand{\Ckc}[1][\infty]{C^{#1}_c(\R^N \setminus \{0\})}
\newcommand{\Ck}[1][\infty]{C^{#1}_c(\R^N)}
\newcommand{\Yone}{\ensuremath{Y^{1,p}(\R^N)}}
\newcommand{\Yonez}{\ensuremath{Y^{1,p}_{0}(\R^N)}}
\newcommand{\Yonew}{\ensuremath{Y^{1,p}_{M}(\R^N)}}
\newcommand{\Wloc}{\ensuremath{W^{1,p}_{\mathrm{loc}}(\R^N \setminus \{ 0 \})}}
\newcommand{\Lloc}[1][1]{\ensuremath{L^{#1}_{\mathrm{loc}}(\R^N \setminus \{ 0 \})}}
\newcommand{\Xp}[1][1]{\ensuremath{X^p(\R^N)}}
\newcommand{\Xpett}[1][1]{\ensuremath{X^1(\R^N)}}
\newcommand{\Ew}[3][1]{\ensuremath{E({#1},{#2},{#3})}} 
\newcommand{\Ewt}[2][1]{\ensuremath{E({#1},{#2})}} 
\newcommand{\Ewn}[2][\xi]{\ensuremath{%
\int_{0}^{\infty} \!\! \int_0^{\infty} \Ew{\rho}{#1} \, #2 \, \frac{d#1}{#1} \, \frac{d\rho}{\rho}%
}}
\newcommand{\Awo}{\ensuremath{E}}
\newcommand{\Awt}{\ensuremath{\Sigma^{+}}}
\newcommand{\Awtm}{\ensuremath{\Sigma^{-}}}
\newcommand{\Awtpm}{\ensuremath{\Sigma^{\pm}}}
\newcommand{\ld}[1][x]{\ensuremath{\Phi(#1)}}
\newcommand{\vp}{\ensuremath{\varphi}}
\newcommand{\lc}[1][x]{\ensuremath{\Lambda(#1)}}
\newcommand{\lcw}{\ensuremath{\Lambda}}
\newcommand{\lcwz}{\ensuremath{\Lambda_0}}
\newcommand{\lcwm}{\ensuremath{\Lambda_{*}}}
\newcommand{\pval}{\ensuremath{\mathrm{p.v. }}} 
\newcommand{\lcwe}{\ensuremath{\lambda}}
\newcommand{\K}{\ensuremath{\mathscr{K}}} 
\newcommand{\KK}{\ensuremath{K}}
\newcommand{\apa}[2][\, \cdot \, ]{(\ensuremath{p( #2 \, \text{;} \, #1))}} 
\newcommand{\pa}[2][\alpha]{\ensuremath{p( #2 \, \text{;} \, #1)}}
\newcommand{\X}{\ensuremath{\mathscr{X}}} 
\newcommand{\DK}{\ensuremath{\mathscr{D}_{\K}}} 
\newcommand{\DKS}{\ensuremath{\mathscr{D}_{\K,\sigma}}} 
\newcommand{\DKK}{\ensuremath{\mathscr{D}_{\KK}}} 
\newcommand{\KZ}{\ensuremath{k_0}} 
\newcommand{\I}{\ensuremath{\Omega}} 
\newcommand{\RI}{\ensuremath{\R^{\I}}}
\renewcommand{\S}{\ensuremath{\mathcal{S}}}
\newcommand{\Ri}{\ensuremath{\mathcal{R}}}
\newcommand{\Ie}{\ensuremath{\mathcal{I}}}
\newcommand{\Bs}{\ensuremath{\mathscr{B}}}
\newcommand{\Clcwzs}{\ensuremath{C_{\lcwz}(s)}}
\newcommand{\dSy}{\ensuremath{\sqrt{1 + |\nabla \varphi(y)|^2}}}
\newcommand{\dSyw}{\ensuremath{\omega}}
\newcommand{\psif}{\ensuremath{\psi}}
\newcommand{\psifd}{\ensuremath{\widetilde{\psi}}}
\newcommand{\Ipsi}{\ensuremath{\Ie^{\psif}}}
\newcommand{\Rkpsi}{\ensuremath{R_k^{\psif}}}
\newcommand{\Lx}{\ensuremath{L_x}}
\newcommand{\Lxy}{\ensuremath{L_{xy}}}
\newcommand{\Ly}{\ensuremath{L_y}}
\newcommand{\kernd}[2]{\ensuremath{K_d({#1} , \, {#2})}}
\newcommand{\lu}{\ensuremath{{v}}}
\newcommand{\cop}{\ensuremath{\eta_{+}}}
\newcommand{\com}{\ensuremath{\eta_{-}}}
\newcommand{\coc}{\ensuremath{\eta_0}}
\newcommand{\copm}{\ensuremath{\eta_{\pm}}}
\newcommand{\pk}[1]{\ensuremath{\frac{\partial}{\partial_{{#1}_k}}}}
\newcommand{\pks}[1]{\ensuremath{\partial_k}}
\newcommand{\hlf}[1]{\ensuremath{\frac{{#1}}{2}}}
\newcommand{\Jp}{\ensuremath{J_{+}}}
\newcommand{\Jm}{\ensuremath{J_{-}}}
\newcommand{\Jpm}{\ensuremath{J_{\pm}}}
\newcommand{\Jc}{\ensuremath{J_{0}}}
\newcommand{\Jcs}{\ensuremath{J_{0}^{\mbox{\scriptsize s}}}}
\newcommand{\Jcns}{\ensuremath{J_{0}^{\mbox{\scriptsize ns}}}}
\newcommand{\Jpr}{\ensuremath{\Xi_{+}}}
\newcommand{\Jmr}{\ensuremath{\Xi_{-}}}
\newcommand{\Jpmr}{\ensuremath{\Xi_{\pm}}}
\newcommand{\Nhi}{\ensuremath{M}}
\newcommand{\Nh}{\ensuremath{M}}
\newcommand{\Ca}{\ensuremath{c_1}}
\newcommand{\Cb}{\ensuremath{c_3}}
\newcommand{\Cc}{\ensuremath{c_2}}
\newcommand{\qxy}{\ensuremath{\frac{|x|}{|y|}}}
\newcommand{\axy}{\ensuremath{a(x,y)}}
\newcommand{\bgp}[1]{\ensuremath{\biggl( {#1} \biggr)}}
\newcommand{\ck}{\ensuremath{C_K}}
\title{Single Layer Potentials on Surfaces with Small Lipschitz constant}
\author{Vladimir Kozlov}
\author{Johan Thim\footnote{Corresponding author: {\tt johan.thim@liu.se}}}
\author{Bengt Ove Turesson}
\affil{\small Department of Mathematics, University of Link\"{o}ping, Link\"{o}ping, Sweden}
\date{\today}
\begin{document}

\maketitle
\begin{abstract}
\noindent
This paper considers to the equation
\begin{equation*}
\int_{S}
\frac{U(Q)}{|P-Q|^{N-1}} \, dS(Q) = F(P) \text{,} \quad P \in S \text{,}
\end{equation*}
where the surface~$S$ is the graph of a Lipschitz function~$\vp$
on~$\R^N$, which has a small Lipschitz constant. The integral in the left-hand side
is the single layer potential corresponding to the Laplacian in~$\R^{N+1}$.
Let~$\lc[r]$ be a Lipschitz constant of~$\vp$ on the ball
centered at the origin with radius~$2r$.
Our analysis is carried out in local~$L^p$-spaces and local Sobolev spaces,
where~$1 < p < \infty$, and results are presented in terms of~$\lcw$.
Estimates of solutions to the equation are provided, which can be used
to obtain knowledge about the behaviour of the solutions near a point on the surface. 
The estimates are given in terms of seminorms.
Solutions are also shown to be unique if they are subject to 
certain growth conditions. 
Local estimates are provided and some applications are supplied. 
\end{abstract}

\ifundefined{thesis} \else
\pagenumbering{arabic}
\pagestyle{fancyplain}
\fi


\section{Introduction}
Let~$S$ be the graph in~$\R^{N+1}$ of a Lipschitz function~$\vp \colon \R^N \rightarrow \R$, where~$N \geq 2$.
We define~$\lcw$ to be a function on~$(0,\infty)$ such that
\begin{equation}
\label{eq:def_lc}
|\vp(x) - \vp(y)| \leq \lc[r] |x-y| \quad \text{for} \quad  |x|,\;|y| \leq 2r 
\end{equation}
for every~$r > 0$. The function~$\lcw$ is assumed to be increasing and bounded: 
\begin{equation}
\label{eq:i:lip_cond}
\lc[r] \leq \lcw_0 \quad \text{for every } r > 0.
\end{equation}
We will assume that~$\lcwz$ is sufficiently small.
One can choose~$\lc[r]$ to be the optimal constant in~(\ref{eq:def_lc}) and then~$\lcw_0$ is the 
(global) Lipschitz constant of~$\vp$. 

We consider the {\it single layer potential} on the surface~$S$: 
\begin{equation}
\label{eq:simplelayerpot}
\int_{S} \frac{U(Q)}{|P-Q|^{N-1}} \, dS(Q) \text{,} \quad P \in S \text{,}
\end{equation}
where~$dS$ is the Euclidean surface measure.
This object is important since, for instance, it appears when one applies the direct 
approach to solve Laplace's equation corresponding 
boundary integral equation; see, e.g., Hsiao and Wendland~\cite{wendland}.
If the surface is the hyperplane~$x_{N+1} = 0$, then we obtain the classical Riesz potential
of order one. 
The main objective of this article is to find a solution~$u$ to the equation
\begin{equation}
\label{eq:maineq}
\S u (x) = f(x) \text{,} \quad x \in \R^N \text{,}
\end{equation}
where
\[
\S u(x) = 
\int_{\R^N} \frac{u(y) \sqrt{1 + |\nabla \vp(y)|^2}}{|\ld[x] - \ld[y]|^{N-1}} \, dy \text{,} \quad x \in \R^N \text{.}
\]
Here,~$\ld[x] = (x,\vp(x))$ for~$x \in \R^N$, and~$\S u$ is the parametrization of the single layer potential
in~(\ref{eq:simplelayerpot}).

More specifically, we will consider equation~(\ref{eq:maineq}) 
for~$u \in \Lloc[p]$, where~$1 \leq p < \infty$, and~$f \in \Wloc$, where~$1 < p < \infty$.
We will formulate our results in terms of the family of
seminorms defined by
\[
\Np{u}{r} = \biggl( \frac{1}{r^N} 
	\int_{r \leq |x| < 2r} |u(x)|^p \, dx \biggr)^{1/p}
\text{,} \quad r > 0  \text{.}
\]
To simplify upcoming notation, let 
$\Qw{m}{n}(t) = t^{m}$ if~$0 < t \leq 1$, and
$\Qw{m}{n}(t) = t^{n}$ if~$t > 1$,
for non-negative~$m$ and~$n$.
For~$1 \leq p < \infty$, the Banach space~$\Xp[1]$ 
consists of all functions~$u$ that belong to~$\Lloc[p]$ and
satisfy 
\begin{equation}
\label{eq:defXp}
\int_{0}^{\infty} \Qw{N}{1}(\rho) \, \Np{u}{\rho} \, \frac{d\rho}{\rho} < \infty
\text{.}
\end{equation}
We take this expression as the norm on~$\Xp[1]$. This space is the
natural domain, in terms of the seminorms~$\Npw$, 
for the operator~$\S$ in the case that~$S$ is the hyperplane~$x_{N+1} = 0$; 
this is discussed further by the authors in~\cite{thim1}. We also remark that, if~$1 \leq p < N$,
then~$L^p(\R^N) \subset \Xp[1]$. If~$p \geq N$, there exist functions in~$L^p(\R^N)$ which do
not belong to~$\Xp$.

For~$1 < p < \infty$ and~$0 \leq \Nhi \leq N$,
the normed space~$\Yonew$ consists of all functions~$f$ in~$\Wloc$ such that
\begin{equation}
\label{eq:Yonew}
\int_0^{\infty} \Qw{\Nhi}{1}(\rho) \, \Np{\nabla f}{\rho} \, \frac{d\rho}{\rho} < \infty
\end{equation}
and~$\lim_{r \rightarrow \infty} \int_{S^{N-1}} f(r\theta) \, dS(\theta) = 0$, where~$S^{N-1}$ is the unit sphere in~$\R^N$.
The left-hand side of~(\ref{eq:Yonew})
defines the norm on this Banach space. 
The condition in~(\ref{eq:Yonew}) implies that the limit in the definition exists. This limit 
ensures that, e.g., constants do not belong to~$\Yonew$. The reason that functions of this type are excluded is that
we cannot expect to find a solution to~$\S u = f$ in this case; indeed, if~$f$ is a nonzero constant, 
then the solution~$u$ in
Theorem~\ref{t:i:exist} below would have to be~$u = 0$, which obviously does not solve~$\S u = f$ in any reasonable
way.
Furthermore, if~$f \in \Wloc$ such that~$|\nabla f| \in L^p(\R^N)$ for some~$N/\Nhi < p < N$,
then~$f \in \Yonew$.

In Section~\ref{s:proof_exist}, we prove the following existence result.

\begin{theorem}
\label{t:i:exist}
There exist positive constants\/~$\lcwm$,\/~$\Ca$,\/~$\Cc$, and\/~$\Cb$, depending only on\/~$N$
and\/~$p$, such that if\/~$\lcwz \leq \lcwm$ and 
if\/~$f \in \Yonew$ with\/~$M = N - \Cc \lcwz$ and~\/$1 < p < \infty$,
then the equation~{\rm(}\ref{eq:maineq}{\rm)} has a solution\/~$u \in \Xp$.
For\/~$r > 0$, this solution satisfies
\begin{equation}
\label{eq:i:est_sol}
\begin{aligned}
\Np{u}{r} \leq {} & \Cb \int_0^{r} \left( \frac{\rho}{r} \right)^{\Nhi} \Np{\nabla f}{\rho} \, \frac{d\rho}{\rho}\\
& + \Cb \int_r^{\infty} \exp \biggl( 
\Ca \int_r^{\rho} \lcw(\nu) \, \frac{d\nu}{\nu}
\biggr) \, \Np{\nabla f}{\rho} \, \frac{d\rho}{\rho}.
\end{aligned}
\end{equation}
\end{theorem}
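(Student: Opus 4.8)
The plan is to treat the equation as a perturbation of the flat case. When~$S$ is the hyperplane~$x_{N+1} = 0$ the operator~$\S$ is the Riesz potential of order one, and~(\ref{eq:maineq}) together with sharp seminorm estimates in the scale~$\Npw$ has been solved in that situation in~\cite{thim1}. Since only differences of~$\vp$ enter~$\S$ we may assume~$\vp(0) = 0$, and we write
\[
\S = \S_0 + \mathcal{T}, \qquad \mathcal{T}u(x) = \int_{\R^N} u(y)\left( \frac{\dSy}{|\ld[x] - \ld[y]|^{N-1}} - \frac{1}{|x-y|^{N-1}} \right) dy,
\]
where~$\S_0$ is the flat operator. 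Because~$|\vp(x) - \vp(y)| \leq \lc[r]\,|x-y|$ and~$|\nabla\vp| \leq \lc[r]$ almost everywhere on the ball of radius~$2r$, the kernel of~$\mathcal{T}$ is, on each dyadic shell~$|x| \approx |y| \approx r$, a multiple of~$|x-y|^{-(N-1)}$ that is small as~$\lcwz \to 0$ and, more precisely, controlled by~$\lcw$ evaluated near the scale in question; the relevant off-diagonal pieces (input at scale~$\rho$, output at scale~$r$) obey analogous bounds with~$\lcw$ taken at the larger scale. One then seeks the solution as the Neumann series~$u = \sum_{j \geq 0} (-1)^j (\S_0^{-1}\mathcal{T})^j \S_0^{-1} f$, i.e.\ as the fixed point of~$u \mapsto \S_0^{-1}(f - \mathcal{T}u)$.

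First I would record, from~\cite{thim1}, the mapping properties of~$\S_0^{-1}$: because the Riesz inverse of order one is a first order operator it is governed by the gradient of its argument, $\S_0^{-1}$ maps~$\Yonew$ into~$\Xp$, and at the level of seminorms
\[
\Np{\S_0^{-1} g}{r} \leq C \int_0^r \left( \frac{\rho}{r} \right)^{N} \Np{\nabla g}{\rho}\, \frac{d\rho}{\rho} + C \int_r^\infty \Np{\nabla g}{\rho}\, \frac{d\rho}{\rho};
\]
the normalization~$\lim_{r \to \infty} \int_{S^{N-1}} f(r\theta)\, dS(\theta) = 0$ is what makes the passage from~$\nabla f$ back to~$f$, hence the definition of~$\S_0^{-1}f$, unambiguous. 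Next I would estimate~$\mathcal{T}$ in the same scale: combining the pointwise kernel bound with Hölder's inequality on dyadic annuli shows that~$\mathcal{T}$ maps~$\Xp$ into such a gradient space with a gain that vanishes as~$\lcwz \to 0$, so that~$\S_0^{-1}\mathcal{T}$ is bounded on~$\Xp$ with norm tending to~$0$ with~$\lcwz$. Fixing~$\lcwm$ so that this norm is strictly less than~$1$ once~$\lcwz \leq \lcwm$ makes the Neumann series converge and yields a solution~$u \in \Xp$ of~(\ref{eq:maineq}).

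The refined bound~(\ref{eq:i:est_sol}) does not follow from the crude operator norm; instead one iterates the scale-resolved inequalities. Inserting the seminorm estimate for~$\S_0^{-1}$ and the scale-by-scale estimate for~$\mathcal{T}$ into~$u = \S_0^{-1}f - \S_0^{-1}\mathcal{T}u$ produces a weighted integral inequality of the schematic form
\[
\Np{u}{r} \leq \Cb \int_0^r \left( \frac{\rho}{r} \right)^{N} \Np{\nabla f}{\rho}\, \frac{d\rho}{\rho} + \Cb \int_r^\infty \Np{\nabla f}{\rho}\, \frac{d\rho}{\rho} + \Ca \int_0^\infty \lcw\bigl(\max\{r,\rho\}\bigr)\, G\!\left(\frac{\rho}{r}\right) \Np{u}{\rho}\, \frac{d\rho}{\rho},
\]
with a kernel~$G$ that decays away from~$\rho = r$. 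A Gronwall-type lemma for inequalities of this kind — iterating the last term and resumming — then upgrades~$(\rho/r)^{N}$ to~$(\rho/r)^{\Nhi}$ with~$\Nhi = N - \Cc\lcwz$ on the part~$\rho < r$, and the constant~$1$ to~$\exp\bigl( \Ca \int_r^\rho \lcw(\nu)\, \frac{d\nu}{\nu} \bigr)$ on the part~$\rho > r$; keeping~$\lcw(\nu)$ under the integral, rather than bounding it by~$\lcwz$, is precisely what makes the estimate improve when~$S$ flattens near the origin. The membership~$u \in \Xp$ is then reproved quantitatively by multiplying~(\ref{eq:i:est_sol}) by~$\Qw{N}{1}(r)\, \frac{dr}{r}$, integrating over~$r > 0$ and using Fubini: the choice~$\Nhi = N - \Cc\lcwz < N$ controls the contribution of the first integral near~$0$, while~$\lcw \leq \lcwz$ (so that the exponential is at most~$(\rho/r)^{\Ca\lcwz}$) controls the second integral at infinity, and the resulting double integral is dominated by~$\|f\|_{\Yonew}$.

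The main obstacle is the kernel analysis for~$\mathcal{T}$. One needs pointwise bounds, and in particular off-diagonal and gradient bounds, for the difference kernel with the dependence on~$\lcw$ (not just on the constant~$\lcwz$) made explicit and sharp enough both to give a genuine contraction uniformly over all dyadic scales and to produce the exponent~$N - \Cc\lcwz$ and the weight~$\exp(\Ca \int \lcw)$ with the stated constants. The region where~$|x-y|$ is small relative to~$|x| \approx |y|$ — the true singularity, where the difference with the flat kernel must be controlled through a modulus-of-continuity argument for~$\vp$ rather than by size alone — and the regions~$|x| \ll |y|$ and~$|x| \gg |y|$ — where~$\vp(x) - \vp(y)$ is comparable to~$|x-y|$ and the perturbation is largest — each require separate, careful treatment, and this is where the bulk of the work lies.
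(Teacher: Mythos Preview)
Your overall strategy --- perturb off the flat case, set up a fixed point, then upgrade the crude contraction to the refined seminorm estimate by iterating a scale-resolved inequality --- is precisely the paper's. The gap is in the choice of the unperturbed operator. You take $\S_0 = \Ie$; the paper explicitly warns that this ``natural approach of approximating the surface~$S$ with the hyperplane~$x_{N+1}=0$ does not yield sufficiently strong estimates,'' and uses instead the \emph{weighted} Riesz potential $\Ipsi u = \Ie(\psi u)$ with $\psi(y) = |y|^{N+1}\dSyw(y)\,(|y|^2+\vp(y)^2)^{-(N+1)/2}$, chosen so that the \emph{derivative} kernels of $\Ipsi$ and $\S$ agree at $x=0$.

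Here is where your scheme breaks. With the weight, the derivative of the difference kernel in the regime $|y|\gg|x|$ is controlled by $|y|^{-N}\bigl(\lcw(|y|)^2|x|/|y|+\lcw(|x|)\lcw(|y|)\bigr)$ (Lemmas~\ref{l:est_LxyLy}--\ref{l:est_kernd} and Theorem~\ref{t:est_diff_I_1_S}): every term carries either a factor $\lcw(|x|)$ at the \emph{output} scale or the extra decay $|x|/|y|$. Without the weight, $\partial_k(\Ie-\S)$ does not vanish at $x=0$ --- the bracket $1-\dSyw(y)(1+\Lxy^2)^{-(N+1)/2}$ is of size $\lcw(|y|)^2$ there, not zero --- and the best off-diagonal bound is $|y|^{-N}\lcw(|y|)^2$, with no smallness in $|x|$. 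When you feed this into the Gronwall step with the target weight $\Awt(s,t)=\exp\bigl(\Ca\int_s^t\lcwe(\nu)\,d\nu\bigr)$, the missing factor forces you to control $\int_s^t \Awt(s,\tau)\,d\tau$ by $C\,\Awt(s,t)$, and this fails: on any interval where $\lcw$ is very small, $\Awt(s,\tau)\approx 1$ and the integral is $\approx t-s$, unbounded. The factor $\lcwe(\tau)$ at the output scale is exactly what converts this into the harmless $\int_s^t\lcwe(\tau)\Awt(s,\tau)\,d\tau = \Ca^{-1}\bigl(\Awt(s,t)-1\bigr)$; that identity is how Lemma~\ref{l:est_kern_Ku} closes the iteration. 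Your plan would still give a contraction in $\Xp$ for small $\lcwz$ (replace $\lcw(\nu)$ by the constant $\lcwz$ in the exponential and the integral becomes $\leq (\Ca\lcwz)^{-1}\Awt(s,t)$), hence existence of a solution, but it cannot produce~(\ref{eq:i:est_sol}) with $\lcw(\nu)$ kept under the integral. The weighted approximation is the device that makes the sharp estimate possible, not a convenience.
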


\noindent
Even if the objects are different, Theorem~\ref{t:i:exist} is closely related to 
results obtained by V.A.~Kozlov and V.G.~Maz'ya for ordinary differential equations 
and ordinary differential equations with operator coefficients; see Section 6.4 in~\cite{kozlovmazyaSLE}
and Section 6.3 in~\cite{kozlovmazyaDEOC}. 
It is possible to use~(\ref{eq:i:est_sol}) to obtain two-weighted estimates for solutions to~(\ref{eq:maineq})
in weighted~$L^p$-spaces and weighted Sobolev spaces similar to those found in Section~7.5 in~\cite{kozlovmazyaDEOC};
see Section~8 in the authors' article~\cite{thim1} for an example of this procedure when the surface~$S$ is 
the hyperplane~$x_{N+1} = 0$. 
Furthermore, one can also compare
with the boundedness results for Riesz potentials in local Morrey-type spaces found in Burenkov et al.~\cite{burenkov2,burenkov1}
and references cited therein.

If~$\lcwz = 0$, we recover the same estimate in~(\ref{eq:i:est_sol}) for the solution
as in the case when~$S$ is the hyperplane~$x_{N+1} = 0$; compare with~(\ref{eq:NpR}) below. 
Furthermore, if~$\lcwz \rightarrow 0$, the condition in Theorem~\ref{t:i:exist} that~$f \in \Yonew$ 
reduces to 
the corresponding requirement for the hyperplane-case; 
see Corollary~\ref{c:cont_req_sol}. 

In Section~\ref{s:proof_uniq}, we prove that solutions to~(\ref{eq:maineq}) are unique if they
satisfy certain properties. More specifically, we have the following result.

\begin{theorem}
\label{t:i:uniq}
Suppose that\/~$u \in \Lloc[p]$,\/~$1 < p < \infty$, satisfies~{\rm(}\ref{eq:defXp}{\rm)} and 
\begin{equation}
\label{eq:est_uniq_infty}
\Np{u}{r} = 
O\biggl( \exp \biggl( -\Ca \int_1^r \lcw(\nu) \, \frac{d\nu}{\nu} \biggr) \biggr)   \quad \text{as } r \rightarrow \infty \text{,}
\end{equation}
and
\begin{equation}
\label{eq:est_uniq_zero}
\Np{u}{r} = 
O\bigl( r^{-\Nhi} \bigr)  \quad \text{as } r \rightarrow 0 \text{,}
\end{equation}
where\/~$\Ca$,\/~$\Cc$,\/~$\Nhi$, and\/~$\lcwm$ are in Theorem~\ref{t:i:exist}, and\/~$\lcwz \leq \lcwm$.
If\/~$\S u = 0$, then it follows that\/~$u = 0$.
\end{theorem}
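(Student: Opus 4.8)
The plan is to prove uniqueness by reducing it to the existence theorem via a duality/estimate argument, or more directly, by deriving a self-improving estimate for $u$ that forces $u \in \Xp$ with arbitrarily good decay, and then applying the estimate~(\ref{eq:i:est_sol}) with $f = 0$. First I would observe that if $\S u = 0$ and $u$ satisfies the two growth bounds~(\ref{eq:est_uniq_infty}) and~(\ref{eq:est_uniq_zero}), then the strategy is to show $u$ actually lies in the space $\Xp$ where the a priori estimate applies. The condition~(\ref{eq:defXp}) is assumed directly, so $u \in \Xp$ is given; the real content is that the \emph{seminorm estimate} analogous to~(\ref{eq:i:est_sol}) must hold for $u$ even though $u$ is only assumed to solve $\S u = 0$ rather than being the specific solution constructed in Section~\ref{s:proof_exist}.

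The key step is therefore to establish, for any solution $u \in \Xp$ of $\S u = 0$, an integral inequality of the form
\[
\Np{u}{r} \leq \Ca \int_0^r \left(\frac{\rho}{r}\right)^{\Nhi} \Np{u}{\rho} \, \frac{d\rho}{\rho}
+ \Ca \int_r^\infty \exp\biggl( \Ca \int_r^\rho \lcw(\nu) \, \frac{d\nu}{\nu} \biggr) \Np{u}{\rho} \, \frac{d\rho}{\rho} \cdot \lcwz,
\]
i.e. a fixed-point-type bound with a small factor $\lcwz$ multiplying the ``tail'' contributions; this should come from splitting $\S$ into the flat Riesz potential (for which the kernel estimates and the mapping properties from~\cite{thim1} are sharp and invertible) plus a remainder controlled by $\lcwz$, exactly as in the proof of Theorem~\ref{t:i:exist}. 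Applying Gronwall-type reasoning to this inequality, combined with the smallness of $\lcwz \leq \lcwm$, should propagate the decay: starting from~(\ref{eq:est_uniq_infty}) one bootstraps to show $\Np{u}{r}$ decays faster than \emph{any} admissible rate as $r \to \infty$, and starting from~(\ref{eq:est_uniq_zero}) one improves the blow-up rate at $0$ below the threshold $r^{-\Nhi}$. Iterating drives $\Np{u}{r} \to 0$ identically, whence $u = 0$.

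Concretely, the steps in order: (1) recall the decomposition of $\S$ used for existence and the resulting kernel bounds, isolating the flat part $\S_0$ (Riesz potential of order one) and the perturbation $\S - \S_0$ whose operator ``norm'' between the relevant seminorm-weighted spaces is $O(\lcwz)$; (2) using $\S u = 0$, write $\S_0 u = -(\S - \S_0)u$ and invert $\S_0$ via the explicit flat-case analysis of~\cite{thim1} to obtain $u = -\S_0^{-1}(\S-\S_0)u$, yielding a linear integral inequality for the seminorms $\Np{u}{r}$ with small coefficient; (3) feed in the hypotheses~(\ref{eq:est_uniq_infty}) and~(\ref{eq:est_uniq_zero}) as the initial datum in this inequality and run the iteration, using the exponential weight $\exp(\Ca \int \lcw \, d\nu/\nu)$ and the power weight $r^{-\Nhi}$, $\Nhi = N - \Cc\lcwz$, to absorb the constants at each stage; (4) conclude that the only seminorm profile compatible with the inequality and the a priori bounds is the zero profile, so $\Np{u}{r} = 0$ for all $r$, i.e. $u = 0$ a.e.

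The main obstacle I expect is step (2)--(3): making the inversion of the flat operator $\S_0$ on the level of seminorms quantitative enough that the perturbation estimate genuinely carries the small factor $\lcwz$ \emph{uniformly in $r$}, and simultaneously handling the two different regimes ($r \to 0$ with the power weight, $r \to \infty$ with the exponential weight) without the constants $\Ca$, $\Cc$, $\Cb$ degrading under iteration. One must be careful that the growth conditions~(\ref{eq:est_uniq_infty})--(\ref{eq:est_uniq_zero}) are precisely calibrated so that the ``homogeneous solutions'' of the limiting flat problem — which would otherwise obstruct uniqueness — are excluded; in the flat case these correspond to the borderline decay/growth rates $r^{-N}$ at zero and the constant/slowly-growing behaviour at infinity, and the $O(\cdot)$ in the hypotheses is exactly what rules them out once the $\lcwz$-perturbation is switched on. Verifying this calibration, likely by reusing lemmas from Section~\ref{s:proof_exist} with the roles of $f$ and $u$ interchanged, is where the delicate bookkeeping lies.
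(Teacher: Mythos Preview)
Your overall strategy---reduce $\S u = 0$ to a fixed-point identity $u = (\text{small operator})\,u$ on the seminorm level and then iterate---is exactly the paper's route. But there is one substantive correction to make, and it is precisely the obstacle you flag in your last paragraph.

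The paper does \emph{not} take the flat Riesz potential $\Ie$ as the main part $\S_0$. It uses the weighted operator $\Ipsi u = \Ie(\psif u)$ of Section~\ref{s:appr_S}, with $\psif(y) = |y|^{N+1}\dSyw(y)/(|y|^2+\vp(y)^2)^{(N+1)/2}$. The introduction says this explicitly: the naive approximation by the hyperplane ``does not yield sufficiently strong estimates.'' The reason is visible in Lemma~\ref{l:S_I_bound_L2} and Corollary~\ref{c:S_I_bound_Lp}: with the weight $\psif$, the local $L^p$-bound on $T_k - c_N^{-1}\Rkpsi$ carries a factor $\lcw(r)^2$, not merely $\lcw(r)$. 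This quadratic gain is what produces the kernel $\Ewt[r]{\rho}$ of Theorem~\ref{t:est_diff_I_1_S}, and in turn makes the constant $c$ in Lemma~\ref{l:est_kern_Ku} of the form $(\lcwz/\Nh + 1/\Ca + 1/\Cc)^2$, which can be driven below $1/C_K$ by choosing $\Ca,\Cc$ large. With the unweighted $\Ie$ the perturbation would only be $O(\lcw)$ and the resulting integral inequality would not contract against the weights $\Awtpm$; your step (3) would stall.

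Once you replace $\S_0$ by $\Ipsi$, the paper carries out your steps (2)--(4) as follows. From $\S u = 0$ one shows $\K(u)=u$ (with $f=0$): for $u_n\in\Ckc$ one has $\Ri\Ie u_n = u_n$ by Fourier analysis, hence $\K(u_n)=u_n-\psif^{-1}\Ri\S u_n$, and a density argument in the Banach space $\Bs$ of Lemma~\ref{l:ext_to_B} passes to the limit. Your iteration is then Lemma~\ref{t:uniq_K}: the hypotheses (\ref{eq:est_uniq_infty})--(\ref{eq:est_uniq_zero}) say exactly $\Np{u}{e^{-t}}=O(\Awtm(t,0))$, and inequality (\ref{eq:est_kern_Ku_adj}) of Lemma~\ref{l:est_kern_Ku} gives $\KK^n(\Np{u}{\cdot})(t)\leq (C_Kc)^n A\,\Awtm(t,0)\to 0$, which is your bootstrap made precise via the abstract uniqueness Theorem~\ref{t:tvsp_fixptu}. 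So your outline is right; the missing ingredient is the weighted approximation $\Ipsi$ in place of $\Ie$.
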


\noindent It should be noted that the solution in Theorem~\ref{t:i:exist} satisfies the conditions in Theorem~\ref{t:i:uniq}; 
see Remark~\ref{r:soluniq} in Section~\ref{s:uniq}. 

In Section~\ref{s:assymp}, we prove a local version of Theorem~\ref{t:i:exist}.

\begin{theorem}
\label{t:i:local}
Let the constants\/~$\Ca$,\/~$\Cc$,\/~$\Nhi$, and\/~$\lcwm$ be as in Theorem~\ref{t:i:exist} and suppose that\/~$\lcwz \leq \lcwm$. 
Suppose also that\/~$u \in \Xp$, where\/~$1 < p < \infty$, satisfies~{\rm(}\ref{eq:est_uniq_zero}{\rm)} 
and\/~$\S u(x) = f(x)$ for\/~$|x| \leq 2r_0$, where\/~$r_0$ is a positive constant and\/~$f \in \Wloc$ satisfies
\begin{equation}
\label{eq:req_local}
\int_0^{2r_0} \Qw{\Nhi}{1}(\rho) \, \Np{\nabla f}{\rho} \, \frac{d\rho}{\rho} < \infty.
\end{equation}
Then\/~$\Np{u}{r}$ is bounded by
\begin{equation}
\label{eq:i:assymp}
\begin{aligned}
 & C \int_0^r \left({\frac{\rho}{r}}\right)^{\Nhi} \, \Np{\nabla f}{\rho} \, \frac{d\rho}{\rho}
  + C \int_r^{2r_0} \exp \biggl( \Ca \int_{r}^{\rho} \lcw(\nu) \, \frac{d\nu}{\nu} \biggr) \, \Np{\nabla f}{\rho}  \, \frac{d\rho}{\rho}\\
& \qquad + 
C \biggl( \| u \|_{\Xp} + \int_{r_0/2}^{2r_0} \Np{f}{\rho} \, \frac{d\rho}{\rho} \biggr)
\exp \biggl( \Ca \int_r^{r_0} \lcw(\nu) \, \frac{d\nu}{\nu}  \biggr),  
\end{aligned}
\end{equation}
for\/~$0 < r < r_0$, where\/~$C$ only depends on\/~$N$ and\/~$p$.
\end{theorem}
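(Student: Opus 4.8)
The plan is to deduce Theorem~\ref{t:i:local} from the global existence result Theorem~\ref{t:i:exist} and the uniqueness result Theorem~\ref{t:i:uniq} by a cut-off argument that localizes the data $f$ to the ball $|x| \leq 2r_0$. First I would fix a smooth cut-off function $\chi$ that equals $1$ on $|x| \leq r_0$ and vanishes for $|x| \geq 2r_0$, and consider the function $g = \chi f$. The difficulty is that $g$ need not lie in $\Yonew$ as defined, because the decay-at-infinity condition $\lim_{r\to\infty}\int_{S^{N-1}} g(r\theta)\,dS(\theta)=0$ is automatic here (since $g$ has compact support) but the seminorm integral $\int_0^\infty \Qw{\Nhi}{1}(\rho)\,\Np{\nabla g}{\rho}\,\frac{d\rho}{\rho}$ must be controlled: on $\rho \lesssim r_0$ the term $\nabla g = \chi \nabla f + f \nabla \chi$ is bounded using~(\ref{eq:req_local}) together with the extra piece $\int_{r_0/2}^{2r_0}\Np{f}{\rho}\,\frac{d\rho}{\rho}$ coming from the $f\nabla\chi$ contribution, while for $\rho \gtrsim r_0$ the gradient vanishes, so the integral is finite. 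Hence $g \in \Yonew$, and Theorem~\ref{t:i:exist} produces a solution $w \in \Xp$ of $\S w = g$ satisfying the estimate~(\ref{eq:i:est_sol}) with $f$ replaced by $g$.

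Next I would examine the difference $v = u - w$. On the ball $|x| \leq r_0$ we have $\S v = \S u - \S w = f - g = f - \chi f = 0$, but only there; globally $\S v = f - \chi f$, which is supported in $r_0 \leq |x| \leq 2r_0$ (away from where $\chi=1$) and in $|x|\geq 2r_0$ where $f$ itself may be large. To handle this I would instead argue locally: the key technical input, which should be available from the machinery developed earlier in the paper (the same estimates that prove Theorem~\ref{t:i:exist}), is an \emph{a priori} local estimate for functions annihilated by $\S$ on a ball. Concretely, if $\S v = 0$ on $|x| \leq 2r_0$, $v \in \Lloc[p]$ satisfies~(\ref{eq:defXp}) and the bound~(\ref{eq:est_uniq_zero}), then for $0 < r < r_0$ one expects
\[
\Np{v}{r} \leq C \Bigl( \|v\|_{\Xp} + \int_{r_0/2}^{2r_0}\Np{\S v}{\rho}\,\frac{d\rho}{\rho} \Bigr) \exp\Bigl(\Ca\int_r^{r_0}\lcw(\nu)\,\frac{d\nu}{\nu}\Bigr),
\]
the exponential factor reflecting the propagation of the estimate from scale $r_0$ down to scale $r$ through the iteration scheme governed by $\lcw$. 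Since here $\S v = \S u - g = f - g$, which vanishes on $|x|\leq r_0$ and is bounded on $r_0/2 \leq |x| \leq 2r_0$ by $\Np{f}{\rho} + \Np{g}{\rho} \lesssim \Np{f}{\rho}$, the integral term reduces to $\int_{r_0/2}^{2r_0}\Np{f}{\rho}\,\frac{d\rho}{\rho}$, giving exactly the third line of~(\ref{eq:i:assymp}).

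Finally I would combine the two pieces by the triangle inequality $\Np{u}{r} \leq \Np{w}{r} + \Np{v}{r}$. For $\Np{w}{r}$ I substitute the estimate~(\ref{eq:i:est_sol}) applied to $g$: the first integral $\int_0^r (\rho/r)^{\Nhi}\Np{\nabla g}{\rho}\,\frac{d\rho}{\rho}$ and the portion of the second integral over $\rho \leq 2r_0$ reproduce the first two terms of~(\ref{eq:i:assymp}) after replacing $\nabla g$ by $\nabla f$ on the support where $\chi = 1$ and absorbing the $f\nabla\chi$ terms (supported in $r_0/2 \leq |x| \leq 2r_0$) into the third term; the tail $\rho > 2r_0$ of the second integral vanishes since $\nabla g = 0$ there. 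The bound for $\Np{v}{r}$ supplies the third term directly. I expect the main obstacle to be establishing the local \emph{a priori} estimate for $v$ with the precise exponential weight — this is where the heart of the paper's analysis (the scale-by-scale iteration and the role of the smallness of $\lcwz$) must be invoked, and care is needed to ensure $v$ genuinely satisfies the hypotheses of that estimate, in particular that $v \in \Xp$ (which follows since both $u$ and $w$ are) and that $v$ inherits the behaviour~(\ref{eq:est_uniq_zero}) near the origin from $u$ and from the estimate on $w$.
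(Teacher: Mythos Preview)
Your proposal has a genuine gap that makes it circular. You reduce the problem to a ``local \emph{a priori} estimate'' for a function $v$ with $\S v = 0$ on a ball, of the form
\[
\Np{v}{r} \leq C\,\|v\|_{\Xp}\,\exp\Bigl(\Ca\int_r^{r_0}\lcw(\nu)\,\frac{d\nu}{\nu}\Bigr),
\]
and defer its proof to ``the machinery developed earlier in the paper.'' But this estimate \emph{is} Theorem~\ref{t:i:local} in the special case $f=0$; nothing proved before Section~\ref{s:assymp} yields it. The fixed-point apparatus of Section~\ref{s:red_fixpt} is global: it needs a right-hand side in $\Yonew$ and produces a global solution with a global estimate. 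It does not give interior bounds for a function that merely satisfies $\S v = 0$ on a ball while $\S v$ is uncontrolled outside. Your decomposition $u = w + v$ therefore shifts all of the content into the $v$-piece and leaves it unproved.

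The paper avoids this by cutting off $u$ rather than $f$. One multiplies $\S u = f$ by a cut-off $\eta$ and rewrites $\eta\,\S u = \S(\eta u) - [\S,\eta]u$, so that
\[
\S(\eta u + \Psi) = \eta f + [\S,\eta]u + \S\Psi
\]
holds \emph{globally}, where $\Psi$ is an explicit auxiliary function supported in $r_0 \leq |y| \leq 2r_0$. The commutator $[\S,\eta]u$ has a concrete kernel, and Lemma~\ref{l:est_Np_comm} shows that $\Np{\nabla([\S,\eta]u + \S\Psi)}{r} \leq C(r+\lcw(r))\|u\|_{\Xp}$ for $r \leq r_0$ and $\leq C r^{-N}\|u\|_{\Xp}$ for $r > r_0$. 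The role of $\Psi$ is essential: it is chosen to cancel the leading constant term of $\nabla[\S,\eta]u$ near the origin (see~\eqref{eq:comcomp_kill}); without it the gradient is only $O(1)$ there, and feeding that into~\eqref{eq:i:est_sol} produces an extra factor $\int_r^{r_0} D(r,\rho)\,\frac{d\rho}{\rho}$ which is not dominated by $\exp\bigl(\Ca\int_r^{r_0}\lcw\,\frac{d\nu}{\nu}\bigr)$ with a constant independent of $r$ and $\lcwz$. With the right-hand side thus placed in $\Yonew$, Theorem~\ref{t:i:exist} supplies a global solution $w$, and Theorem~\ref{t:i:uniq} forces $w = \eta u + \Psi$. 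Since $\Psi$ vanishes for $|y| < r_0$, one gets $\Np{u}{r} = \Np{w}{r}$ for $r < r_0$, and~\eqref{eq:i:assymp} follows by substituting the commutator bounds into~\eqref{eq:i:est_sol}.
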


\noindent An application of this theorem can be found in Section~\ref{s:est_Np_ralpha}, where we show 
that if~$\Np{\nabla f}{r} \leq C r^{-\alpha}$ for small~$r$ and some~$\alpha \in (0,\Nhi)$ 
and~$\lcw(r) \rightarrow 0$ as~$r \rightarrow 0$, then~$\Np{u}{r} \leq C r^{-\alpha}$ (for small~$r$). 
We also show that if~$\lcw$ and~$\Np{\nabla f}{\, \cdot \, }$ satisfy Dini-type 
conditions at $0$, then~$\Np{u}{r} \leq C$ for small~$r$.

The existence and uniqueness of solutions to equation~(\ref{eq:maineq}) follows by applying
a fixed point theorem for locally convex spaces, which was developed by the authors in~\cite{thim2}.
To apply this theorem, we approximate the operator~$\S$ with a Riesz potential operator and control
the behaviour of the remainder.
The natural approach of approximating the surface~$S$ with the hyperplane~$x_{N+1} = 0$ does
not yield sufficiently strong estimates for our purposes. Instead, we use a weighted Riesz potential
to match the behaviour of~$\S u$ at the origin. The choice of weight is not obvious since we need
to estimate both the potential and its derivative.
The fact that the solution to the fixed point problem solves~(\ref{eq:maineq}) follows from results
derived earlier by the authors for the hyperplane case in~\cite{thim1}. A summary of
the hyperplane case can be found in Section~\ref{s:rieszpot}.


\section{Properties of the Single Layer Potential}
\subsection{Riesz Potentials on~\boldmath{$\R^N$}}
\label{s:prelim}
\label{s:rieszpot}
We start by recalling some properties of the potential~$\S u$ 
in the case when~$S$ is the hyperplane~$x_{N+1} = 0$.
These results were derived by the authors in~\cite{thim1}.
Equation~(\ref{eq:maineq}) reduces in this case to
\begin{equation}
\label{eq:maineq_riesz}
\Ie u(x) = \int_{\R^N} \frac{u(y)}{|x-y|^{N-1}} \,dy = f(x)\text{,} 
\quad x \in \R^N \text{,}
\end{equation}
where the operator $\Ie$ is the Riesz potential operator of order~1 defined 
for~$u$ in~$\Xpett$. The space~$\Xpett$ is the natural domain if~$\Ie u$ is 
interpreted as an absolutely convergent integral.
For solvability results in~$L^p$-spaces, we refer to Rubin~\cite{rubin} and references cited therein.
The following continuity properties for~$\Ie$ hold; see Theorem 1.3 in~\cite{thim1}.

\begin{theorem}
\label{t:cont_I1}
The operator\/~$\Ie$ maps\/~$\Xp[1]$ into\/~$\Wloc$ for\/~$1 < p < \infty$. Moreover,
there exist two constants\/~$C_1$ and\/~$C_2$, depending only on\/~$N$ and\/~$p$, such that
\begin{equation}
\label{eq:Np_est_I_1}
\Np{\Ie u}{r} \leq C_1r \int_{0}^{\infty} \Qw{N}{1}\left(\frac{\rho}{r}\right) 
	\, \Np{u}{\rho} \, \frac{d\rho}{\rho} \text{,} 
\end{equation}
and 
\begin{equation}
\label{eq:Np_est_diff_I_1}
\Np{\nabla \Ie u}{r} \leq C_2 \int_{0}^{\infty} \Qw{N}{0}\left(\frac{\rho}{r}\right) 
	\, \Np{u}{\rho} \, \frac{d\rho}{\rho} \text{,} 
\end{equation}
for every function\/~$u \in \Xp$ and every\/~$r > 0$.
In the first inequality,\/~$p=1$ is also allowed.
\end{theorem}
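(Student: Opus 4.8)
The plan is to prove both inequalities by decomposing the region of integration in~$y$ according to its position relative to the annulus~$r\le|x|<2r$, exploiting that the kernel~$|x-y|^{1-N}$ is positively homogeneous of degree~$1-N$ (so that~$\Ie$ behaves like a Riesz potential of order one) and that~$\bigl|\nabla_x|x-y|^{1-N}\bigr|\le(N-1)|x-y|^{-N}$ is of degree~$-N$ (so that~$\nabla\Ie$ behaves like an operator of order zero). Fix~$r>0$ and a point~$x$ with~$r\le|x|<2r$, and split~$\Ie u(x)$ into the integrals over~$\{|y|<r/2\}$, over~$\{r/2\le|y|\le4r\}$ and over~$\{|y|>4r\}$; all three converge absolutely because~$u\in\Xp$. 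On the first and third sets one has~$|x-y|\approx\max(|x|,|y|)$, so the kernel is non-singular there; the middle set carries the singularity.

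On the ``near'' set, decompose~$\{|y|<r/2\}$ into dyadic shells~$\{\rho\le|y|<2\rho\}$, on each of which~$|x-y|\approx r$, and use H\"older's inequality (an equality for~$p=1$) in the form~$\int_{\rho\le|y|<2\rho}|u(y)|\,dy\le C\rho^{N}\Np{u}{\rho}$. Summing over the shells and replacing the dyadic sum by an integral gives the bound~$Cr\int_0^{r}(\rho/r)^{N}\Np{u}{\rho}\,\frac{d\rho}{\rho}$ for the near part of~$\Ie u$, where~$(\rho/r)^{N}=\Qw{N}{1}(\rho/r)$ since~$\rho\le r$; for~$\nabla\Ie$ the extra factor~$|x-y|^{-1}\approx r^{-1}$ removes the prefactor~$r$ and changes the weight to~$\Qw{N}{0}(\rho/r)$. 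The ``far'' set is handled identically with shells~$\{\rho\le|y|<2\rho\}$,~$\rho\ge4r$, where~$|x-y|\approx\rho$; this produces~$Cr\int_r^{\infty}(\rho/r)\Np{u}{\rho}\,\frac{d\rho}{\rho}$ for~$\Ie u$ and~$C\int_r^{\infty}\Np{u}{\rho}\,\frac{d\rho}{\rho}$ for~$\nabla\Ie u$, which match~$\Qw{N}{1}(\rho/r)=\rho/r$ and~$\Qw{N}{0}(\rho/r)=1$ for~$\rho>r$. Since these estimates are uniform in~$x$ over the annulus, they pass to bounds for~$\Np{\cdot}{r}$.

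For the ``local'' set, put~$u_r=u\,\mathbf{1}_{\{r/2\le|y|\le4r\}}$; the corresponding part of~$\Ie u(x)$ equals~$\bigl(|z|^{1-N}*u_r\bigr)(x)$, and for~$x$ in the annulus this is unchanged if the kernel is truncated to~$\{|z|\le8r\}$. Since~$\||z|^{1-N}\mathbf{1}_{\{|z|\le8r\}}\|_{L^{1}}\approx r$, Young's inequality bounds the~$L^{p}$-norm of this part by~$Cr\|u_r\|_{L^{p}}$ for every~$1\le p<\infty$; for the gradient, note that~$\Ie$ is a constant multiple of~$(-\Delta)^{-1/2}$, so~$\nabla\Ie u_r$ is, up to a constant, the Riesz transform of~$u_r$, and the Calder\'on--Zygmund theorem gives~$\|\nabla\Ie u_r\|_{L^{p}}\le C\|u_r\|_{L^{p}}$ for~$1<p<\infty$ — this is precisely where~$p>1$ enters~(\ref{eq:Np_est_diff_I_1}). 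Dividing by~$r^{N/p}$ to pass from~$L^{p}$ of the annulus to~$\Np{\cdot}{r}$ and bounding~$\|u_r\|_{L^{p}}$ by the~$\Npw$-seminorms on the shells with radius near~$r$, the local contributions are dominated by the~$r/4\le\rho\le8r$ portions of the integrals on the right of~(\ref{eq:Np_est_I_1}) and~(\ref{eq:Np_est_diff_I_1}), on which~$\Qw{N}{1}(\rho/r)$ and~$\Qw{N}{0}(\rho/r)$ are bounded above and below by absolute constants. Summing the three contributions yields the two inequalities; finally, for~$u\in\Xp$ the right-hand sides are finite for each fixed~$r$ (the ratio~$\Qw{N}{1}(\rho/r)/\Qw{N}{1}(\rho)$ is bounded in~$\rho$ for fixed~$r$), so~$\Ie u$ and~$\nabla\Ie u$ belong to~$L^{p}$ on each annulus and hence~$\Ie u\in\Wloc$.

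I expect the main obstacle to be the local piece for the gradient: identifying~$\nabla\Ie u_r$ with a Calder\'on--Zygmund operator applied to~$u_r$ and invoking its~$L^{p}$-boundedness (which fails at~$p=1$, explaining why that endpoint is excluded for the gradient estimate), while keeping the~$r$-scaling consistent across the three regions. A secondary, routine technical point is the comparison between dyadic sums and local~$L^{p}$-averages of~$u$ on the one hand and the continuous integrals~$\int_0^{\infty}\cdots\frac{d\rho}{\rho}$ on the other, which uses that~$\Np{u}{\rho}$ varies slowly in~$\rho$ on a logarithmic scale because neighbouring shells overlap.
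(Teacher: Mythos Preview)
Your proposal is correct. The paper does not actually prove this theorem: it is quoted from the authors' earlier work~\cite{thim1} (Theorem~1.3 there), so there is no in-paper proof to compare against. The strategy you outline---splitting the $y$-integration into a near region $\{|y|<r/2\}$, a far region $\{|y|>4r\}$, and a local region $\{r/2\le|y|\le 4r\}$, handling the first two by the pointwise kernel bounds $|x-y|\approx\max(|x|,|y|)$ together with H\"older on dyadic shells, and handling the local piece via Young's inequality for $\Ie$ and the $L^p$-boundedness of the Riesz transforms for $\nabla\Ie$---is the standard approach to estimates of this type and is essentially what one finds in~\cite{thim1}.

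One remark on the step you flag as a ``secondary, routine technical point'': the inequality $\Np{u}{r}\le C\int_{r/2}^{2r}\Np{u}{\rho}\,d\rho/\rho$ needed to absorb the local contribution into the right-hand side is true but not quite for the reason you give. The function $\rho\mapsto\Np{u}{\rho}$ need not vary slowly in any uniform sense. What does hold is that, writing $F(t)=\int_{r\le|y|<t}|u|^p\,dy$ for $t\in[r,2r]$, one has
\[
\int_{r/2}^{2r}\Np{u}{\rho}\,\frac{d\rho}{\rho}
\;\ge\; c\int_{r}^{2r}\Bigl(F(t)^{1/p}+(F(2r)-F(t))^{1/p}\Bigr)\,\frac{dt}{t}
\;\ge\; c\,F(2r)^{1/p}\log 2
\;=\; c'\,\Np{u}{r},
\]
the last inequality using that $a^{1/p}+(G-a)^{1/p}\ge G^{1/p}$ for $0\le a\le G$ by concavity. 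This is the precise form of the overlapping-annuli argument you allude to.
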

\noindent A solution to~(\ref{eq:maineq_riesz}) is given by
\begin{equation*}
\Ri f(x) = \frac{c_N}{N-1} \sum_{k=1}^{N} R_k \partial_k f(x) \text{,}
\quad x \in \R^N \text{,} 
\end{equation*}
where~$c_N = \Gamma\left( (N+1)/2 \right)\pi^{-({N+1})/{2}}$,~$\Gamma$ is the gamma function, and~$R_k$ is the~$k$th Riesz 
transform (cf.~Stein~\cite[p. 57]{stein}). 
For~$1 < p < \infty$, the space~$\Yone$ consists of all functions~$f$ in~$\Wloc$
such that 
\begin{equation}
\label{eq:y1pdef}
\| f \|_{\Yone} = \biggl| \int_{1 \leq |x| < 2} f(x) \, dx \biggr|
+ \int_{0}^{\infty} \Qw{N}{0}(\rho) \, \Np{\nabla f}{\rho} \, \frac{d\rho}{\rho} < \infty
\text{,}
\end{equation}
where this expression is the norm on~$\Yone$. The next theorem shows that 
the operator~$\Ri$ maps this space continuously into~$\Lloc[p]$; see Theorem~1.5 in~\cite{thim1}.

\begin{theorem}
\label{t:cont_R}
The operator\/~$\Ri$ is defined on\/~$\Yone$ for\/~$1 < p < \infty$, and there exists a constant\/~$C$, depending only on\/~$N$ and\/~$p$, 
such that
\begin{equation}
\label{eq:NpR}
\Np{\Ri f}{r} \leq
C \int_{0}^{\infty} \Qw{N}{0}\left( \frac{\rho}{r} \right) \, \Np{\nabla f}{\rho}
\, \frac{d\rho}{\rho}, \quad r > 0,
\end{equation}
for every function\/~$f \in \Yone$.
\end{theorem}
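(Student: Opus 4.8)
The operator $\Ri$ acts through the Riesz transforms, $\Ri f=\frac{c_N}{N-1}\sum_{k=1}^{N}R_k\partial_kf$, so the plan is to estimate $\Np{R_kg}{r}$ for $g=\partial_kf$ by a constant multiple of $\int_0^\infty\Qw{N}{0}(\rho/r)\,\Np{g}{\rho}\,\frac{d\rho}{\rho}$ and then sum over $k$, using $|\partial_kf|\le|\nabla f|$ (hence $\Np{\partial_kf}{\rho}\le\Np{\nabla f}{\rho}$ for every $\rho$) at the end. Since $g$ belongs only to $\Lloc[p]$ globally, $R_kg$ is first given a meaning by decomposing $g$ into its restrictions to the dyadic annuli $\{2^j\le|y|<2^{j+1}\}$, applying $R_k$ (bounded on $L^p(\R^N)$ for $1<p<\infty$) to each piece, and summing; the seminorm bound below shows the series converges in $\Lloc[p]$ as soon as the right-hand side of~\eqref{eq:NpR} is finite, which holds for $f\in\Yone$ by~\eqref{eq:y1pdef}. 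In particular this establishes that $\Ri$ is well defined on $\Yone$.

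To estimate $\Np{R_kg}{r}$, fix $r>0$ and split $g=g_<+g_\sim+g_>$, the restrictions of $g$ to $\{|y|<r/2\}$, $\{r/2\le|y|<4r\}$, and $\{|y|\ge4r\}$, treating each piece on the target shell $\{r\le|x|<2r\}$ by a different mechanism. For $g_\sim$, one has $g_\sim\in L^p(\R^N)$ since $f\in\Wloc$, so $L^p$-boundedness of $R_k$ gives $\Np{R_kg_\sim}{r}\le r^{-N/p}\|R_kg_\sim\|_{L^p(\R^N)}\le C\,r^{-N/p}\|g_\sim\|_{L^p(\R^N)}$, and splitting $g_\sim$ over the finitely many dyadic shells between $r/2$ and $4r$ bounds this by $C\sum_{|j|\le2}\Np{g}{2^jr}$. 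For $g_>$ and $g_<$ the points $x$ and $y$ are separated, so only the crude kernel bound $|K_k(x-y)|\le C|x-y|^{-N}$ is needed: on a shell $\{2^jr\le|y|<2^{j+1}r\}$ with $j\ge2$ one has $|x-y|\ge|y|/2$, hence $|K_k(x-y)|\le C(2^jr)^{-N}$, while for $|y|<r/2$ one has $|x-y|\ge r/2$; combining with the H\"older estimate $\|g\|_{L^1(\{s\le|y|<2s\})}\le Cs^N\Np{g}{s}$ on each dyadic shell yields $|R_kg_>(x)|\le C\sum_{j\ge2}\Np{g}{2^jr}$ and $|R_kg_<(x)|\le C\sum_{j\le-2}2^{jN}\Np{g}{2^jr}$ uniformly for $x$ in the shell $\{r\le|x|<2r\}$.

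It then remains to add the three contributions and compare these dyadic sums with the integral in~\eqref{eq:NpR}. Since $\Qw{N}{0}(t)=t^N$ for $t\le1$ and $\Qw{N}{0}(t)=1$ for $t>1$, the weight is comparable to $1$ for $\rho$ near $r$ (absorbing the first two sums, with the $j\ge2$ sum producing the piece $\int_r^\infty\Np{g}{\rho}\,\frac{d\rho}{\rho}$) and equals $(\rho/r)^N$ for $\rho<r$, matching the factor $2^{jN}$ in the inner sum. The passage from a dyadic sum to such a weighted integral is an elementary covering step: one checks $\Np{g}{s}\le C\int_{s/2}^{2s}\Np{g}{\rho}\,\frac{d\rho}{\rho}$ for every $s>0$ — because an annulus $\{\rho\le|y|<2\rho\}$ contains at least half of $\{s\le|y|<2s\}$ for a whole dyadic band of $\rho\in(s/2,2s)$, so $\Np{g}{\rho}\ge c\,\Np{g}{s}$ there — and then sums against the doubling weight $\Qw{N}{0}(\,\cdot\,/r)$; this type of lemma is already available from~\cite{thim1}. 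The resulting constant depends only on $N$ and $p$, through the $L^p$-operator norm of $R_k$, the kernel constant $c_N$, and the covering constants. There is no single hard step: the care lies in organizing the three regimes so that the kernel bound $|K_k|\le Cr^{-N}$ together with one application of H\"older over the relevant inner shells reproduces exactly the gain $(\rho/r)^N$ prescribed by $\Qw{N}{0}$ (no cancellation of $K_k$ is used in the outer or inner regimes), and in the discretization lemma; the restriction $1<p<\infty$ is used only through the $L^p$-boundedness of $R_k$ on the middle piece.
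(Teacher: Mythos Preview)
The paper does not give its own proof of this statement; it simply cites Theorem~1.5 in~\cite{thim1}. Your sketch is correct and is precisely the natural argument---indeed the paper invokes the same mechanism (``analogously with Lemma~3.1 in~\cite{thim1}'') to obtain the parallel estimate~(\ref{eq:est_Tk}) for the operators~$T_k$: split the source into a near piece (handled by~$L^p$-boundedness of the Calder\'on--Zygmund operator) and inner/outer far pieces (handled by the pointwise size bound $|K_k(x-y)|\le C|x-y|^{-N}$ and H\"older on dyadic shells), then pass from the dyadic sum to the integral against $\Qw{N}{0}(\rho/r)$.

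Two small remarks on presentation. First, your justification of the covering step $\Np{g}{s}\le C\int_{s/2}^{2s}\Np{g}{\rho}\,\frac{d\rho}{\rho}$ is slightly imprecise as worded: it is not true that $\Np{g}{\rho}\ge c\,\Np{g}{s}$ for \emph{all} $\rho\in(s/2,2s)$. The clean argument is to note that for $\rho\in[3s/4,s]$ the annulus $\{\rho\le|y|<2\rho\}$ contains $\{s\le|y|<3s/2\}$, while for $\rho\in[s,3s/2]$ it contains $\{3s/2\le|y|<2s\}$; integrating over these two subintervals recovers the two halves of $\Np{g}{s}$. Second, for the well-definedness you should observe that finiteness of the right-hand side of~(\ref{eq:NpR}) at $r=1$---which is exactly the integral condition in~(\ref{eq:y1pdef})---implies finiteness for every $r>0$ via $\Qw{N}{0}(\rho/r)\le\max\{1,r^{-N}\}\,\Qw{N}{0}(\rho)$.
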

\noindent We define\/~$\Yonez$ as the proper subspace
of\/~$\Yone$ consisting of those functions\/~$f$ that satisfy
\begin{equation}
\label{eq:y1p0ineq}
\int_{0}^{1} \rho^N(1-\log \rho) \,  \Np{\nabla f}{\rho} \, \frac{d\rho}{\rho}
+ \int_{1}^{\infty} \Np{\nabla f}{\rho} \, d\rho < \infty 
\end{equation}
and $\lim_{r \rightarrow \infty} \int_{S^{N-1}} f(r\theta) \, d\theta = 0$.
The expression in~{\rm(}\ref{eq:y1p0ineq}{\rm)} is taken as the 
norm on\/~$\Yonez$. We have
the following solvability result; see Theorem 1.7 and 1.1 in~\cite{thim1}.

\begin{theorem}
\label{t:inv}
\label{t:injectivity}
\mbox{}
\begin{enumerate}
\item[{\rm(i)}]
The operator\/~$\Ri$ is bounded from\/~$\Yonez$ to\/~$\Xp[1]$ for\/~$1 < p < \infty$. 
\item[{\rm(ii)}]
If\/~$f \in \Yonez$, then there exists a solution\/~$u \in \Xp$ to~{\rm(}\ref{eq:maineq_riesz}{\rm)}
which satisfies
\[
\Np{u}{r} \leq
C \int_{0}^{\infty} \Qw{N}{0}\left( \frac{\rho}{r} \right) \, \Np{\nabla f}{\rho}
\, \frac{d\rho}{\rho}, \quad r > 0.
\]
\item[{\rm(iii)}]
Suppose that\/~$u$ is a locally integrable function such that
\begin{equation}
\label{eq:cond_uniq_plane}
\int_{|y| < 1} |u(y)| \, dy + \int_{|y| \geq 1} \frac{|u(y)| \, dy}{|y|^{N-1}} < \infty \text{.}
\end{equation}
If\/~$\Ie u = 0$, then it follows that\/~$u = 0$.
\end{enumerate}
\end{theorem}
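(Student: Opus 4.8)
The three parts are essentially independent; for each, the plan is to combine one of the continuity statements already available (Theorems~\ref{t:cont_I1} and~\ref{t:cont_R}) with a single additional ingredient.

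\emph{Part (i).} By the definition~(\ref{eq:defXp}) of the norm on $\Xp$, it suffices to bound $\int_0^\infty \Qw{N}{1}(r)\,\Np{\Ri f}{r}\,\frac{dr}{r}$. I would insert the pointwise estimate~(\ref{eq:NpR}) for $\Np{\Ri f}{r}$ and interchange the order of integration (Tonelli, since all integrands are non-negative), which reduces the matter to estimating the one-dimensional kernel
\[
\KK(\rho) \;=\; \int_0^\infty \Qw{N}{1}(r)\,\Qw{N}{0}\!\left(\frac{\rho}{r}\right)\frac{dr}{r}.
\]
Splitting the $r$-integral at $r=\rho$ and $r=1$ and computing each piece elementarily gives $\KK(\rho)\le C\rho^{N}(1-\log\rho)$ for $0<\rho\le1$ and $\KK(\rho)\le C\rho$ for $\rho>1$. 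These are exactly the weights appearing in the $\Yonez$-norm~(\ref{eq:y1p0ineq}) (note $\int_1^\infty\Np{\nabla f}{\rho}\,d\rho=\int_1^\infty\rho\,\Np{\nabla f}{\rho}\,\frac{d\rho}{\rho}$), so
\[
\int_0^\infty \Qw{N}{1}(r)\,\Np{\Ri f}{r}\,\frac{dr}{r}
\;\le\; C\int_0^\infty \KK(\rho)\,\Np{\nabla f}{\rho}\,\frac{d\rho}{\rho}
\;\le\; C\,\|f\|_{\Yonez},
\]
which proves (i).

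\emph{Part (ii).} The estimate claimed for $u$ is exactly~(\ref{eq:NpR}) applied to $u=\Ri f$, and it is available because $\Yonez\subset\Yone$, $\Ri$ is defined on $\Yone$ by Theorem~\ref{t:cont_R}, and part (i) already puts $u$ in $\Xp$. What is left is to check that $\Ie(\Ri f)=f$. For $f\in\Ck$ this is a Fourier-multiplier computation: $R_k$ has symbol $-i\xi_k/|\xi|$ and $\partial_k$ has symbol $i\xi_k$, so $\sum_kR_k\partial_k$ has symbol $|\xi|$; since $|x|^{-(N-1)}$ has Fourier transform a constant multiple of $|\xi|^{-1}$, the composition $\Ie\circ\Ri$ is a constant, and that constant is $1$ precisely because of the normalization $c_N/(N-1)$ in the definition of $\Ri$. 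To pass to a general $f\in\Yonez$, I would approximate $f$ in the $\Yonez$-norm by a sequence $f_j\in\Ck$ (a smooth cut-off followed by mollification; the mean-value condition $\lim_{r\to\infty}\int_{S^{N-1}}f(r\theta)\,d\theta=0$ together with a Hardy-type inequality controls the error produced by the cut-off). Then $\Ri f_j\to\Ri f$ in $\Xp$ by part (i), hence $\Ie(\Ri f_j)\to\Ie(\Ri f)$ in $\Wloc$ by Theorem~\ref{t:cont_I1}; since also $f_j\to f$ in $\Wloc$, the identities $\Ie(\Ri f_j)=f_j$ pass to the limit, giving $\Ie(\Ri f)=f$.

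\emph{Part (iii).} Condition~(\ref{eq:cond_uniq_plane}) implies $\int_{\R^N}|u(y)|(1+|y|)^{-(N-1)}\,dy<\infty$, so $u$ is a tempered distribution; the same condition legitimizes Fubini and, together with the symmetry of the kernel, gives $\langle\Ie u,\phi\rangle=\langle u,\Ie\phi\rangle$ for every Schwartz function $\phi$, hence $\langle u,\Ie\phi\rangle=0$ since $\Ie u=0$. Because $\Ie\phi$ has Fourier transform a constant multiple of $|\xi|^{-1}\widehat\phi$, one can choose $\phi$ so that this pairing becomes $\langle\widehat u,\psi\rangle$ for an arbitrary $\psi\in C_c^\infty(\R^N\setminus\{0\})$; thus $\widehat u$ vanishes off the origin, so it is supported at the origin and $u$ is a polynomial. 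A nonzero polynomial cannot satisfy $\int_{|y|\ge1}|u(y)|\,|y|^{-(N-1)}\,dy<\infty$, so $u\equiv0$. (Alternatively, in the spirit of potential theory: $V(x,t)=\int_{\R^N}u(y)\,|(x,t)-(y,0)|^{-(N-1)}\,dy$ is harmonic on $\R^{N+1}\setminus(\R^N\times\{0\})$, vanishes on the hyperplane $t=0$ by hypothesis, and extends by odd reflection in $t$ to a global harmonic function whose growth is controlled by~(\ref{eq:cond_uniq_plane}); a Liouville argument forces $V\equiv0$, and the jump of $\partial_tV$ across $t=0$ recovers $u$ up to a nonzero constant.)

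The kernel computation in (i) is routine. The two points needing real care are the approximation step in (ii)---$f$ lies only in $\Yonez$ and $\Ri f$ only in $\Xp$, so one cannot Fourier-transform directly and must establish density of $\Ck$ and the relevant continuities by hand---and, above all, the uniqueness in (iii): hypothesis~(\ref{eq:cond_uniq_plane}) is the weakest under which $\Ie u$ even makes sense, so one must extract both temperedness of $u$ and the growth restriction forcing $u\equiv0$ directly from it, without the comfort of a global function-space norm. That is where I expect the main difficulty to lie.
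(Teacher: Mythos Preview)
The paper does not actually prove this theorem; it is stated in Section~\ref{s:rieszpot} as a summary of results from the authors' earlier article~\cite{thim1} (Theorems~1.1 and~1.7 there), so there is no in-paper proof to compare your proposal against.

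That said, your outline is sound and is essentially how one would reconstruct the proof. The kernel computation in~(i) is correct (the split at $r=\rho$ and $r=1$ gives exactly $\rho^N(1-\log\rho)$ for $\rho\le 1$ and $C\rho$ for $\rho>1$). For~(ii), the Fourier-multiplier calculation on $\Ck$ plus a density argument is the standard route; the normalization $c_N/(N-1)$ is precisely the reciprocal of the Fourier constant of $|x|^{-(N-1)}$, so the composition is the identity. For~(iii), your tempered-distribution argument is correct: condition~(\ref{eq:cond_uniq_plane}) gives $u(1+|y|)^{-(N-1)}\in L^1$, hence $u\in\mathcal{S}'$, and the support-at-the-origin/polynomial conclusion is clean. (The alternative harmonic-extension/Liouville argument you sketch is in fact closer in spirit to how~\cite{thim1} treats it.)

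The one genuinely delicate point is the one you already flag: density of $\Ckc$ in $\Yonez$ for the approximation in~(ii). The $\Yonez$-norm sees only $\nabla f$, so cutting off at scale $R$ produces an error $f\nabla\eta_R$ whose seminorm is $\sim R^{-1}\Np{f}{R}$; to show this vanishes as $R\to\infty$ you need a Hardy-type control of $\Np{f}{\rho}$ in terms of $\int_\rho^\infty\Np{\nabla f}{s}\,ds$ together with the mean-value condition at infinity. This is where the work in~\cite{thim1} lies, and your proposal correctly identifies it as the main technical step.
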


\noindent It can be verified that the condition 
in~(\ref{eq:cond_uniq_plane}) coincides with the definition of~$\Xpett$; see~\cite{thim1}.


\subsection{Approximation of~\boldmath{$\S$}}
\label{s:appr_S}
For~$k = 1,2,\ldots,N+1$, 
let~$T_k$ be the singular integral operator defined by 
\[
T_{k} u(x) = \pval \int_{\R^N} 
\frac{(\ld[x] - \ld[y])_k}{|\ld[x] - \ld[y]|^{N+1}} \, u(y) \sqrt{1 + |\nabla \vp(y)|^2} \, dy
\text{,} \quad x \in \R^N \text{,}
\]
where~$u \in L^p(\R^N)$ with~$1 < p < \infty$. These operators are bounded on~$L^p(\R^N)$ 
for~$1 < p < \infty$; see for instance Dahlberg~\cite{dahlberg}. 
Moreover, analogously with Lemma~3.1 in the authors' article~\cite{thim1}, one can
show that if~$u \in \Lloc[p]$ satisfies
\begin{equation}
\label{eq:defTk}
\int_0^{\infty} \Qw{N}{0}(\rho) \, \Np{u}{\rho} \, \frac{d\rho}{\rho} < \infty,
\end{equation}
then~$T_k u$ is defined almost everywhere and
\begin{equation}
\label{eq:est_Tk}
\Np{T_k u}{r} \leq C \int_0^{\infty} \Qw{N}{0} \left( \frac{\rho}{r} \right) \, \Np{u}{\rho} \, \frac{d\rho}{\rho}, \quad r > 0,
\end{equation}
for~$k=1,2,\ldots,N+1$, where~$C$ only depends on~$N$ and~$p$.

Using Stokes' theorem, it is straightforward to show that~$\S u$ is weakly 
differentiable if~$u \in \Ck$, and that
\begin{equation}
\label{eq:diff_Su}
\partial_k \S u(x) = (1-N)\bigl( T_k u(x) + \partial_k \vp (x)T_{N+1} u(x) \bigr)
\text{,} \quad x \in \R^N \text{, } 
\end{equation}
for~$k =1,2,\ldots,N$.
Furthermore, one can show that~$\Ckc$ is dense in the Banach space defined by~{\rm(}\ref{eq:defTk}{\rm)},
so inequality~{\rm(}\ref{eq:est_Tk}{\rm)} and~(\ref{eq:diff_Su}) imply that~$\partial_k \S u$
is defined for~$u \in \Xp$ and given by~{\rm(}\ref{eq:diff_Su}{\rm)}.


\noindent To simplify the notation, let~$\dSyw(y) = \dSy$ for~$y \in \R^N$.
We wish to approximate~$\S u$ with a Riesz potential.  Put
\[
\psif(y) = 
\frac{|y|^{N+1}\dSyw(y)}{(|y|^2 + \vp(y)^2)^{(N+1)/2}},
\quad y \in \R^N \setminus \{ 0 \}.
\]
We define the operators~$\Ipsi$ and~$\Rkpsi$ by
\[
\Ipsi u = \Ie(\psif u), \quad u \in \Xp,
\]
and
\[
\Rkpsi u = R_k(\psif u), \quad u\text{ satisfying~(\ref{eq:defTk})}.
\]

For smooth~$u$, it follows from 
the fact that~$\pks{x} \Ipsi u = (1-N) c_N^{-1} \Rkpsi u$ (see Stein~\cite[p.~126]{stein}) 
and~(\ref{eq:diff_Su}) 
that
\begin{equation}
\label{eq:diff_S_I_1u}
\begin{aligned}
\partial_k (\Ipsi - \S )u &= (N-1) \bigl(
T_k u + \partial_k \vp T_{N+1} u - c_N^{-1} \Rkpsi u \bigr) \\
	&=  (N-1) \bigl( 
	( T_k u - c_N^{-1} \Rkpsi u )
	+ \partial_k \vp T_{N+1} u \bigr) \text{.}
\end{aligned}
\end{equation}
Equation~(\ref{eq:diff_S_I_1u}) also holds for~$u \in \Xp$. Indeed,~(\ref{eq:est_Tk}) and the corresponding
estimate for~$\Rkpsi$ imply that~(\ref{eq:diff_S_I_1u}) remains valid since~$C^{\infty}_c(\R^N)$ is dense 
in the involved spaces. 
\begin{lemma}
\label{l:S_I_bound_L2}
Let~$r > 0$ and define~$B_r = B(0 \, ; \, 2r)$.
There exists a constant~$C$, depending only on~$N$, such that
for every~$u \in L^2(B_r)$, that is supported in~$B_r$, 
\begin{enumerate}
\item[{\rm (i)}] $\| (T_{k} - c_N^{-1} \Rkpsi) u \|_{L^2(B_r)} 
	\leq C  \lc[r]^2 \, \| u \|_{L^2(B_r)}${\rm;}
\item[{\rm (ii)}] $\| T_{N+1} u \|_{L^2(B_r)} 
	\leq C  \lc[r] \, \| u \|_{L^2(B_r)}${\rm;}
\end{enumerate}
\end{lemma}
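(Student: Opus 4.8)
\noindent The plan is to reduce to a global $L^2$-bound at unit scale and then expand the operator kernels in powers of $\varphi$, controlling each term by a Calder\'on-commutator estimate. First I would fix $r>0$, write $L=\lc[r]$, and localize: since $u$ is supported in $B_r=B(0;2r)$ and only the output on $B_r$ is measured, the kernels of $T_k$, $T_{N+1}$ and $\Rkpsi$ on $B_r\times B_r$ depend only on $\varphi|_{B_r}$, so after replacing $\varphi$ outside $B_r$ by an $L$-Lipschitz extension of $\varphi|_{B_r}$ (and normalizing $\varphi(0)=0$) I may assume $\varphi$ is globally $L$-Lipschitz; then $|\varphi(x)-\varphi(y)|\le L|x-y|$ and $|\varphi(y)|\le L|y|$ for all $x,y$. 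The anisotropic dilation $y\mapsto ry$, $\varphi\mapsto\varphi(r\,\cdot\,)/r$ multiplies each of these kernels by $r^{-N}$, leaves the form of $\omega$ and $\psi$ unchanged, and turns $\|w\|_{L^2(B_r)}$ into $r^{N/2}\|w(r\,\cdot\,)\|_{L^2(B(0;2))}$; hence it suffices to prove, for every globally $L$-Lipschitz $\varphi$ with $\varphi(0)=0$ and $L$ small and every $w\in L^2(\R^N)$,
\[
\|T_{N+1}w\|_{L^2(\R^N)}\le CL\,\|w\|_{L^2(\R^N)},\qquad
\|(T_k-c_N^{-1}\Rkpsi)w\|_{L^2(\R^N)}\le CL^2\,\|w\|_{L^2(\R^N)}.
\]

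Next I would expand the kernels. Writing $\Delta=\varphi(x)-\varphi(y)$, both $(\Delta/|x-y|)^2$ and $(\varphi(y)/|y|)^2$ lie in $[0,L^2]$, and from $(1+t)^{-(N+1)/2}=\sum_{m\ge0}b_m t^m$ --- a series with $\sum_m|b_m|L^{2m}<\infty$ for $L$ small --- I get the norm-convergent expansions
\[
\frac{\Delta}{|\Phi(x)-\Phi(y)|^{N+1}}=\sum_{m\ge0}b_m\frac{\Delta^{2m+1}}{|x-y|^{N+1+2m}},\qquad
\psi(y)=\omega(y)\sum_{m\ge0}b_m\frac{\varphi(y)^{2m}}{|y|^{2m}},
\]
together with $\omega(y)=1+\tfrac12|\nabla\varphi(y)|^2+\cdots$ (no term linear in $\varphi$) and the matching series for $(x-y)_k|\Phi(x)-\Phi(y)|^{-(N+1)}$. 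Each building block is then either a bounded multiplier --- the factors coming from $\omega$ have norm $\le C$, and $\varphi(y)^2/|y|^2$ has norm $\le L^2$ --- or a singular integral operator with kernel $\Delta^{j}(x-y)_k|x-y|^{-(N+j+1)}$ or $\Delta^{j}|x-y|^{-(N+j)}$; the latter are the $j$th Calder\'on commutators of $\varphi$, and by the Coifman--McIntosh--Meyer theorem their $L^2(\R^N)$ operator norms are at most $C^{j}\|\nabla\varphi\|_\infty^{j}\le(CL)^{j}$.

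Then I would assemble the pieces. For $T_{N+1}$ the $m=0$ term of the kernel is $\omega(y)\,\Delta\,|x-y|^{-(N+1)}$, i.e.\ the first commutator applied to $\omega w$, of norm $\le CL$, while every remaining term carries a factor $\Delta^{2m+1}$ with $m\ge1$ and so contributes $\le\sum_{m\ge1}|b_m|(CL)^{2m+1}\le CL^{3}$; this yields (ii). (Equivalently, from $e_{N+1}=\omega(y)\nu(y)+(\nabla\varphi(y),0)$ with $\nu$ the unit normal, $T_{N+1}w$ is a double-layer operator applied to $\omega w$ plus $\sum_k T_k\bigl((\partial_k\varphi)w\bigr)$, both of norm $O(L)$.) For $T_k-c_N^{-1}\Rkpsi$ the two kernels equal $(x-y)_k|x-y|^{-(N+1)}$ times $\omega(y)(1+\Delta^2/|x-y|^2)^{-(N+1)/2}$ and $\psi(y)=\omega(y)(1+\varphi(y)^2/|y|^2)^{-(N+1)/2}$ respectively; their constant and (vanishing) linear parts agree, the quadratic contribution of the $\omega$ factor cancels in the difference, and what survives at the quadratic level is a constant multiple of the second commutator (kernel $(x-y)_k\Delta^2|x-y|^{-(N+3)}$) minus the operator $w\mapsto c_N^{-1}R_k\bigl((\varphi^2/|\cdot|^2)w\bigr)$, both of norm $\le CL^2$, plus cubic-and-higher terms contributing $\le CL^{3}$; this yields (i). Undoing the dilation and the localization restores the factors $\lc[r]$ and $\lc[r]^2$.

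The hard part is that the gain of a power of $L$ cannot be read off the kernel size: already the kernel of $T_{N+1}$ is only bounded by $CL|x-y|^{-N}$, which is non-integrable over a ball, so everything rests on the singular-integral cancellation packaged in the Calder\'on commutators --- and the point of the precise weight $\psi$ is exactly to make the zeroth- and quadratic-order parts of $T_k$ and $\Rkpsi$ match, so that the difference is $O(L^2)$ rather than $O(L)$. What remains is bookkeeping: the norm-convergence of the $\varphi$-expansion (the one place smallness of $L$ is used), that each term carries the advertised power of $L$, and that $\omega$ and $\varphi(y)^2/|y|^2$ are bounded multipliers with the stated norms.
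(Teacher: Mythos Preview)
Your argument is correct and hits the same essential points as the paper --- the McShane-type extension to make $\varphi$ globally $L$-Lipschitz with $L=\Lambda(r)$, and the appeal to Coifman--McIntosh--Meyer to control the singular integrals that remain. The packaging differs: the paper does not expand term by term into Calder\'on commutators. Instead it writes the kernel of $T_k-c_N^{-1}\Rkpsi$ (after peeling off a pointwise multiplier $(1-\psifd)\omega$ bounded by $CL^2$) as
\[
\frac{x_k-y_k}{|x-y|^{N+1}}\,F_1\!\left(\frac{\varphi(x)-\varphi(y)}{|x-y|}\right),
\qquad F_1(z)=\frac{(1+z^2)^{(N+1)/2}-1}{(1+z^2)^{(N+1)/2}},
\]
and similarly $T_{N+1}$ with $F_2(z)=z(1+z^2)^{-(N+1)/2}$. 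It then reduces to one dimension by the method of rotations and invokes the Cauchy-integral bound on Lipschitz curves, with operator norm $\le C\sup_{z\in\Gamma}|F_i(z)|$ over a small rectangular contour $\Gamma$ of size $\sim L$; the powers $L^2$ and $L$ come directly from $\sup_\Gamma|F_1|\le CL^2$ and $\sup_\Gamma|F_2|\le CL$. Your power-series expansion is just the Taylor development of $F_1$ and $F_2$, so the two routes are equivalent; the paper's version trades the bookkeeping of summing commutator bounds for a single contour estimate, while yours makes the cancellation structure (matching the zeroth-order terms via the weight $\psi$) more transparent. Your dilation to unit scale is harmless but unnecessary, since the operator norms are already scale-invariant.
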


\begin{proof}
Put~$\psifd(y) = \psif(y)\,\dSyw(y)^{-1}$ for~$y \in \R^N \setminus \{ 0 \}$.
Define the operator~$T$ by
\[
Tu = T_k u - c_N^{-1} R_k ( \psifd u )
\]
for~$u \in L^2(B_r)$.
Let~$F_1$ be the function
\[
F_1(z) = \frac{(1+z^2)^{(N+1)/2}-1}{(1+z^2)^{(N+1)/2}},
\quad z \in \C \setminus \{ \pm i \}
\text{.}
\]
Then~$F_1$ is analytic in the band~$\{ \, z \in \C \sa |\mathrm{Im} \, z| < 1 \, \}$ and 
\[
\begin{aligned}
T_k u(x) - c_N^{-1} \Rkpsi u(x) = {} & 
	T u(x) + 
	  \int_{\R^N} \frac{x_k - y_k}{|x-y|^{N+1}} \left( 1 - \psifd(y) \right) u(y) \, \dSyw(y) \, dy\\
	= {} &
	-\int_{\R^N} 
	\frac{x_k - y_k}{|x-y|^{N+1}} F_1\biggl(\frac{\vp(x) - \vp(y)}{|x-y|} \biggr) u(y) \, \dSyw(y) \, dy\\
	& + \int_{\R^N} \frac{x_k - y_k}{|x-y|^{N+1}} \left( 1 - \psifd(y) \right) \dSyw(y) \, dy
\end{aligned}
\]
for~$1 \leq k \leq N$.
First, we consider~$T u$.
According to McShane's extension theorem (see~\cite{mcshane}), we may assume
that~$\vp$ is Lipschitz on~$\R^N$ with Lipschitz constant~$L = \lc[r]$
since we only need to consider~$T u (x)$ for~$x \in B_r$ and~$u$ has its support in~$B_r$ as well.
Let~$\gamma > 0$ and define
\begin{equation}
\label{eq:def_gamma}
A = \{ \, z \in \C \sa |\mathrm{Re} \, z| \leq L(1+\gamma) \text{, }  |\mathrm{Im} \, z| \leq L\gamma \, \} \text{.}
\end{equation}
We let~$\Gamma$ denote the rectangular boundary of the set~$A$ in~(\ref{eq:def_gamma}) and
assume that~$L = \lc[r]$ is sufficiently small, e.g.,~$L(1 + 2\gamma) \leq 1/2$, so that~$F_1$ is analytic in a
neighbourhood of the set in~(\ref{eq:def_gamma}). 

If~$\phi \sa \R \rightarrow \R$ is Lipschitz with Lipschitz-constant~$L$ 
and~$K$ is 
the Calder\'on-Zygmund kernel (see Stein~\cite{steinH}, Section~1.5)
\[
K(s,t) = \frac{1}{s-t} F_1\biggl( \frac{\phi(s) - \phi(t)}{s-t} \biggr)
\text{,} \quad s,t \in \R \text{,} \quad s \neq t \text{,}
\]
and~$V$ is the corresponding principal value operator:
\begin{equation}
\label{eq:pvo_tf}
V g(t) =
\pval \int_{-\infty}^{\infty} 
K(t,s) \, g(s) \, ds \text{,} \quad t \in \R \text{,}
\end{equation}
it follows
from well-known results for singular integral operators on Lipschitz curves 
that the operator~$V$ is bounded on~$L^2(\R)$: 
\begin{equation}
\label{eq:onedim_Tw}
\| V w \|_{L^2(\R)} \leq C (1 + 1/\gamma)^{3/2} \, \sup_{\omega \in \Gamma} |F(\omega)| \,  
\| w \|_{L^2(\R)} \text{,} 
\quad w \in L^2(\R) \text{,}
\end{equation}
where~$C$ is independent of~$\gamma$ and~$F$. The boundedness is a result by Calder\'on~\cite{calderon} for small~$L$ 
and Coifman, McIntosh, and Meyer~\cite{coifman} in the general case. 
The constant in~(\ref{eq:onedim_Tw}) can be derived from the argument
presented in Dahlberg~\cite[pp.~47--49]{dahlberg} together with the
optimal estimate given in David~\cite{david} for Cauchy integrals on Lipschitz
curves.

Employing the method of rotations and~(\ref{eq:onedim_Tw}) 
(see Dahlberg~\cite[pp.~49--50]{dahlberg} for the details), 
it follows that
\[
\| T u \|_{L^2(\R^N)} \leq C \, \sup_{z \in \Gamma} |F_1(z)|  
				\| u \|_{L^2(\R^N)}
\]
for~$u \in L^2(\R^N)$.
It can be verified directly that~$|F_1(z)| \leq C \, \lc[r]^2$ for~$z \in \Gamma$.
Moreover, since
\[
|1 - \psifd(y) | \dSyw(y) \leq C \, |\nabla \vp(y)|^2 \leq C \, \lc[r]^2, \quad y \in B_r,
\]
we obtain that 
\[
\biggl( \int_{\R^N} \! \left| \int_{\R^N} \frac{x_k - y_k}{|x-y|^{N+1}} 
	\left( 1 - \psifd(y) \right) u(y) \, \dSyw(y) \, dy \right|^2  dx \biggr)^{1/2} 
\leq
C \, \lc[r]^2 \| u \|_{L^2(\R^N)}.
\]
Thus, we have proved the inequality in~(i).
To prove~(ii), we utilise the same method but 
with the function~$F_2(z) = z(1+z^2)^{-(N+1)/2}$,~$z \in \C \setminus \{ \pm i \}$, instead of~$F_1$.
\end{proof}

\begin{corollary}
\label{c:S_I_bound_Lp}
Let\/~$1 < p < \infty$,~$r > 0$, and define~$B_r = B(0 \, ; \, 2r)$.
Then there exists a constant~$C$, which only depends on~$N$ and~$p$, such that
\begin{enumerate}
\item[{\rm (i)}] $\| (T_{k} - c_N^{-1} \Rkpsi) u \|_{L^p(B_r)} 
	\leq C  \lc[r]^2 \, \| u \|_{L^p(B_r)}${\rm;}
\item[{\rm (ii)}] $\| T_{N+1} u \|_{L^p(B_r)} 
	\leq C  \lc[r] \, \| u \|_{L^p(B_r)}${\rm;}
\end{enumerate}
for every function~$u \in L^p(B_r)$ that is supported in~$B_r$.
\end{corollary}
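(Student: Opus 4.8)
The plan is to repeat the proof of Lemma~\ref{l:S_I_bound_L2} with $L^p$ in place of $L^2$ throughout. The exponent $2$ enters that argument only through the $L^2$-boundedness~(\ref{eq:onedim_Tw}) of the one-dimensional principal value operator~$V$ of~(\ref{eq:pvo_tf}) on a Lipschitz curve, and this estimate has an exact $L^p$-counterpart for every $1<p<\infty$, with the same dependence on $\sup_{z\in\Gamma}|F|$. As preliminaries I would note that, since $\lcwz$ is small and $\vp(0)=0$, one has $|\vp(y)|\le\lc[r]|y|$ for $y\in B_r$, so the functions $\dSyw$, $\psif$, and $\psifd=\psif\,\dSyw^{-1}$ are bounded on $B_r\setminus\{0\}$; hence multiplication by any of them is bounded on $L^p$, and in particular $\psif u\in L^p(\R^N)$ whenever $u\in L^p(B_r)$ is supported in $B_r$, so $\Rkpsi u$ is well defined. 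As in the lemma, McShane's extension theorem~\cite{mcshane} lets us assume $\vp$ is globally Lipschitz with constant $L=\lc[r]$, since only the action of the operators on $B_r$ with inputs supported in $B_r$ is relevant.

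Following the proof of Lemma~\ref{l:S_I_bound_L2}, I would split $(T_k-c_N^{-1}\Rkpsi)u$ into the principal value operator $Tu=T_k u-c_N^{-1}R_k(\psifd u)$, whose kernel is $-\dfrac{x_k-y_k}{|x-y|^{N+1}}\,F_1\!\Bigl(\dfrac{\vp(x)-\vp(y)}{|x-y|}\Bigr)\,\dSyw(y)$, plus a non-singular remainder equal to $c_N^{-1}R_k$ applied to $g\,u$ with $|g(y)|\le C\,|\nabla\vp(y)|^2\le C\lc[r]^2$ on $B_r$; since $R_k$ is bounded on $L^p(\R^N)$, this remainder contributes at most $C_p\lc[r]^2\,\|u\|_{L^p(B_r)}$. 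For $Tu$ I would run the method of rotations as in Dahlberg~\cite[pp.~49--50]{dahlberg}: writing $x-y=\rho\theta$ realises $Tu$ as an average over $\theta\in S^{N-1}$ of one-dimensional principal value operators of the form~(\ref{eq:pvo_tf}) along the lines of direction $\theta$, with the Lipschitz profile $s\mapsto\vp(x_0+s\theta)$ (Lipschitz constant at most $L$) in place of $\phi$. By the theorem of Coifman, McIntosh, and Meyer~\cite{coifman}, with the quantitative constants of David~\cite{david} and Dahlberg~\cite{dahlberg}, each such operator is bounded on $L^p(\R)$ with norm at most $C_p(1+1/\gamma)^{3/2}\sup_{z\in\Gamma}|F_1(z)|$, uniformly in $\theta$, where $\gamma$ and $\Gamma$ are the parameter and contour of~(\ref{eq:def_gamma}). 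Minkowski's integral inequality over $S^{N-1}$ then gives $\|Tu\|_{L^p(\R^N)}\le C_p\sup_{z\in\Gamma}|F_1(z)|\,\|u\|_{L^p(\R^N)}$, and $\sup_{z\in\Gamma}|F_1(z)|\le C\lc[r]^2$ as in Lemma~\ref{l:S_I_bound_L2}; this gives~(i). For~(ii) the same scheme applies with $F_1$ replaced by $F_2(z)=z(1+z^2)^{-(N+1)/2}$, writing $T_{N+1}u(x)=\int_{\R^N}|x-y|^{-N}\,F_2\!\bigl((\vp(x)-\vp(y))/|x-y|\bigr)\,\dSyw(y)\,u(y)\,dy$ and using $\sup_{z\in\Gamma}|F_2(z)|\le CL=C\lc[r]$ since $F_2(0)=0$.

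The one point that must be watched is the power of $\lc[r]$: it is not enough to quote Lemma~\ref{l:S_I_bound_L2} and interpolate, since interpolating the $L^2$-bound against a crude $L^p$-bound of order $1$ would only yield a factor $\lc[r]^{2\theta}$ with $\theta<1$; one genuinely needs the $L^p$-version of~(\ref{eq:onedim_Tw}). An alternative route with the same feature is to observe that $T_k-c_N^{-1}\Rkpsi$ (respectively $T_{N+1}$) is associated with a standard Calder\'on--Zygmund kernel all of whose constants are $O(\lc[r]^2)$ (respectively $O(\lc[r])$), and then to combine this with the $L^2$-bound of Lemma~\ref{l:S_I_bound_L2} through the weak type $(1,1)$ theorem for $1<p\le2$ and a duality argument for $2\le p<\infty$; in either case the argument is routine once the kernel estimates already used for the case $p=2$ are in hand.
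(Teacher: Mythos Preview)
Your proposal is correct. The paper's own proof follows your \emph{alternative} route rather than your primary one: starting from the $L^2$-bound of Lemma~\ref{l:S_I_bound_L2}, it verifies that the $N$-dimensional kernel
\[
K(x,y)=\frac{x_k-y_k}{|x-y|^{N+1}}\,F\!\left(\frac{\vp(x)-\vp(y)}{|x-y|}\right)
\]
is a standard Calder\'on--Zygmund kernel with H\"ormander constant $D=C\bigl(\|F\|_{L^\infty(I)}+\lc[r]\,\|F'\|_{L^\infty(I)}\bigr)$ on $I=[-\lc[r],\lc[r]]$, checks that $|F_1'(s)|\le(N+1)|s|$ and $|F_2'(s)|\le 1$ so that $D$ is of order $\lc[r]^2$ (respectively $\lc[r]$), and then passes to the weak~$(1,1)$ bound and Marcinkiewicz interpolation; the resulting $L^p$-norm is $C\max\{B,D\}$ with $B$ the $L^2$-norm---precisely the mechanism that avoids the loss of powers you flag. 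Your primary route, carrying the method of rotations through directly in $L^p$ via an $L^p$-analogue of~(\ref{eq:onedim_Tw}), also works and stays closer in spirit to the proof of Lemma~\ref{l:S_I_bound_L2}; the paper's approach trades the one-dimensional $L^p$-input you need for a direct kernel computation in $\R^N$, which is a little shorter since it does not re-run the rotation argument.
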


\begin{proof}
By Lemma~\ref{l:S_I_bound_L2}, we know that these estimates hold for~$p=2$. The operators
involved have Calder\'on--Zygmund kernels of the type
\[
K(x,y) = \frac{x_k-y_k}{|x-y|^{N+1}} F \left( \frac{\vp(x) - \vp(y)}{|x-y|} \right) \text{,}
\quad x \neq y \text{,}
\]
where~$F$ is analytic in some (complex) neighbourhood of the interval~$I = [-L,L]$
and~$L$ is the Lipschitz constant of~$\vp$.
Indeed, these kernels satisfy the properties in Section~1.5 of Stein~\cite{steinH}. In particular,
for~$c > 1$ and~$\delta > 0$, there exists a constant~$D$ such that
\begin{equation*}
\sup_{|z-y| < \delta} \; \int_{ |x-y| \geq c\delta} |K(x,y) - K(x,z)| \, dx \leq 
D. \quad
\end{equation*}
Here, 
$
D =  C \bigl( \|F\|_{L^{\infty}(I)} + \lc[r] \| F' \|_{L^{\infty}(I)} \bigr),
$
where~$C$ only depends on~$c$ and~$N$, and~$I = [-\lc[r],\lc[r]]$.
The function~$F$ is one of the two functions~$F_1$ and~$F_2$ in the proof of 
Lemma~\ref{l:S_I_bound_L2}. As in the proof of that lemma, we can assume
that~$\vp$ is Lipschitz continuous with constant~$\lc[r]$ since we only estimate the operator on~$B_r$
and~$u$ has its support in~$B_r$ as well.
Since
\[
|F_1'(s)| \leq (N+1)|s| \qquad \text{and} \qquad |F_2'(s)| \leq 1
\]
for~$s \in \R$,
the result now follows from Marcinkiewicz interpolation and the weak~$L^1$-estimate that
can be derived from the~$L^2$-estimate. Indeed, the $L^p$-norm for the interpolated
operator can be shown to have the form~$C \max \{ \, B, \, D_2 \, \}$, with~$B$ being the~$L^2$-norm 
of the operator and~$C$ depending only on~$N$ and~$p$.
\end{proof}


\subsection{Estimate of~\boldmath{$\Ipsi - \S$}}

To simplify the notation, we introduce the following quotients:
\[
\Lx = \frac{\vp(x)}{|x|}, 
\qquad
\Ly = \frac{\vp(y)}{|y|}, 
\quad \text{and} \quad
\Lxy = \frac{\vp(x) - \vp(y)}{|x-y|}, 
\]
defined for~$x \neq 0$,~$y \neq 0$, and~$x \neq y$, respectively.
It is clear that
\[
|\Lx| \leq \lcw(2^{-1}|x|), 
\quad
|\Ly| \leq \lcw(2^{-1}|y|), 
\quad
|\Lxy| \leq \max\{ \lcw(2^{-1}|x|) \, , \; \lcw(2^{-1}|y|) \}. 
\]
Furthermore, let us define~$\kernd{x}{y}$ by
\[
\kernd{x}{y} = \frac{\psif(y)}{|x-y|^{N-1}} - \frac{\dSy}{|\Phi(x) - \Phi(y)|^{N-1}}, \quad x,y \in \R^N, \quad x \neq y.
\]
This is the kernel in the operator~$\Ipsi - \S$.
Let us collect some properties of this kernel.

\begin{lemma}
\label{l:est_LxyLy}
If\/ $|x| < 2|y|$, then\/ $|\Lxy^2 - \Ly^2| \leq C\lcw(2^{-1}|x|)^2$,
and if\/ $|y| > 2|x|$, then\/ $|\Lxy^2-\Ly^2| \leq C \lcw(|y|)^2|x|/|y|$. 
\end{lemma}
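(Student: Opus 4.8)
Both estimates would follow from one algebraic identity together with a case analysis on the relative sizes of $|x|$, $|y|$ and $|x-y|$. First I would establish the identity: clearing denominators and using $|x-y|^2 = |x|^2 - 2x\cdot y + |y|^2$, so that $|y|^2 - |x-y|^2 = 2x\cdot y - |x|^2$, one gets
\[
\Lxy^2 - \Ly^2 \;=\; \frac{\vp(x)^2 - 2\vp(x)\vp(y)}{|x-y|^2} \;+\; \Ly^2\,\frac{|y|^2 - |x-y|^2}{|x-y|^2},
\qquad \bigl| |y|^2 - |x-y|^2 \bigr| \le 2|x||y| + |x|^2 .
\]
The only other ingredients are the pointwise bounds recorded just above the lemma --- $|\Lx| \le \lcw(2^{-1}|x|)$, $|\Ly| \le \lcw(2^{-1}|y|)$, $|\Lxy| \le \max\{\lcw(2^{-1}|x|),\lcw(2^{-1}|y|)\}$ --- together with $|\vp(x)| \le \lcw(2^{-1}|x|)\,|x|$ and $|\vp(y)| \le \lcw(2^{-1}|y|)\,|y|$, which are~(\ref{eq:def_lc}) with one argument $0$. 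It then remains only to feed the appropriate size estimates into the identity in the two regimes.

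I would treat $|y| > 2|x|$ first, where the geometry is favourable: $|y|/2 < |y| - |x| \le |x-y| \le |x| + |y| < 3|y|/2$, so every denominator $|x-y|^2$ is comparable to $|y|^2$, while $\bigl||y|^2 - |x-y|^2\bigr| \le 3|x||y|$ since $|x| < |y|$. As $2^{-1}|x|$ and $2^{-1}|y|$ are both below $|y|$, monotonicity of $\lcw$ lets me replace them by $|y|$, so $|\vp(x)| \le \lcw(|y|)\,|x|$, $|\vp(y)| \le \lcw(|y|)\,|y|$ and $|\Ly| \le \lcw(|y|)$. Substituting into the identity and estimating term by term (using $|x|^2/|y|^2 \le |x|/|y|$) gives $|\Lxy^2 - \Ly^2| \le C\lcw(|y|)^2\,|x|/|y|$, the second assertion.

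For the first assertion, $|x| < 2|y|$, the scales $|x|$ and $|y|$ are within a bounded factor; the essential case is $|x| \le |y| \le 2|x|$, since $|y| \le |x|$ is immediate ($\max\{|x|,|y|\}=|x|$, so $|\Lxy|,|\Ly| \le \lcw(2^{-1}|x|)$ and $|\Lxy^2-\Ly^2| \le 2\lcw(2^{-1}|x|)^2$) and $|y| > 2|x|$ was just handled. In the comparable range I would split on $|x-y|$. If $|x-y| \ge \tfrac12|y|$, then $|x-y| \asymp |x| \asymp |y|$ and I estimate the identity exactly as above, now with every $\lcw$ at a scale comparable to $2^{-1}|x|$, obtaining $|\Lxy^2-\Ly^2| \le C\lcw(2^{-1}|x|)^2$. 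If $|x-y| < \tfrac12|y|$, the factors $|x-y|^{-2}$ in the identity are no longer controlled; but then $\bigl||x|-|y|\bigr| \le |x-y| < \tfrac12|y|$ pins $\max\{|x|,|y|\}/2$ between comparable multiples of $|x|$, so $|\Lxy| \le \lcw(\max\{|x|,|y|\}/2)$ and $|\Ly| \le \lcw(2^{-1}|y|)$ are both $\le C\lcw(2^{-1}|x|)$, and I conclude with the crude bound $|\Lxy^2-\Ly^2| \le |\Lxy|^2+|\Ly|^2$.

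I expect the main obstacles to be the bookkeeping of the several Lipschitz scales $2^{-1}|x|$, $2^{-1}|y|$ and $\max\{|x|,|y|\}/2$ --- in the first assertion they have to be treated as equivalent up to bounded factors, which is where the monotonicity of $\lcw$ is used --- and the isolation of the degenerate configuration $|x-y| \ll |x| \approx |y|$, where the identity is useless and one must fall back on the separate a priori bounds for $\Lxy$ and $\Ly$.
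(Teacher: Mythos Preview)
The lemma as printed contains a typo: the hypothesis of the first assertion should read $|x|>2|y|$, not $|x|<2|y|$. This is clear from the paper's own proof (which opens with ``if $|x|>2|y|$'') and from how the lemma is used downstream. Taken literally, the first assertion is false: if $\lcw$ vanishes near the origin but not at scale $|y|$, then for $|x|$ small and $|y|$ moderate one has $|\Lxy^2-\Ly^2|\asymp\lcw(|y|)^2|x|/|y|>0$ while $\lcw(2^{-1}|x|)^2=0$. That is exactly why your attempt to close the sub-range $|y|>2|x|$ of the literal first assertion by invoking the second assertion cannot work---the two bounds are not comparable in general.

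Once the typo is corrected, your argument is complete and in one place simpler than the paper's. Your identity is the paper's identity (with $|y|^2-|x-y|^2=2x\cdot y-|x|^2$), and your treatment of $|y|>2|x|$ matches the paper's. For the intended first case $|x|>2|y|$ you already have a proof hidden in your ``$|y|\le|x|$ is immediate'' line: since then $\max\{|x|,|y|\}=|x|$, both $|\Lxy|$ and $|\Ly|$ are at most $\lcw(2^{-1}|x|)$, so $|\Lxy^2-\Ly^2|\le\Lxy^2+\Ly^2\le 2\lcw(2^{-1}|x|)^2$. The paper instead feeds $|x|>2|y|$ into the identity and arrives at $8\Ly^2+8|\Lx\Ly|\,|y|/|x|+4\Lx^2\le 16\,\lcw(2^{-1}|x|)^2$; both routes are valid, and yours avoids the identity entirely in this regime. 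The whole discussion of the comparable range $|x|\le|y|\le 2|x|$ and the sub-split on $|x-y|$ is extra work induced by the misprint and can be dropped.
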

\begin{proof}
First, observe that
\[
\begin{aligned}
\Lxy^2 - \Ly^2 &=
\Ly^2 \frac{2x\cdot y - |x|^2}{|x-y|^2} - 2 \frac{\vp(x)\vp(y)}{|x-y|^2} + \frac{\vp(x)^2}{|x-y|^2}.
\end{aligned}
\]
Hence, if~$|x| > 2|y|$, then
$
|\Lxy^2 - \Ly^2| \leq 8 \Ly^2 + 8|\Lx\Ly||y|/|x| + 4\Lx^2
\leq 16 \lcw(2^{-1}|x|)^2
$
and if~$|y| < 2|x|$, then
$
|\Lxy^2 - \Ly^2| 
\leq 6 \lcw(2^{-1}|y|)^2|x|/|y|.
$
\end{proof}

\begin{lemma}
\label{l:est_kernd}
Suppose that\/~$|x| > 2|y|$ or\/~$|y| < 2|x|$. Then there exists a constant\/~$C$, depending only on\/ $N$, such
that\/ $|\kernd{x}{y}| \leq C \psif(y)\Lxy^2|x-y|^{-(N-1)}$ 
and, if also\/ $\Lxy^2 - \Ly^2 \geq -1/2$,
\[
\left| \frac{\partial}{\partial x_k} \kernd{x}{y}  \right|
\leq
C \frac{\psif(y)}{|x-y|^N} \bigl( 
|\Lxy^2 - \Ly^2| + \lcw(2^{-1}|x|)|\Lxy|
\bigr).
\]
\end{lemma}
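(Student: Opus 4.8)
The plan is to pull the common power $|x-y|^{-(N-1)}$ out of the two terms in $\kernd{x}{y}$, so that both assertions reduce to elementary one–variable estimates for $t\mapsto(1+t)^{-(N\pm1)/2}$, and then to use that $\lcwz$ is small so that all the factors $(1+\Lx^2)^{\pm(N\pm1)/2}$, $(1+\Ly^2)^{\pm(N\pm1)/2}$, $(1+\Lxy^2)^{\pm(N\pm1)/2}$ and their derivatives are bounded by constants depending only on $N$. First I would record the two identities
\[
|\Phi(x)-\Phi(y)|^2 = |x-y|^2\bigl(1+\Lxy^2\bigr), \qquad \psif(y) = \dSyw(y)\,(1+\Ly^2)^{-(N+1)/2},
\]
the first because $\Phi(x)-\Phi(y)=(x-y,\vp(x)-\vp(y))$, the second because $|y|^2+\vp(y)^2 = |y|^2(1+\Ly^2)$. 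Substituting these into the definition of $\kernd{x}{y}$ gives
\[
\kernd{x}{y} = \frac{\dSyw(y)}{|x-y|^{N-1}}\Bigl[(1+\Ly^2)^{-(N+1)/2} - (1+\Lxy^2)^{-(N-1)/2}\Bigr],
\]
and, since $0\le\Lx^2,\Ly^2,\Lxy^2\le\lcwz^2\le1$ and $1\le\dSyw(y)\le(1+\lcwz^2)^{1/2}$, one has $\dSyw(y) = \psif(y)(1+\Ly^2)^{(N+1)/2}\le C\psif(y)$, so the prefactor is already of the desired shape.

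For the pointwise bound on $\kernd{x}{y}$ I would split the bracket by means of $(1+t)^{-(N-1)/2}=(1+t)^{-(N+1)/2}+t\,(1+t)^{-(N+1)/2}$, so that
\[
(1+\Ly^2)^{-(N+1)/2}-(1+\Lxy^2)^{-(N-1)/2} = \bigl[(1+\Ly^2)^{-(N+1)/2}-(1+\Lxy^2)^{-(N+1)/2}\bigr] - \Lxy^2\,(1+\Lxy^2)^{-(N+1)/2}.
\]
The last summand is $O(\Lxy^2)$ directly, and the mean value theorem applied to $g(t)=(1+t)^{-(N+1)/2}$ on $[0,\lcwz^2]$, where $|g'(t)| = \tfrac{N+1}{2}(1+t)^{-(N+3)/2}\le\tfrac{N+1}{2}$, bounds the bracketed difference by $\tfrac{N+1}{2}|\Lxy^2-\Ly^2|$. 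This gives $|\kernd{x}{y}|\le C\,\psif(y)\,|x-y|^{-(N-1)}\bigl(\Lxy^2+|\Lxy^2-\Ly^2|\bigr)$; under the hypothesis on $|x|,|y|$ the extra summand $|\Lxy^2-\Ly^2|$ is then controlled by Lemma~\ref{l:est_LxyLy}, which yields the assertion.

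For the derivative, $\psif(y)$ and $\dSyw(y)$ do not depend on $x$, and
\[
\partial_{x_k}|x-y|^{-(N-1)} = -(N-1)(x_k-y_k)\,|x-y|^{-(N+1)},
\]
\[
\partial_{x_k}|\Phi(x)-\Phi(y)|^{-(N-1)} = -(N-1)\bigl[(x_k-y_k)+(\vp(x)-\vp(y))\,\partial_k\vp(x)\bigr]\,|\Phi(x)-\Phi(y)|^{-(N+1)}.
\]
Inserting $|\Phi(x)-\Phi(y)|^{-(N+1)}=|x-y|^{-(N+1)}(1+\Lxy^2)^{-(N+1)/2}$ and $\psif(y)=\dSyw(y)(1+\Ly^2)^{-(N+1)/2}$ and collecting terms,
\[
\partial_{x_k}\kernd{x}{y} = \frac{(N-1)\,\dSyw(y)}{|x-y|^{N+1}}\Bigl[(x_k-y_k)\bigl((1+\Lxy^2)^{-(N+1)/2}-(1+\Ly^2)^{-(N+1)/2}\bigr) + (1+\Lxy^2)^{-(N+1)/2}(\vp(x)-\vp(y))\,\partial_k\vp(x)\Bigr].
\]
Then $|x_k-y_k|\le|x-y|$, the inner difference is $\le\tfrac{N+1}{2}|\Lxy^2-\Ly^2|$ as above (here the hypothesis $\Lxy^2-\Ly^2\ge-1/2$ is used to keep, after factoring out $(1+\Ly^2)^{-(N+1)/2}$, the remaining power of $1+(\Ly^2-\Lxy^2)/(1+\Lxy^2)$ in the range where the first–order estimate holds with a constant depending only on $N$), $(1+\Lxy^2)^{-(N+1)/2}\le1$, $|\vp(x)-\vp(y)|=|x-y|\,|\Lxy|$, $|\partial_k\vp(x)|\le|\nabla\vp(x)|\le\lcw(2^{-1}|x|)$, and $\dSyw(y)\le C\psif(y)$; multiplying out cancels one power of $|x-y|$ and produces exactly $|\partial_{x_k}\kernd{x}{y}|\le C\,\psif(y)\,|x-y|^{-N}\bigl(|\Lxy^2-\Ly^2|+\lcw(2^{-1}|x|)\,|\Lxy|\bigr)$.

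All the steps are elementary, and I expect the only delicate points to be the bookkeeping of exponents in the reduction — it is the mismatch between the exponent $(N+1)/2$ carried by $\psif$ and the exponent $(N-1)/2$ coming from $|\Phi(x)-\Phi(y)|^{-(N-1)}$ that produces the genuine $\Lxy^2$ term, which is precisely why the weight $\psif$ is chosen this way — together with checking that $\Ly^2$ and $\Lxy^2$ stay in a range where $(1+t)^{\pm(N\pm1)/2}$ and its derivative are controlled in terms of $N$ alone, which is where the smallness of $\lcwz$ and the hypothesis $\Lxy^2-\Ly^2\ge-1/2$ enter. That constant–chasing, rather than any genuine difficulty, is the main obstacle.
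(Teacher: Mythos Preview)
Your approach is correct and essentially the same as the paper's: factor out $|x-y|^{-(N-1)}$ (resp.\ $|x-y|^{-(N+1)}$), rewrite everything in terms of $(1+\Ly^2)$ and $(1+\Lxy^2)$, and reduce to elementary first-order estimates for $t\mapsto(1+t)^{-(N\pm1)/2}$. The only cosmetic difference is that for the derivative the paper introduces the auxiliary quantity $a(x,y)=(\Lxy^2-\Ly^2)/(1+\Ly^2)$ and bounds $|(1+a)^{-(N+1)/2}-1|$, which is where the hypothesis $\Lxy^2-\Ly^2\ge -1/2$ genuinely enters (to keep $1+a\ge 1/2$); in your formulation the mean value theorem is applied directly on $[0,\lcwz^2]$, so that hypothesis is in fact not needed in your argument, and your parenthetical explanation of its role is slightly muddled.
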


\begin{proof}
We rewrite~$\kernd{x}{y}$ and use the triangle inequality to obtain that
\[
\begin{aligned}
|\kernd{x}{y}| &= 
\frac{\psif(y)}{|x-y|^{N-1}} \left| 
1 - \frac{1}{\psi(y)} \frac{1}{(1 + \Lxy^2)^{(N-1)/2}}
\right|\\
&\leq 
\frac{\psif(y)}{|x-y|^{N-1}} \left( \left| 
1 - \frac{1}{(1 + \Lxy^2)^{(N-1)/2}}
\right| + \frac{\bigl| (1 + \Ly^2)^{(N+1)/2} - 1 \bigr|}{(1 + \Lxy^2)^{(N-1)/2}} \right)\\
&\leq C  \frac{\psif(y) \bigl( \Lxy^2 + \Ly^2 \bigr)}{|x-y|^{N-1}}.
\end{aligned}
\]
Moreover, since
\[
\begin{aligned}
\frac{\dSy}{|\ld[x] - \ld[y]|^{N+1}} &= 
\frac{\psif(y)}{|x-y|^{N+1}} \biggl(1 + \frac{\Lxy^2 - \Ly^2}{1+\Ly^2} \biggr)^{-(N+1)/2},
\end{aligned}
\]
we obtain that
\[
\pks{x} \kernd{x}{y}
=
\frac{(1-N) \psif(y)}{|x-y|^{N+1}} \biggl( 
\frac{x_k - y_k + \partial_k \vp (x)\bigl(\vp(x) - \vp(y)\bigr)}
	{\bigl(1 + \axy \bigr)^{(N+1)/2}}
- \bigl( x_k - y_k \bigr)
\biggr),
\]
where~$\axy = (\Lxy^2 - \Ly^2)/(1+\Ly^2)$. Thus,
\[
\begin{aligned}
|\pks{x} \kernd{x}{y} | &\leq
\frac{(N-1)\psif(y)}{|x-y|^{N}} \biggl( 
\biggl| \frac{1}{(1+\axy)^{(N+1)/2}} - 1 \biggr| + |\partial_k \vp(x)\Lxy|
\biggr) \\
&\leq
C \frac{\psif(y)}{|x-y|^{N}} \bigl( 
|\Lxy^2 - \Ly^2| + \lcw(2^{-1}|x|)|\Lxy|
\bigr), \\
\end{aligned}
\]
which is the estimate we want.
\end{proof}


\noindent We now have the tools necessary to derive the following important inequality.
\begin{theorem}
\label{t:est_diff_I_1_S}
Suppose that\/~$u \in \Ckc$. Then
\begin{equation}
\label{eq:est_diff_I_1_S}
\begin{aligned}
\Np{\pks{x} (\Ipsi - \S)u}{r} 
& \leq 
C \int_0^{\infty} \Ewt[r]{\rho} \, \Np{u}{\rho} \, \frac{d\rho}{\rho} 
\end{aligned}
\end{equation}
for~$k=1,2,\ldots,N$ and~$r > 0$, where~$\Ewt[r]{\rho}$ is defined by
\[
\Ewt[r]{\rho} = \begin{cases}
r^{-N} \rho^N \lc[r]^2, & 0 < \rho \leq r,\\
\lc[r] \bigl( \lc[r] r \rho^{-1} + \lc[\rho] \bigr), & \rho > r.\\
\end{cases}
\]
The constant~$C$ only depends on~$N$ and~$p$.
\end{theorem}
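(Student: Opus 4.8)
The plan is to combine the local $L^p$ bounds of Corollary~\ref{c:S_I_bound_Lp} with the pointwise kernel estimates of Lemmas~\ref{l:est_LxyLy} and~\ref{l:est_kernd}. Write $B_r = B(0;2r)$ and $A_r = \{\,x : r \le |x| < 2r\,\}$, so $\Np{v}{r} = \bigl(r^{-N}\int_{A_r}|v|^p\,dx\bigr)^{1/p}$, and recall from~(\ref{eq:diff_S_I_1u}) that $\pks{x}(\Ipsi - \S)u = (N-1)\bigl((T_k u - c_N^{-1}\Rkpsi u) + \partial_k\vp\, T_{N+1}u\bigr)$, so $\pks{x}(\Ipsi - \S)u(x) = \pval\int_{\R^N}\pks{x}\kernd{x}{y}\,u(y)\,dy$. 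Fix $r > 0$ and split $u = u_{\mathrm{in}} + u_{\mathrm{mid}} + u_{\mathrm{out}}$, the restrictions of $u$ to $\{|y| < r/4\}$, to $\{r/4 \le |y| < 8r\}$, and to $\{|y| \ge 8r\}$ respectively. Since $u \in \Ckc$ all the sums below are finite, and it is enough to bound $\Np{\pks{x}(\Ipsi-\S)u_{*}}{r}$ for the three pieces; the integral on the right of~(\ref{eq:est_diff_I_1_S}) is then recovered by the routine comparison of a dyadic sum $\sum_j \Ewt[r]{2^j}\Np{u}{2^j}$ with the corresponding integral in $\rho$.

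The piece $u_{\mathrm{mid}}$ carries the near-diagonal interaction, where the pointwise bound for $\pks{x}\kernd{x}{y}$ fails to be integrable, so here I would appeal to Corollary~\ref{c:S_I_bound_Lp}. Since $u_{\mathrm{mid}}$ is supported in a fixed dilate of $B_r$, on which $\vp$ may be taken to be Lipschitz with constant $\lc[r]$ (after a McShane extension that does not change the operators on that dilate), parts~(i) and~(ii) of the corollary together with $|\partial_k\vp| \le \lc[r]$ give $\|\pks{x}(\Ipsi-\S)u_{\mathrm{mid}}\|_{L^p(A_r)} \le C\lc[r]^2\|u_{\mathrm{mid}}\|_{L^p}$; discretising $\|u_{\mathrm{mid}}\|_{L^p}$ into the shells $2^j \asymp r$ and dividing by $r^{N/p}$ yields $\Np{\pks{x}(\Ipsi-\S)u_{\mathrm{mid}}}{r} \le C\lc[r]^2\sum_{2^j\asymp r}\Np{u}{2^j}$, which is of the required form since $\Ewt[r]{\rho} \asymp \lc[r]^2$ when $\rho \asymp r$.

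For $u_{\mathrm{in}}$ and $u_{\mathrm{out}}$ the sources are bounded away from $A_r$, so on $A_r$ we have $|x-y| \asymp \max\{|x|,|y|\}$ and $\pks{x}\kernd{x}{y}$ is locally integrable in $y$; I would decompose each piece into dyadic shells $\{2^j \le |y| < 2^{j+1}\}$ and apply Lemma~\ref{l:est_kernd} with the appropriate case of Lemma~\ref{l:est_LxyLy}. For $u_{\mathrm{in}}$ one is in the regime $|x| > 2|y|$, where $|\Lx|, |\Lxy| \le \lc[2^{-1}|x|] \le \lc[r]$ and $|\Lxy^2 - \Ly^2| \le C\lc[r]^2$, while $\psif(y) \le C$ and $|x-y| \asymp r$; hence $|\pks{x}\kernd{x}{y}| \le C\lc[r]^2 r^{-N}$, and Hölder's inequality over the shell gives the factor $\lc[r]^2(2^j/r)^N$, matching $\Ewt[r]{2^j}$ for $2^j \le r$. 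For $u_{\mathrm{out}}$ one is in the regime $|y| > 2|x|$, where $|\Lxy^2 - \Ly^2| \le C\lc[|y|]^2|x|/|y|$ and $\lc[2^{-1}|x|]|\Lxy| \le \lc[r]\lc[2^{-1}|y|]$, while $\psif(y) \le C$ and $|x-y| \asymp |y|$; integrating over the shell and applying Hölder then produces a factor of the form of $\Ewt[r]{2^j}$ for $2^j > r$.

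Adding the three contributions and turning the dyadic sums into the integral in~(\ref{eq:est_diff_I_1_S}) completes the argument. The delicate point — the only one that is not mechanical — is the bookkeeping of the powers of $\lcw$: one must peel off exactly one factor $\lc[r]$ from each term, coming from $|\partial_k\vp(x)|$, $|\Lx|$, or $|\Lxy|$ for $x \in A_r$, and one must use that the $x$-derivative of $\kernd{x}{y}$ is controlled by $|\Lxy^2 - \Ly^2|$ and not by the larger $\Lxy^2 + \Ly^2$. This cancellation, recorded in Lemma~\ref{l:est_kernd}, is precisely what makes $\Ipsi - \S$ a $\lcw^2$-perturbation near the diagonal and gives it the stated decay away from it.
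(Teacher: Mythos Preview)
Your proposal is correct and follows the same strategy as the paper: a near-diagonal piece handled by Corollary~\ref{c:S_I_bound_Lp} and off-diagonal pieces handled via the pointwise bounds on $\partial_k K_d$ from Lemmas~\ref{l:est_LxyLy} and~\ref{l:est_kernd}. The only difference is cosmetic --- the paper uses smooth cut-offs $\eta_0,\eta_\pm$ in the ratio $|x|/|y|$ (which forces it to carry harmless boundary terms $\Xi_\pm$ and a non-singular part of $J_0$ from differentiating the cut-off), whereas your hard splitting of $u$ by $|y|$ alone avoids these extra terms and is slightly cleaner.
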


\begin{proof}
We introduce two cut-off functions~$\copm \in C^{\infty}(0,\infty)$ satisfying
\[
\cop(t) = \begin{cases}
1, & 0 \leq t \leq \frac{1}{8},\\
0, & t \geq \frac{1}{4},
\end{cases}
\qquad \text{and}
\qquad
\com(t) = \begin{cases}
0, & 0 \leq t \leq 4,\\
1, & t \geq 8.
\end{cases}
\]
Put~$\coc = 1 - \com - \cop$, so
\[
\coc(t) = \begin{cases}
1, & \frac{1}{4} \leq t \leq 4,\\
0, & 0 \leq t \leq \frac{1}{8} \text{ or } t \geq 8.
\end{cases}
\]
Using these cut-off functions, we split the integral in three parts:
\[
\begin{aligned}
(\Ipsi - \S)u(x) 
&= \int_{\R^N} \kernd{x}{y} \biggl( \cop \biggl(\qxy \biggr) + \com\bgp{\qxy} + \coc\bgp{\qxy}  \biggr)   u(y) \, dy\\
&= \Jp(x) + \Jm(x) + \Jc(x).
\end{aligned}
\]
In~$\Jpm(x)$, the kernel is smooth, so
\[
\begin{aligned}
\pks{x} \Jpm(x) &= \int_{\R^N} \biggl( \pk{x} \kernd{x}{y} \biggr) \copm\bgp{\qxy} u(y) \, dy
+ \Jpmr(x),
\end{aligned}
\]
where
\[
\Jpmr(x) = \int_{\R^N} \kernd{x}{y} \copm'\bgp{\qxy} \frac{x_k/|x|}{|y|} u(y) \, dy.
\]
It is clear from Lemmas~\ref{l:est_kernd} and~\ref{l:est_LxyLy} that
\[
\begin{aligned}
\Nppp{\pks{x} \Jpm}{r} &\leq 
\Nppp{\int_{\R^N} \copm\bgp{\qxy} \biggl| \pk{x} \kernd{x}{y} \biggr| |u(y)| \, dy}{r} 
 + \Np{\Jpmr}{r}\\
&\leq
C \Nppp{\int_{\R^N} \frac{g_{\pm}(x,y) |u(y)|}{\max\{{|y|^N, |x|^N\}}} \, dy}{r} 
 + \Np{\Jpmr}{r},
\end{aligned}
\]
where
\[
g_{-}(x,y) = \lcw^2\bgp{\hlf{|y|}} \qxy + \lcw\bgp{\hlf{|x|}} \lcw \bgp{\hlf{|y|}}, \quad \qxy \leq \frac{1}{4},
\]
and
\[
g_{+}(x,y) = \lcw^2\bgp{\hlf{|x|}}, \quad \qxy \geq 4,
\]
and~$g_{\pm}(x,y) = 0$ elsewhere. 
Suppose that~$r \leq |x| < 2r$
and~$0 \leq a \leq b \leq \infty$. 
Minkowski's inequality implies that
\[
\Nppp{\int_{a \leq \qxy \leq b} \frac{g_{\pm}(x,y) |u(y)|}{\max\{{|y|^N, |x|^N\}}} \, dy}{r} \leq
\int_{ar \leq |y| \leq 2br} \frac{\Np{g_{\pm}(\, \cdot \,,y)}{r} |u(y)|}{\max\{|y|^N,r^N\}} \, dy,
\]
Since~$\lcw$ is increasing,
\[
\Np{g_{-}(\, \cdot \,,y)}{r} \leq C \int_{|y|/2}^{|y|} \biggl( \lcw^2(\rho) \frac{r}{\rho} + \lcw(\rho)\lcw(r) \biggr) 
	\, \frac{d\rho}{\rho} , \quad y \in \R^N,
\]
and
\[
\Np{g_{+}(\, \cdot \,,y)}{r} \leq C \lcw(r)^2, \quad y \in \R^N.
\]
Now, if~$G$ is a measurable function, it is true that
\[
\begin{aligned}
\int_{a \leq |y| \leq b} \biggl( \int_{|y|/2}^{|y|} G(\rho) \, \frac{d\rho}{\rho} \biggr) |u(y)| \, dy
&= 
\int_{a}^{b} h(s) s^{N} \biggl(  \int_{s/2}^{s} G(\rho) \, \frac{d\rho}{\rho} \biggr) \, \frac{ds}{s}\\
&\leq
\int_{a/2}^{b} G(\rho) \int_{\rho}^{2\rho} s^{N} h(s) \, \frac{ds}{s} \, \frac{d\rho}{\rho}\\
&=
\int_{a/2}^{b} G(\rho) \, \Np[1]{u}{\rho} \, \frac{d\rho}{\rho},\\
\end{aligned}
\]
where~$h(s) = \int_{S^{N-1}} |u(s\theta)| \, d\theta$ for~$s > 0$, and the inequality follows from changing
the order of integration and over-estimating the domain of integration.

Thus, 
\begin{equation}
\label{eq:est_pkjm}
\begin{aligned}
\Nppp{\pks{x} \Jm}{r} \leq {} &
\int_{2r}^{\infty} \biggl( \lcw(\rho)^2 \frac{r}{\rho} + \lcw(\rho)\lcw(r) \biggr) \, \Np{u}{\rho} \, \frac{d\rho}{\rho}
 + \Np{\Jmr}{r}
\end{aligned}
\end{equation}
and
\begin{equation}
\label{eq:est_pkjp}
\begin{aligned}
\Nppp{\pks{x} \Jp}{r} \leq {} &
\int_0^{r/2} \biggl( \frac{\rho}{r}\biggr)^N \lcw^2(r) \, \Np{u}{\rho} \, \frac{d\rho}{\rho}
+ \Np{\Jpr}{r}.
\end{aligned}
\end{equation}
According to Lemma~\ref{l:est_kernd}, we can estimate the terms~$\Jpmr(x)$ by
\[
|\Jmr(x)| \leq C \int_{\frac{1}{8} \leq \qxy \leq \frac{1}{4}} \frac{|\kernd{x}{y}|}{|y|} |u(y)| dy
\leq C \int_{\frac{1}{8} \leq \qxy \leq \frac{1}{4}} \frac{|u(y)|\Lxy^2}{|y|^N} \, dy
\]
and
\[
|\Jpr(x)| \leq C \int_{4 \leq \qxy \leq 8} \frac{|\kernd{x}{y}|}{|y|} |u(y)| dy
\leq C \int_{4 \leq \qxy \leq 8} \frac{|u(y)|\Lxy^2}{|x||y|^{N-1}} \, dy,
\]
and analogously with the derivation of~(\ref{eq:est_pkjm}) and~(\ref{eq:est_pkjp}) above, we can then obtain a 
bound for~$\Np{\Jpmr}{r}$:
\begin{equation}
\label{eq:est_pkjrests_p}
\Np{\Jpr}{r} \leq
\lc[r]^2 
\int_{r/16}^{r/2} \Np{u}{\rho} \, \frac{d\rho}{\rho} 
\leq 
C \int_0^{\infty} \Ewt[r]{\rho} \, \Np{u}{\rho} \, \frac{d\rho}{\rho}
\end{equation}
and
\begin{equation}
\label{eq:est_pkjrests_m}
\Np{\Jmr}{r} \leq 
\lc[r]^2 
\int_{2r}^{16r} \Np{u}{\rho} \, \frac{d\rho}{\rho} 
\leq 
C \int_0^{\infty} \Ewt[r]{\rho} \, \Np{u}{\rho} \, \frac{d\rho}{\rho}.
\end{equation}

Turning our attention to~$\Jc$, we let~$B_r = \{ y \in \R^N \sa r/2 \leq |y| \leq 4r \}$ 
and~$\chi_{B_r}$ be the corresponding characteristic function. For~$r \leq |x| < 2r$,
\[
\begin{aligned}
\Jc(x) 
= {} &  
\int_{\R^N} \kernd{x}{y} \chi_{B_r}(y)u(y) \, dy \\
& +
\int_{\R^N} \coc\bgp{\qxy} \kernd{x}{y} (1 - \chi_{B_r}(y)) u(y) \, dy\\
= {} & \Jcs(x) + \Jcns(x).
\end{aligned}
\]
The integrand in~$\Jcns(x)$ is smooth since it is only nonzero when~$2|y| \leq |x| \leq 8|y|$
or~$2|x| \leq |y| \leq 8|x|$. Hence,
\[
\begin{aligned}
\pks{x} \Jcns = {} &
\int_{\R^N} \coc'\biggl( \qxy \biggr) \frac{x_k/|x|}{|y|} \kernd{x}{y} (1 - \chi_{B_r}(y))u(y) \, dy\\
&+ \int_{\R^N} \coc\biggl( \qxy \biggr) \frac{x_k/|x|}{|y|} \frac{\partial}{\partial x_k} \kernd{x}{y} (1 - \chi_{B_r}(y))u(y) \, dy.
\end{aligned}
\]
Using the same argument as above, we obtain that 
\begin{equation}
\label{eq:est_pkjcns}
\begin{aligned}
\Np{\pks{x} \Jcns}{r} \leq {} &
C \Nppp{\mathop{\int_{r/8 \leq |y| \leq r/2}}_{4r \leq |y| \leq 16r} \biggl( \frac{|\kernd{x}{y}|}{|y|}
+ | \pks{x} \kernd{x}{y}| \biggr) |u(y)| dy}{r}\\
\leq {} & C \int_0^{\infty} \Ewt[r]{\rho} \, \Np{u}{\rho} \, \frac{d\rho}{\rho}.
\end{aligned}
\end{equation}
Moreover,~(\ref{eq:diff_Su}) implies that
\[
\pks{x} \Jcs = (1-N)\bigl( 
	T_k( \chi_{B_r} u) - c_N^{-1} \Rkpsi (\chi_{B_r} u ) + \partial_k \vp T_{N+1} (\chi_{B_r} u)
\bigr),
\]
and an application of Corollary~\ref{c:S_I_bound_Lp} shows that
\[
\Np{\pks{x} \Jcs}{r} \leq C r^{-N/p} \lc[r]^2 \| u \chi_{B_r} \|_{L^p(\R^N)}.
\]
Thus, it is clear that
\begin{equation}
\label{eq:est_pkjcs}
\Np{\pks{x} \Jcs}{r} \leq C \int_0^{\infty} \Ewt[r]{\rho} \, \Np{u}{\rho} \, \frac{d\rho}{\rho}.
\end{equation}
The estimates 
in~(\ref{eq:est_pkjm}),~(\ref{eq:est_pkjp}),~(\ref{eq:est_pkjrests_p}),~(\ref{eq:est_pkjrests_m}),~(\ref{eq:est_pkjcns}), 
and~(\ref{eq:est_pkjcs}), imply that
the desired result holds.
\end{proof}

\begin{remark}
Analogously with the authors' proof of Lemma 3.1 in~{\rm\cite{thim1}}, it is possible to show that the to\/~$\partial_k (\Ipsi - \S)u(x)$ corresponding
``truncated'' operator\/~$T_{\epsilon} u(x)$ converges both pointwise and in\/~$\Lloc[p]$ to the right-hand 
side of~{\rm(}\ref{eq:diff_S_I_1u}{\rm)}
if\/~$u \in \Xp$ for\/~$1 < p < \infty$.
\end{remark}


\section{Reduction to a Fixed Point Problem}
\label{s:red_fixpt}

\subsection{The Fixed Point Equation}
Suppose that~$(\Ipsi - \S)u$ and~$f$ both belong to~$\Yonez$. Then, by Theorem~\ref{t:inv},
\[
\Ipsi \bigl( \psif^{-1} \Ri ( (\Ipsi - \S)u  + f) ) = (\Ipsi - \S)u + f \text{.}
\]
Formally, let~$\K$ be the operator given by
\begin{equation}
\label{eq:def_K}
\K(u) = \psif^{-1} \Ri ((\Ipsi - \S)u + f) \text{.}
\end{equation}
If~$u$ is a fixed point of~$\K$, 
then
\[
\Ipsi u = \Ipsi \K(u) = (\Ipsi - \S)u + f \text{.}
\]
Thus,~$u$ is a solution to~(\ref{eq:maineq}). To find a solution to~$\K(u) = u$, we will
employ the following fixed point theorem. We refer to a previous article
by the authors~\cite{thim1} for details and proofs. 


\subsection{A Fixed Point Theorem in Locally Convex Spaces}
\label{s:fixpt}
We let~$\X$ denote a locally convex topological space, where 
the topology
is given by a family~$\{ \pa{\,\cdot\,} \}_{\alpha \in \I}$ of seminorms that separates points.
We want to solve the equation
\begin{equation}
\label{eq:fixpt_eq}
\K(u) = u \text{,} \quad u \in \DK \text{,}
\end{equation}
where~$\K \colon \DK \rightarrow \DK$ is
a mapping defined on a subset~$\DK$ of~$\X$.
We assume that~$0 \in \DK$ and that
there exists an auxiliary linear operator~$\KK \colon \DKK \rightarrow \RI$,
where~$\DKK \subset \RI$ is a linear subspace. 
By~$\RI$ we denote the set of all real-valued functions on~$\I$, endowed with the
topology of pointwise convergence.

\subsubsection*{Existence of Fixed Points}
The operator~$\KK$ is subject to the following assumptions.
\begin{enumerate}[label=(\KK\arabic{*}), ref=(\KK\arabic{*})]
\item
\label{i:kkpos}
{\it Positivity.} The operator~$\KK$ is positive, i.e., if~$\eta \in \DKK$
is non-negative, then~$\KK \eta \geq 0$.
\item
\label{i:kkz}
{\it Fixed point inequality.}
The function~$\KZ(\,\cdot\,) = \pa[\,\cdot\,]{\K(0)} \in \DKK$, and
there exists a non-negative function~$z \in \DKK$ such that
\begin{equation}
\label{eq:tvsp_zineq}
z(\alpha) \geq \KK z(\alpha) + \KZ(\alpha) \text{,} \quad \alpha \in \Omega \text{.}
\end{equation}
\item
\label{i:kkclosed}
{\it Monotone closedness.}
The operator~$\KK$ is closed for non-negative, increasing sequences:
if~$\{ \eta_n \}$ is a non-negative sequence in~$\DKK$ such 
that~$\eta_n$ increases to~$\eta$, where~$\eta \leq z$,
and~$\KK \eta_n \rightarrow \zeta$, then~$\eta \in \DKK$ and~$\KK \eta = \zeta$.
\item
\label{i:kkdomain}
{\it Invariance.}
If~$\eta \in \DKK$ is non-negative and~$\eta \leq z$, 
then~$\KK \eta \in \DKK$. 
\end{enumerate}
The existence of a non-negative solution~$z$ to~(\ref{eq:tvsp_zineq}) enables us to prove
the existence of a non-negative solution to the equation
\begin{equation}
\label{eq:tvsp_sigmaeq}
\sigma(\alpha) = \KK \sigma(\alpha) + \KZ(\alpha) \text{,} \quad \alpha \in \Omega \text{,}
\end{equation}
which is minimal in the sense that if~$\eta \in \DKK$ is another non-negative solution
to~{\rm(}\ref{eq:tvsp_sigmaeq}{\rm)}, then~$\sigma \leq \eta$; see Lemma~1 in~\cite{thim2}.

Suppose that the operator~$\K$ maps~$\DK$ into~$\DK$.
We let~$\sigma$ be the minimal solution to~(\ref{eq:tvsp_sigmaeq}),
and put
\[
\DKS = \{ u \in \DK \sa \pa{u} \leq \sigma(\alpha) \text{ for every } \alpha \in \I \} \text{.}
\]
We require the following properties to hold.
\begin{enumerate}[label=(\ensuremath{\K}\arabic{*}), ref=(\ensuremath{\K}\arabic{*})]
\item
\label{i:DKS_est_K_KK}
{\it Subordination to~$\KK$.}
If~$u,v$ belong to~$\DKS$,
then~$\pa[\,\cdot\,]{u-v}$ belongs to~$\DKK$, and we have
\begin{equation}
\label{eq:est_K_KK}
\pa{\K(u) - \K(v)} \leq \KK\apa{u - v}(\alpha) \text{,} \quad \alpha \in \I \text{.}
\end{equation}
\item
\label{i:DKS_closed}
{\it Closedness of~$\DKS$.}
If~$\{v_k\}_{k=0}^{\infty}$ is a sequence in~$\DKS$ such 
that~$v_0 = 0$ and
\begin{equation}
\label{eq:sumcond}
\sum_{k=0}^{\infty} \pa{v_{k+1}-v_k} \leq \sigma(\alpha) \text{,} \quad \alpha \in \I \text{,}
\end{equation}
then the limit of~$v_k$ exists and belongs to~$\DKS$.
\end{enumerate}
Since~$0 \in \DKS$,~\ref{i:DKS_est_K_KK} implies that~$\pa[\,\cdot\,]{u} \in \DKK$
for~$u \in \DK$. 

\begin{theorem}
\label{t:tvsp_fixpt}
Suppose that\/~$\KK$ satisfies~\ref{i:kkpos} to~\ref{i:kkdomain} 
and that\/~$\K$ satisfies~\ref{i:DKS_est_K_KK} and~\ref{i:DKS_closed}.
Then there exists a fixed point of\/~$\K$ in\/~$\DKS$.
\end{theorem}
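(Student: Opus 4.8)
The plan is to solve~(\ref{eq:fixpt_eq}) by Picard iteration. Set $v_0 = 0$ and $v_{k+1} = \K(v_k)$ for $k \geq 0$; since $0 \in \DK$ and $\K$ maps $\DK$ into $\DK$, every $v_k$ is well defined and lies in $\DK$. The heart of the argument is the increment estimate
\[
\pa[\,\cdot\,]{v_{k+1} - v_k} \leq \KK^{k}\KZ , \qquad k \geq 0,
\]
which I would prove by induction on $k$, \emph{simultaneously} with the claim $v_k \in \DKS$. For $k = 0$ this is an equality, since $\pa[\,\cdot\,]{v_1 - v_0} = \pa[\,\cdot\,]{\K(0)} = \KZ$ by the definition in~\ref{i:kkz}, and $v_0 = 0 \in \DKS$. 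For the inductive step, assuming $v_k, v_{k-1} \in \DKS$, property~\ref{i:DKS_est_K_KK} gives $\pa[\,\cdot\,]{v_k - v_{k-1}} \in \DKK$ and $\pa{v_{k+1} - v_k} = \pa{\K(v_k) - \K(v_{k-1})} \leq \KK\apa{v_k - v_{k-1}}(\alpha)$; monotonicity of $\KK$ (a consequence of linearity together with positivity~\ref{i:kkpos}) and the invariance property~\ref{i:kkdomain} then upgrade the hypothesis $\pa[\,\cdot\,]{v_k - v_{k-1}} \leq \KK^{k-1}\KZ$ to $\pa[\,\cdot\,]{v_{k+1} - v_k} \leq \KK^{k}\KZ$.

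Next I would control the partial sums $s_n = \sum_{k=0}^{n} \KK^{k}\KZ$. They are non-negative and increasing, they satisfy $s_n = \KZ + \KK s_{n-1}$, and using the fixed point inequality~\ref{i:kkz} and induction one checks $s_n \leq z$ for all $n$. Monotone closedness~\ref{i:kkclosed} then forces $s_n$ to converge pointwise on $\I$ to a non-negative solution of~(\ref{eq:tvsp_sigmaeq}); since any non-negative solution dominates every $s_n$, this limit is exactly the minimal solution $\sigma$. Consequently $\pa{v_k} \leq \sum_{j=0}^{k-1}\pa{v_{j+1} - v_j} \leq s_{k-1} \leq \sigma(\alpha)$, which closes the simultaneous induction by establishing $v_k \in \DKS$, and moreover $\sum_{k=0}^{\infty}\pa{v_{k+1} - v_k} \leq \sigma(\alpha)$ for every $\alpha \in \I$.

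This last bound is precisely the hypothesis of~\ref{i:DKS_closed}, so $\{v_k\}$ converges in $\X$ to some $u \in \DKS$. It remains to check $\K(u) = u$. Writing $t_k(\alpha) = \sum_{j \geq k}\pa{v_{j+1} - v_j}$, we have $\pa{u - v_k} \leq t_k(\alpha)$ with $t_k \downarrow 0$ pointwise. Using~\ref{i:DKS_est_K_KK} and monotonicity of $\KK$, $\pa{\K(u) - \K(v_k)} \leq \KK\apa{u - v_k}(\alpha) \leq \KK t_k(\alpha)$; and applying~\ref{i:kkclosed} to the increasing sequence $z - t_k \uparrow z$ (whose images remain bounded since $\KK z \leq z$ by~\ref{i:kkz}) yields $\KK t_k \to 0$ pointwise. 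Hence $v_{k+1} = \K(v_k) \to \K(u)$, while also $v_{k+1} \to u$; since the seminorms $\{\pa{\,\cdot\,}\}$ separate points, $\K(u) = u$, and $u \in \DKS$ is the desired fixed point.

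The main obstacle is the interlocking of the two inductions in the first two paragraphs: each invocation of the subordination inequality~\ref{i:DKS_est_K_KK} is licensed only once one already knows the relevant iterates lie in $\DKS$, so the geometric bound on the increments and the membership $v_k \in \DKS$ must be carried along together rather than proved in sequence. The second delicate point is the final limit passage: $\K$ is not assumed continuous, only ``Lipschitz relative to $\KK$'', so one must extract $\KK t_k \to 0$ from the mere pointwise decrease of the tails $t_k$ --- and this is exactly where monotone closedness~\ref{i:kkclosed}, rather than any topological continuity of $\KK$, does the work.
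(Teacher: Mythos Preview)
Your proof is correct and is the expected Picard iteration argument. The paper itself does not prove Theorem~\ref{t:tvsp_fixpt} but defers to~\cite{thim2}; the reference there to ``Lemma~1'' for the construction of the minimal solution~$\sigma$ via the increasing sequence $s_n = \KZ + \KK s_{n-1}$ indicates that the proof in~\cite{thim2} follows exactly the route you outline, so your approach coincides with the intended one.
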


\subsubsection*{Uniqueness of Fixed Points}
Suppose that the operator~$\K$ maps~$\DK$ into itself.
We assume that the following conditions hold.
\begin{enumerate}[label=(\Roman{*}), ref=(\Roman{*})]
\item
\label{i:KKnu}
If~$u \in \DK$, then~$\KK^{n} \apa{u}$ is defined and belongs to~$\DKK$ for~$n=1,2,\ldots$,
and~$\lim_{n \rightarrow \infty} \KK^{n} \apa{u}  = 0$. 
\item
\label{i:KKfp}
If~$u \in \DK$ and~$\eta \in \DKK$ satisfy~$0 \leq \eta(\alpha) \leq \pa{u}$ 
for every~$\alpha \in \I$, 
then~$\KK \eta$ belongs to~$\DKK$. 
\item
\label{i:DK_est_K_KK}
If~$u,v$ belong to~$\DK$,
then the function~$\pa[\,\cdot\,]{u-v}$ belongs to~$\DKK$, and~(\ref{eq:est_K_KK}) holds.
\end{enumerate}

\begin{theorem}
\label{t:tvsp_fixptu}
Suppose that the operators\/~$\K$ and\/~$\KK$ satisfy~\ref{i:kkpos} 
and~\ref{i:KKnu} to~\ref{i:DK_est_K_KK}, respectively.
Then there exists at most one fixed point of\/~$\K$ in\/~$\DK$.
\end{theorem}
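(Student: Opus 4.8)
\noindent
The plan is to run a short Gronwall-type iteration, powered by the positivity~\ref{i:kkpos} and the linearity of~$\KK$, and then to cash in the decay built into~\ref{i:KKnu}. Let~$u$ and~$v$ be two fixed points of~$\K$ in~$\DK$, and set~$\beta = \pa[\,\cdot\,]{u-v}$. First I would record the basic comparison: by~\ref{i:DK_est_K_KK} applied to the pair~$u,v \in \DK$, the function~$\beta$ lies in~$\DKK$, and since~$\K u = u$ and~$\K v = v$,
\[
\beta(\alpha) = \pa{u-v} = \pa{\K(u) - \K(v)} \leq \KK\beta(\alpha), \qquad \alpha \in \I .
\]
Thus~$\beta$ is a non-negative element of~$\DKK$ that is dominated by its own~$\KK$-image.

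Next I would introduce a majorant that is already known to vanish in the limit. Since~$u,v \in \DK$, hypothesis~\ref{i:KKnu} gives that~$\apa{u}$ and~$\apa{v}$ belong to~$\DKK$, that every iterate~$\KK^{n}\apa{u}$ and~$\KK^{n}\apa{v}$ is defined and again belongs to~$\DKK$, and that~$\KK^{n}\apa{u}\to 0$ and~$\KK^{n}\apa{v}\to 0$ pointwise on~$\I$ as~$n\to\infty$. Put~$\gamma_n=\KK^{n}\apa{u}+\KK^{n}\apa{v}$; since~$\DKK$ is a linear subspace we have~$\gamma_n\in\DKK$, and~$\gamma_n\to 0$ pointwise. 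The triangle inequality for the seminorm~$\pa{\,\cdot\,}$ gives~$\beta\leq\gamma_0$. I would then show by induction that~$\beta\leq\gamma_n$ for every~$n\geq 0$: if~$\beta\leq\gamma_n$, then~$\gamma_n-\beta$ is a non-negative element of the linear subspace~$\DKK$, so~\ref{i:kkpos} yields~$\KK(\gamma_n-\beta)\geq 0$; by linearity this reads~$\KK\beta\leq\KK\gamma_n=\KK^{n+1}\apa{u}+\KK^{n+1}\apa{v}=\gamma_{n+1}$, and together with~$\beta\leq\KK\beta$ this gives~$\beta\leq\gamma_{n+1}$.

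Passing to the limit in~$0\leq\beta(\alpha)\leq\gamma_n(\alpha)$ then forces~$\beta(\alpha)=0$ for every~$\alpha\in\I$; since the family~$\{\pa{\,\cdot\,}\}_{\alpha\in\I}$ separates points, this means~$u-v=0$, i.e.~$u=v$. I do not expect a genuine obstacle here: the whole argument is essentially a two-line monotone iteration, and the only thing that demands care is the bookkeeping that keeps every application of~$\KK$ inside its domain~$\DKK$ --- which is precisely what~\ref{i:KKnu} supplies for the sequences~$\KK^{n}\apa{u}$ and~$\KK^{n}\apa{v}$ (hence for~$\gamma_n$), and what~\ref{i:DK_est_K_KK} supplies for~$\beta$. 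Condition~\ref{i:KKfp} is an invariance property of the same flavour as those appearing in the existence part; it is not called upon by this particular route.
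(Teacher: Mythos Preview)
The paper does not include its own proof of this theorem; it states the result and refers to an earlier article by the authors for details and proofs. Judged on its own terms, your argument is correct: from $\beta \leq \KK\beta$ (via~\ref{i:DK_est_K_KK} and the fixed-point property) and the initial majorant $\gamma_0=\pa[\,\cdot\,]{u}+\pa[\,\cdot\,]{v}$, positivity~\ref{i:kkpos} together with linearity of~$\KK$ on the linear subspace~$\DKK$ propagates $\beta\le\gamma_n$, and~\ref{i:KKnu} gives $\gamma_n\to 0$, whence $\beta=0$ and $u=v$ since the seminorms separate points.

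Your remark that~\ref{i:KKfp} is not invoked along this route is also accurate. An alternative line would iterate $\beta \leq \KK\beta \leq \KK^{2}\beta \leq \cdots$ directly and then need each $\KK^{k}\beta$ to land back in~$\DKK$; that is precisely the job~\ref{i:KKfp} is designed to do, once $\beta$ is dominated by some $\pa[\,\cdot\,]{w}$ with $w\in\DK$. In the paper's concrete application (the proof of Lemma~\ref{t:uniq_K}) the set~$\DK$ is taken to be a linear space, so $u-v\in\DK$ and~\ref{i:KKnu} applies to $\beta=\pa[\,\cdot\,]{u-v}$ itself. Your device of working with the sum $\gamma_n$ sidesteps this issue cleanly in the abstract setting.
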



\subsection{Construction of~\boldmath{$\K$} and~\boldmath{$\KK$}} 
\label{s:fixpt_defs}
To apply the fixed point theorem, we define an operator~$\K$ formally by~(\ref{eq:def_K}) and
an auxiliary linear operator~$\KK$, and verify the properties
in~\ref{i:kkpos}--\ref{i:kkdomain} along with~\ref{i:DKS_est_K_KK} and~\ref{i:DKS_closed}. 
The locally convex space~$\X$ will be~$\Lloc[p]$ with seminorms~$\pa{\,\cdot\,}$ given by~$\Np{\,\cdot\,}{r}$,~$r > 0$.
Let
\[
\Ew[r]{\rho}{\xi} = \Qw{N}{0}\left(\frac{\rho}{r}\right)
\Ewt[\rho]{\xi} 
\text{,} \quad r,\rho,\xi > 0 \text{,}
\]
where~$\Ewt[\rho]{\xi}$ is defined as in Theorem~\ref{t:est_diff_I_1_S}.
We define the linear operator~$\KK$ by
\begin{equation}
\label{eq:def_kk_r}
\begin{aligned}
\KK \zeta(r) &= \ck \int_0^{\infty} \!\! \int_0^{\infty} 
\Ew[r]{\rho}{\xi}
\, \zeta(\xi) \, \frac{d\xi}{\xi} \, \frac{d\rho}{\rho} \text{,} \quad r > 0 \text{,}
\end{aligned}
\end{equation}
with domain~$\DKK$ given by those measurable~$\zeta$ that satisfy
\begin{equation}
\label{eq:cond_domK}
\Ewn{|\zeta(\xi)|}
< \infty \text{.}
\end{equation}
The constant~$\ck$ is the one given in~(\ref{eq:def_ck}) below. 
Since
\[
\Ew[r]{\rho}{\xi} \leq \max \bigl\{ \, 1, \, r^{-N} \, \bigr\} \Ew[1]{\rho}{\xi}
\text{,} \quad r,\rho,\xi > 0 \text{,}
\]
it follows that~$\KK \zeta$ is defined if~$\zeta \in \DKK$.
This of course implies that~$\KK \zeta(r)$ is finite for every~$r > 0$ if~$\zeta \in \DKK$.

By~$\DK$, we denote the set of those~$u \in \Lloc[p]$ such that
\begin{equation}
\label{eq:cond_domK_Np}
\Ewn{\Np{u}{\xi}}
< \infty \text{.}
\end{equation}
This condition is equivalent to requiring that~$\Np{u}{\, \cdot \,} \in \DKK$. 
Obviously~$\K(u)$ is defined for~$u \in \Ckc$, and the following two
lemmas show that~$\K$ can be extended continuously from~$\Ckc$ to~$\DK$.

\begin{lemma}
\label{l:Npu_DKK_Yone}
Let\/~$u \in \Ckc$. Then\/~$(\Ipsi - \S)u \in \Yone$ and 
\begin{equation}
\label{eq:est_RIu}
\Np{\Ri ((\Ipsi - \S)u)}{r} \leq \KK ( \Np{u}{\, \cdot \,} )(r)
	\text{,} \quad r > 0 \text{.}
\end{equation}
\end{lemma}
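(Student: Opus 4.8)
The plan is to establish both conclusions of Lemma~\ref{l:Npu_DKK_Yone} by reducing them to the $\Npw$-estimate for $\pks{x}(\Ipsi - \S)u$ from Theorem~\ref{t:est_diff_I_1_S} and the mapping properties of $\Ri$ in Theorem~\ref{t:cont_R}. First I would show that $(\Ipsi - \S)u \in \Yone$. Since $u \in \Ckc$, equation~(\ref{eq:diff_S_I_1u}) together with the boundedness of the operators $T_k$, $T_{N+1}$ and $\Rkpsi$ shows that $(\Ipsi - \S)u \in \Wloc$; to verify membership in $\Yone$ I need to check that the norm in~(\ref{eq:y1pdef}) is finite, i.e.\ that $\int_0^\infty \Qw{N}{0}(\rho) \, \Np{\nabla (\Ipsi - \S)u}{\rho} \, \frac{d\rho}{\rho} < \infty$ and that the mean-value term over $1 \le |x| < 2$ is finite. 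The gradient integral is controlled by Theorem~\ref{t:est_diff_I_1_S}: summing~(\ref{eq:est_diff_I_1_S}) over $k$ and integrating against $\Qw{N}{0}(\rho)\,\frac{d\rho}{\rho}$ gives
\[
\int_0^\infty \Qw{N}{0}(\rho) \, \Np{\nabla(\Ipsi-\S)u}{\rho} \, \frac{d\rho}{\rho}
\le C \int_0^\infty \Qw{N}{0}(\rho) \int_0^\infty \Ewt[\rho]{\xi} \, \Np{u}{\xi} \, \frac{d\xi}{\xi} \, \frac{d\rho}{\rho},
\]
which, recalling the definition $\Ew[r]{\rho}{\xi} = \Qw{N}{0}(\rho/r)\,\Ewt[\rho]{\xi}$ with $r=1$, is exactly $\ck^{-1} \KK(\Np{u}{\,\cdot\,})(1)$ up to the constant $\ck$; this is finite because $u \in \Ckc$ has compact support away from the origin, so $\Np{u}{\xi}$ is bounded and compactly supported in $\xi \in (0,\infty)$, making the double integral over the locally integrable weight $\Ew[1]{\rho}{\xi}$ converge. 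The mean-value term is finite since $(\Ipsi-\S)u \in \Lloc$.

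Next I would prove the estimate~(\ref{eq:est_RIu}). The key identity is that $\Ri$ produces a solution whose seminorms are governed by the gradient of its argument: applying Theorem~\ref{t:cont_R} with $f$ replaced by $(\Ipsi-\S)u \in \Yone$ gives
\[
\Np{\Ri((\Ipsi-\S)u)}{r} \le C \int_0^\infty \Qw{N}{0}\!\left(\frac{\rho}{r}\right) \Np{\nabla (\Ipsi-\S)u}{\rho} \, \frac{d\rho}{\rho}.
\]
Now I substitute the bound from Theorem~\ref{t:est_diff_I_1_S} for $\Np{\nabla(\Ipsi-\S)u}{\rho}$ (summed over $k=1,\ldots,N$, absorbing the factor $N$ into the constant), obtaining
\[
\Np{\Ri((\Ipsi-\S)u)}{r} \le C \int_0^\infty \Qw{N}{0}\!\left(\frac{\rho}{r}\right) \int_0^\infty \Ewt[\rho]{\xi} \, \Np{u}{\xi} \, \frac{d\xi}{\xi} \, \frac{d\rho}{\rho}.
\]
Recognising the right-hand side as $\int_0^\infty \int_0^\infty \Ew[r]{\rho}{\xi} \, \Np{u}{\xi} \, \frac{d\xi}{\xi}\,\frac{d\rho}{\rho}$, it equals $\ck^{-1} C \cdot \KK(\Np{u}{\,\cdot\,})(r)$; choosing $\ck$ in~(\ref{eq:def_ck}) at least as large as this constant $C$ yields~(\ref{eq:est_RIu}). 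Here I would use Tonelli's theorem to justify swapping the order of integration, which is permitted since all integrands are non-negative.

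The main obstacle, and the point requiring care, is the first conclusion: verifying that $(\Ipsi-\S)u$ genuinely lies in $\Yone$ rather than merely having a finite gradient seminorm integral. Theorem~\ref{t:est_diff_I_1_S} is stated for $u \in \Ckc$, so it applies directly, but one must confirm that the resulting function is in $\Wloc$ globally (which follows from~(\ref{eq:diff_S_I_1u}) and the $L^p$-boundedness of the constituent operators, using that $\psif$ and $\psifd$ are smooth and bounded away from the origin and $\psif(y) \to$ the constant $\sqrt{1+|\nabla\vp|^2}$-independent value as $|y|\to\infty$ in a controlled way) and that the mean-value term over the annulus $1 \le |x| < 2$ is finite, which is immediate from local integrability. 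Once $(\Ipsi-\S)u \in \Yone$ is in hand, the estimate is a mechanical composition of the two quoted theorems with a non-negative Fubini swap, so no further difficulty arises.
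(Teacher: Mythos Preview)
Your proposal is correct and follows essentially the same approach as the paper's proof: verify $(\Ipsi-\S)u \in \Yone$ via the gradient estimate~(\ref{eq:est_diff_I_1_S}) from Theorem~\ref{t:est_diff_I_1_S} together with Fubini/Tonelli, then apply Theorem~\ref{t:cont_R} to $\Ri$ and substitute the same gradient bound, with $\ck$ chosen precisely so that the resulting double integral equals $\KK(\Np{u}{\,\cdot\,})(r)$. Your write-up is in fact more detailed than the paper's, which dispatches the $\Yone$-membership as ``a straightforward calculation''; the only superfluous remark is the aside about the behaviour of $\psif$ at infinity, which is not needed since boundedness of $\psif$ (immediate from $|\Ly|\le\lcwz$) already suffices.
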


\begin{proof}
Since~$u$ has compact support,~$u \in \Xp$. Hence,~$(\Ipsi - S)u$ is defined
and~$\partial_k (\Ipsi - \S)u$,~$k=1,\ldots,N$, is
given by~(\ref{eq:diff_S_I_1u}). 
It is a straightforward calculation to verify that~$(\Ipsi - \S)u$ belongs to~$\Yone$ 
by using~(\ref{eq:est_diff_I_1_S}) and changing the order of integration. By 
Theorem~\ref{t:cont_R},~$\Ri$ is defined on~$\Yone$ and
\begin{equation}
\label{eq:def_ck}
\begin{aligned}
\Np{\Ri ((\Ipsi - \S)u)}{r}
	&\leq 
	C \, \int_0^{\infty} \Qw{N}{0} \left(\frac{\rho}{r}\right) 
		\Npp{\nabla \bigl((\Ipsi - \S)u\bigr)}{\rho} \, \frac{d\rho}{\rho}\\
	& \leq \ck \int_0^{\infty} \! \! \int_0^{\infty} 
		\Ew[r]{\rho}{\xi}
\, \Np{u}{e^{-\xi}} \, \frac{d\xi}{\xi} \, \frac{d\rho}{\rho}\\
&= \KK(\Np{u}{\, \cdot \,})(r) 
\end{aligned}
\end{equation}
for every~$r > 0$, where we used Theorem~\ref{t:est_diff_I_1_S} in the last inequality.
\end{proof}

\begin{corollary}
Let\/~$u,v \in \Ckc$ and\/~$f \in \Yone$. Then 
\begin{equation}
\label{eq:NpKuKv}
\Np{\K(u) - \K(v)}{r} \leq \KK(\Np{u-v}{\, \cdot \,})(r) \text{,} \quad r > 0 \text{.}
\end{equation}
\end{corollary}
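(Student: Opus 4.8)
The plan is to reduce the estimate to Lemma~\ref{l:Npu_DKK_Yone} by exploiting linearity, so that essentially no new analytic work is needed. First I would confirm that $\K(u)$ and $\K(v)$ are well-defined: since $u,v\in\Ckc$ have compact support they lie in $\Xp$, hence Lemma~\ref{l:Npu_DKK_Yone} gives $(\Ipsi-\S)u,(\Ipsi-\S)v\in\Yone$; as $f\in\Yone$ as well, the arguments $(\Ipsi-\S)u+f$ and $(\Ipsi-\S)v+f$ lie in the domain $\Yone$ of $\Ri$ (Theorem~\ref{t:cont_R}), and composing with the bounded multiplier $\psif^{-1}$ keeps us in $\Lloc[p]$. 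Thus both $\K(u)$ and $\K(v)$ are given by~(\ref{eq:def_K}).

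Next, since $\Ri$ and $\Ipsi-\S$ are linear and $\psif^{-1}$ is a fixed multiplier, the term $f$ cancels in the difference:
\[
\K(u)-\K(v)=\psif^{-1}\Bigl(\Ri\bigl((\Ipsi-\S)u+f\bigr)-\Ri\bigl((\Ipsi-\S)v+f\bigr)\Bigr)=\psif^{-1}\,\Ri\bigl((\Ipsi-\S)(u-v)\bigr).
\]
Because $u-v\in\Ckc$, I would then apply Lemma~\ref{l:Npu_DKK_Yone} with $u$ replaced by $u-v$ to get $\Np{\Ri((\Ipsi-\S)(u-v))}{r}\leq\KK(\Np{u-v}{\,\cdot\,})(r)$ for every $r>0$.

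It only remains to absorb the factor $\psif^{-1}$. For $x$ with $r\leq|x|<2r$ one has $\psif(x)^{-1}=(1+\Lx^2)^{(N+1)/2}\,\dSyw(x)^{-1}$, and since $|\Lx|\leq\lcwz\leq\lcwm$ and $\dSyw(x)\geq1$, this is bounded above by a constant depending only on $N$; consequently $\Np{\psif^{-1}g}{r}\leq C\,\Np{g}{r}$ for all $g$, with $C=C(N)$, and this factor is absorbed into the constant $\ck$ occurring in the definition~(\ref{eq:def_kk_r}) of $\KK$ (equivalently, $\ck$ in~(\ref{eq:def_ck}) is taken slightly larger). Combining the three steps yields~(\ref{eq:NpKuKv}). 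There is essentially no obstacle here; the analytic substance is entirely contained in Lemma~\ref{l:Npu_DKK_Yone}, and the only care needed is the bookkeeping of the harmless multiplier $\psif^{-1}$ and the verification that all objects in sight are well-defined.
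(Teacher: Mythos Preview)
Your argument is correct and is exactly the intended one: the paper states the corollary with no proof, treating it as an immediate consequence of Lemma~\ref{l:Npu_DKK_Yone} via the linearity of $\Ri$ and $\Ipsi-\S$, which cancels $f$. You are in fact slightly more careful than the paper in explicitly bounding the harmless multiplier $\psif^{-1}$ and noting that its uniform bound is absorbed into the constant~$\ck$.
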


\begin{lemma}
\label{l:ext_to_B}
Let\/~$f \in \Yone$. The operator\/~$\K$ in~{\rm(}\ref{eq:def_K}{\rm)} can be extended 
to\/~$\DK$ so that~{\rm(}\ref{eq:NpKuKv}{\rm)} holds for all\/~$u,v \in \DK$.
\end{lemma}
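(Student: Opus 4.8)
The plan is to obtain $\K$ on $\DK$ by extension by continuity from the dense subset $\Ckc$ on which it is already defined by~(\ref{eq:def_K}). First I would equip $\DK$ with the seminorm $\|u\|_{\DK} := \Ewn{\Np{u}{\xi}}$, which is finite precisely because of the defining condition~(\ref{eq:cond_domK_Np}) of $\DK$. Combining the Corollary that precedes the lemma with the pointwise bound $\Ew[r]{\rho}{\xi} \le \max\{1,r^{-N}\}\,\Ew[1]{\rho}{\xi}$ already recorded in the text yields
\[
\Np{\K(u) - \K(v)}{r} \le \ck \max\{1,r^{-N}\}\,\|u - v\|_{\DK}, \qquad u,v \in \Ckc, \ r > 0,
\]
so $\K$ is locally Lipschitz, in particular uniformly continuous, as a map from $\Ckc$ (with the $\|\cdot\|_{\DK}$-topology) into $\Lloc[p]$ endowed with the seminorms $\{\Np{\cdot}{r}\}_{r>0}$, and the latter is complete. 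A uniformly continuous map from a dense subset into a complete space extends uniquely to the closure, so what remains is (a) to verify that $\Ckc$ is $\|\cdot\|_{\DK}$-dense in $\DK$, and (b) to pass~(\ref{eq:NpKuKv}) to the limit.

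For (a), given $u \in \DK$ I would first truncate: choose $\chi_n \in C_c^\infty(\R^N \setminus \{0\})$ with $0 \le \chi_n \le 1$, $\chi_n \equiv 1$ on $\{n^{-1} \le |x| \le n\}$ and $\mathrm{supp}\,\chi_n \subset \{(2n)^{-1} \le |x| \le 2n\}$. Then $\Np{u - \chi_n u}{\xi} \le \Np{u}{\xi}$ for all $\xi$ and $\Np{u-\chi_n u}{\xi} \to 0$ for each fixed $\xi$, so $\|u - \chi_n u\|_{\DK} \to 0$ by dominated convergence, the dominating function $\Ew[1]{\rho}{\xi}\Np{u}{\xi}$ being integrable in $(\rho,\xi)$ since $u \in \DK$. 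Next I would mollify: $\chi_n u \in L^p(\R^N)$ has support in a fixed compact annulus $A_n \subset \R^N\setminus\{0\}$, and an elementary estimate of $\Ewt[\rho]{\xi}$ shows that $\int_0^\infty \Ew[1]{\rho}{\xi}\,\frac{d\rho}{\rho}$ is finite and locally bounded in $\xi$, whence on functions supported in a fixed annulus one has $\|\cdot\|_{\DK} \le C_n \,\|\cdot\|_{L^p(\R^N)}$; therefore the standard mollifications $\varrho_\varepsilon * (\chi_n u) \in \Ckc$ converge to $\chi_n u$ in $\|\cdot\|_{\DK}$ as $\varepsilon \to 0$, and a diagonal choice of $\varepsilon_n \to 0$ produces a sequence $w_n \in \Ckc$ with $\|w_n - u\|_{\DK} \to 0$.

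For (b), given $u,v \in \DK$ take $u_k, v_k \in \Ckc$ with $u_k \to u$ and $v_k \to v$ in $\|\cdot\|_{\DK}$. The left-hand side of~(\ref{eq:NpKuKv}) is $\|\cdot\|_{\DK}$-continuous in $(u,v)$ because $\K$ is continuous and $\Np{\cdot}{r}$ is a seminorm; the right-hand side is too, since $\KK$ is linear and positive, giving $|\KK(\Np{u-v}{\cdot})(r) - \KK(\Np{u_k - v_k}{\cdot})(r)| \le \KK(\Np{(u-u_k)-(v-v_k)}{\cdot})(r) \le \ck\max\{1,r^{-N}\}(\|u-u_k\|_{\DK} + \|v-v_k\|_{\DK})$. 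Passing to the limit in the inequality supplied by the Corollary then gives~(\ref{eq:NpKuKv}) for all $u,v \in \DK$. The step that needs genuine (though routine) work is the density in part (a), concretely the slightly tedious estimates showing that $\int_0^\infty \Ew[1]{\rho}{\xi}\,d\rho/\rho$ is locally bounded in $\xi$ so that mollification converges; everything else is the standard extension-by-continuity argument, parallel to the density assertion for the space defined by~(\ref{eq:defTk}) used earlier.
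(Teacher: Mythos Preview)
Your proposal is correct and follows essentially the same route as the paper: introduce the norm $\|u\|_{\Bs}=\Ewn{\Np{u}{\xi}}$ on $\DK$, use the bound $\Ew[r]{\rho}{\xi}\le\max\{1,r^{-N}\}\Ew[1]{\rho}{\xi}$ together with the preceding Corollary to get a Lipschitz estimate, and extend by density of $\Ckc$. The paper's proof is terser---it extends the linear part $\Ri(\Ipsi-\S)$ rather than the affine map $\K$, and simply asserts density and the Banach property of $\Bs$---whereas you spell out the truncation/mollification argument for density and the limit passage in~(\ref{eq:NpKuKv}); these are exactly the routine details the paper suppresses.
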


\begin{proof}
Let~$\Bs$ be the space of functions in~$\DK$ with topology defined by the norm
\begin{equation}
\label{eq:norm_in_B}
\| u \|_{\Bs} = \Ewn{\Np{u}{\xi}} 
\text{.}
\end{equation}
This is a Banach space and
one can check that~$\Ckc$ is a dense subspace of~$\Bs$. 
The operator~$\Ri ((\Ipsi - \S))$ is defined for~$u \in \Ckc$ and
\begin{equation}
\label{eq:A_bnd}
\Np{\Ri ((\Ipsi - \S)u)}{r} \leq C \max \{1,\, r^{-N} \} \, \| u \|_{\Bs} \text{.}
\end{equation}
By density, this allows us to extend~$\Ri ((\Ipsi - \S))$ uniquely to all of~$\Bs$ so 
that~(\ref{eq:A_bnd}) holds
for all~$u$ in~$\Bs$. Obviously this gives an extension of the operator~$\K$ to~$\DK$ as well.
\end{proof}


\subsection{Verification of~($\mathbf{K2}$)}
\label{s:aux_eq}
Next, we are going to show that~\ref{i:kkz} holds. 
More specifically, we will prove that there exists a function~$z \in \DKK$ such that
\begin{equation}
\label{eq:z}
\begin{aligned}
z(r) \geq {} & \ck \biggl( \int_0^{\infty} \!\! \int_0^{\infty} 
	\Ew[r]{\rho}{\xi}
	\, z(\xi) \, \frac{d\xi}{\xi}  \, \frac{d\rho}{\rho}
 +  \int_0^{\infty} 
	\Qw{N}{0}\left(\frac{\rho}{r}\right) \, \Np{\nabla f}{\rho} \, \frac{d\rho}{\rho}  \biggr)
\end{aligned}
\end{equation}
for every~$r > 0$.
This will imply that~\ref{i:kkz} is satisfied since~$\Np{\K(0)}{r}$ is bounded by
the second term in the right-hand side of~(\ref{eq:z}) for~$r > 0$.
\newcommand{\f}{\ensuremath{\zeta}}

We construct a solution in the following manner.
Let~$\Ca,\,\Cb,\,$ and~$\Cc$ be positive constants depending only on~$N$ and~$p$. 
We require that~$\Cc \lcwz \leq (N-1)/2$ and~$\Ca \lcwz \leq 1/2$.
Put~$\Nh = N - \Cc \lcwz$ and~$\lcwe(\nu) = \lcw(\exp(-\nu))$ for~$\nu \in \R$. 
The function~$\lu$ is given by
\begin{equation}
\label{eq:lu}
\begin{aligned}
\lu(t) = \Cb \int_{-\infty}^{t} \exp\biggl( \Ca \int_{s}^{t} \lcwe(\nu) \, d\nu \biggr) \f(s) \, ds
+ \Cb \int_t^{\infty} \exp \bigl( \Nh(t-s) \bigr) \f(s) \, ds,
\end{aligned}
\end{equation}
where~$\f(s) = \Np{\nabla f}{\exp(-s)}$ for~$s \in \R$.
We require that~$\f$ satisfies 
\begin{equation}
\label{eq:req_lu}
\int_{-\infty}^{0} \exp\biggl( \Ca \int_{s}^{0} \lcwe(\nu) \, d\nu \biggr) \f(s) \, ds
+ \int_0^{\infty} \exp \bigl( -\Nh s \bigr) \f(s) \, ds < \infty.
\end{equation}
It is clear that if~$\f$ satisfies~\ref{eq:req_lu}, then~$\lu(t)$ is finite for every~$t \in \R$. Moreover,
if~$f \in \Yonew$, then~(\ref{eq:req_lu}) is also valid.

\newcommand{\intlam}[2]{\ensuremath{\exp \biggl( \Ca \int_{{#1}}^{{#2}} \lcwe(\nu) \, d\nu \biggr)}}
\newcommand{\intl}{\ensuremath{D}}

We change variables in the definition of the operator~$\KK$ to~$r = e^{-t}$, $\rho = e^{-\tau}$, and~$s = e^{-\sigma}$, where~$t,\tau,\sigma \in \R$.
We allow earlier functions of the variables~$r,\rho,s$ to keep the same name when it is clear from the context what we are referring to.
The action of~$\KK$ on~$\lu$ can be expressed with help from the functions
\begin{equation}
\Awo(\tau, \sigma) = \Biggl\{
\begin{aligned}
&\lcwe^2(\tau) \exp(N(\tau - \sigma)), & \tau \leq \sigma, \\
&\lcwe(\tau)\bigl( \lcwe(\tau)\exp(\sigma - \tau) + \lcwe(\sigma) \bigr),
 & \tau \geq \sigma,\\
\end{aligned}
\end{equation}
and
\begin{equation}
\label{eq:Awtpm}
\Awtpm(s, \sigma) = \left\{
\begin{aligned}
&\exp \biggl( \pm \Ca \int_s^{\sigma} \lcwe(\nu) \, d\nu \biggr) , & s \leq \sigma, \\
&\exp \bigl( \pm \Nh(\sigma - s) \bigr), & s \geq \sigma.\\
\end{aligned}
\right.
\end{equation}
To see why, observe that 
\[
\begin{aligned}
\KK \lu(t) &= 
C_K \int_{-\infty}^{\infty} \int_{-\infty}^{\infty} E(t,\tau,\sigma) \lu(\sigma) \, d\sigma \, d\tau\\
&= 
C_K \int_{-\infty}^{\infty} \int_{-\infty}^{\infty} E(t,\tau,\sigma) \int_{-\infty}^{\infty} \Awt(s,\sigma) \f(s) \, ds \, d\sigma \, d\tau\\
&= 
C_K \int_{-\infty}^{\infty} \f(s) \int_{-\infty}^{\infty} \int_{-\infty}^{\infty} \Qw{N}{0}(e^{t-\tau}) \Awo(\tau,\sigma) \Awt(s,\sigma) \, d\sigma \, d\tau \, ds.\\
\end{aligned}
\]
The following lemma provides estimates we need.
\begin{lemma}
\label{l:est_kern_Ku}
Let\/~$s,t \in \R$. Then
\begin{equation}
\label{eq:est_kern_Ku}
\int_{-\infty}^{\infty} \int_{-\infty}^{\infty} E(t,\tau,\sigma) \Awt(s,\sigma) \,
	d\sigma \, d\tau 
\leq c \Awt(s,t),
\end{equation}
and
\begin{equation}
\label{eq:est_kern_Ku_adj}
\int_{-\infty}^{\infty} \int_{-\infty}^{\infty} E(t,\tau,\sigma) \Awtm(\sigma,s) \,
	d\sigma \, d\tau 
\leq c \Awtm(t,s),
\end{equation}
where
\[
c = 2\Biggl( \frac{3\lcwz}{\Nh} + \frac{1}{\Ca} + \frac{1}{\Cc} \Biggr)^2.
\]
\end{lemma}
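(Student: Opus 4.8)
\emph{Overall plan.} The lemma says that $\Awt(s,\cdot)$ and $\Awtm(\cdot,s)$ behave like approximate eigenfunctions of the integral operator with kernel $E(t,\tau,\sigma)=\Qw{N}{0}(e^{t-\tau})\,\Awo(\tau,\sigma)$. In the logarithmic variables introduced above, $\Awt(s,\cdot)$ is the function equal to $1$ at $s$ that solves $\phi'=\Ca\lcwe\phi$ on $(s,\infty)$ and $\phi'=\Nh\phi$ on $(-\infty,s)$, and $\Awtm(\cdot,s)$ is the analogous solution of the transposed equations. The plan is to integrate the triple kernel one variable at a time — first $\sigma$, then $\tau$ — and to reproduce the claimed right-hand side up to the constant $c$. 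The only slowly varying ingredient is $\lcwe$, and the identity driving the computation is $\lcwe(\sigma)\,\Awt(s,\sigma)=\Ca^{-1}\partial_\sigma\Awt(s,\sigma)$ for $\sigma>s$ (and $\lcwe(\sigma)\,\Awtm(\sigma,s)=\Ca^{-1}\partial_\sigma\Awtm(\sigma,s)$ for $\sigma<s$): each ``moving'' factor of $\lcwe$ produced by $\Awo$ telescopes against $\Awtpm$ at the cost of one factor $\Ca^{-1}$, while every other factor of $\lcwe$ is bounded by $\lcwz$ and absorbed by the exponential decay already present in $\Awo$ and $\Qw{N}{0}$. Throughout, the hypotheses $\Ca\lcwz\le\tfrac12$ and $\Cc\lcwz\le(N-1)/2$ keep every exponential rate strictly positive.

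\emph{Step 1: the $\sigma$-integral.} Fix $s,\tau$ and estimate $\Psi(\tau):=\int_{-\infty}^{\infty}\Awo(\tau,\sigma)\,\Awt(s,\sigma)\,d\sigma$. Using the two-branch formulas for $\Awo$ and for $\Awt$, split $\R$ at $\min\{s,\tau\}$ and $\max\{s,\tau\}$; on each resulting interval the integrand is an elementary exponential. On $(\max\{s,\tau\},\infty)$ the product $\lcwe^2(\tau)e^{N(\tau-\sigma)}\exp(\Ca\int_s^\sigma\lcwe)$ decays at rate $N-\Ca\lcwz$. On $(-\infty,\min\{s,\tau\})$ the factor $e^{\Nh(\sigma-s)}$ of $\Awt$ decays at rate $\Nh$, which is what tames the non-decaying piece $\lcwe(\tau)\lcwe(\sigma)\le\lcwz\lcwe(\tau)$ of $\Awo$ (and also the piece $\lcwe^2(\tau)e^{\sigma-\tau}$). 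The two ``mixed-rate'' ranges are the important ones: on $\tau<\sigma<s$ the product $\lcwe^2(\tau)e^{N(\tau-\sigma)}e^{\Nh(\sigma-s)}$ decays at rate $N-\Nh=\Cc\lcwz$, contributing $\lesssim(\Cc\lcwz)^{-1}\lcwe^2(\tau)\,\Awt(s,\tau)\le\Cc^{-1}\lcwe(\tau)\,\Awt(s,\tau)$ (this is where $\Cc^{-1}$ enters), and on $s<\sigma<\tau$ the telescoping identity gives $\int_s^\tau\lcwe(\sigma)\,\Awt(s,\sigma)\,d\sigma=\Ca^{-1}(\Awt(s,\tau)-1)$ (this is where $\Ca^{-1}$ enters). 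Assembling the pieces, using $\lcwe(\tau)\le\lcwz$ wherever a second power of $\lcwe(\tau)$ remains, monotonicity of $\Awt(s,\cdot)$, and the rate bounds to rewrite everything in terms of $\Nh^{-1}$, $\Ca^{-1}$, $\Cc^{-1}$, one obtains
\[
\Psi(\tau)\ \le\ \lcwe(\tau)\Bigl(\tfrac{3\lcwz}{\Nh}+\tfrac1{\Ca}+\tfrac1{\Cc}\Bigr)\,\Awt(s,\tau).
\]

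\emph{Step 2: the $\tau$-integral.} Insert this bound into $\int_{-\infty}^{\infty}\Qw{N}{0}(e^{t-\tau})\,\Psi(\tau)\,d\tau$ and split at $\tau=t$ and $\tau=s$. For $\tau<t$ the weight is $1$; on the subinterval with $\tau>s$ the telescoping identity again gives $\int^{t}\lcwe(\tau)\,\Awt(s,\tau)\,d\tau\le\Ca^{-1}\Awt(s,t)$, while for $\tau<s$ the factor $e^{\Nh(\tau-s)}$ yields $\int_{-\infty}^{s}\lcwe(\tau)\,\Awt(s,\tau)\,d\tau\le\lcwz\Nh^{-1}$, which is $\le\lcwz\Nh^{-1}\Awt(s,t)$ if $t\ge s$ and is bounded directly against $e^{N(t-\tau)}$ if $t<s$. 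For $\tau>t$ the weight $e^{N(t-\tau)}$ beats the growth of $\Awt(s,\cdot)$ (rate at most $\Ca\lcwz<N$), so that range contributes $\lesssim(N-\Ca\lcwz)^{-1}\lcwz^2\,\Awt(s,t)$. Summing, $\int_{-\infty}^{\infty}\Qw{N}{0}(e^{t-\tau})\lcwe(\tau)\,\Awt(s,\tau)\,d\tau\le\bigl(3\lcwz\Nh^{-1}+\Ca^{-1}+\Cc^{-1}\bigr)\Awt(s,t)$, and combining with Step 1 — with one extra factor $2$ to cover the configuration $t<s$, where both branches of $\Awt(s,\cdot)$ are used — gives
\[
\int_{-\infty}^{\infty}\!\!\int_{-\infty}^{\infty}E(t,\tau,\sigma)\,\Awt(s,\sigma)\,d\sigma\,d\tau\ \le\ 2\Bigl(\tfrac{3\lcwz}{\Nh}+\tfrac1{\Ca}+\tfrac1{\Cc}\Bigr)^{2}\Awt(s,t),
\]
which is~{\rm(}\ref{eq:est_kern_Ku}{\rm)}. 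The adjoint bound~{\rm(}\ref{eq:est_kern_Ku_adj}{\rm)} follows from the same two-step scheme applied to $E(t,\tau,\sigma)\,\Awtm(\sigma,s)$: now $\Awtm(\cdot,s)$ grows at rate $\Nh$ on $(s,\infty)$ and satisfies $\lcwe(\sigma)\Awtm(\sigma,s)=\Ca^{-1}\partial_\sigma\Awtm(\sigma,s)$ on $(-\infty,s)$, and its growth on the right is beaten by the rate-$N$ decay of $\Awo$ and $\Qw{N}{0}$, leaving rate $N-\Nh=\Cc\lcwz$ (which again produces $\Cc^{-1}$), so the outcome is $\le c\,\Awtm(t,s)$ with the same $c$.

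\emph{Main obstacle.} The difficulty is not any single estimate but the bookkeeping. The term $\lcwe(\tau)\lcwe(\sigma)$ of $\Awo$ does not decay as $\sigma\to-\infty$, so convergence of the $\sigma$-integral hinges entirely on the rate-$\Nh$ decay that $\Awtpm$ supplies only on its ``far'' branch, while the slow growth (rate $\le\Ca\lcwz$) of $\Awtpm$ on the other branch must be dominated by the rate-$N$ factor $\Qw{N}{0}$ and the rate-$1$ or rate-$N$ factor in $\Awo$. Keeping all of these rates uniformly positive is exactly what forces $\Ca\lcwz\le\tfrac12$ and $\Cc\lcwz\le(N-1)/2$, and the genuinely careful part is tracking which branch of $\Awtpm$ is active on each of the several subintervals of integration and adding up the explicit constants to exactly $2\bigl(3\lcwz\Nh^{-1}+\Ca^{-1}+\Cc^{-1}\bigr)^{2}$.
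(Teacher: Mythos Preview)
Your proposal is correct and follows essentially the same route as the paper: integrate first in $\sigma$ (splitting at $s$ and $\tau$ and using the telescoping identity $\lcwe(\sigma)\Awt(s,\sigma)=\Ca^{-1}\partial_\sigma\Awt(s,\sigma)$ together with the exponential rates), then in $\tau$ (splitting at $s$ and $t$), and combine. The only cosmetic difference is where the overall factor $2$ is placed --- in the paper it already appears in the $\sigma$-integral bound (their (\ref{est:ES1})), not at the very end --- but the case analysis and the mechanism producing each of $\Ca^{-1}$, $\Cc^{-1}$, and $\lcwz/\Nh$ are the same.
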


\begin{proof}
First, we prove that, for~$\tau \in \R$,
\begin{equation}
\label{est:ES1}
\int_{-\infty}^{\infty} \Awo(\tau,\sigma) \Awt(s,\sigma) \, d\sigma
\leq 
2 \lcwe(\tau) \Biggl(
\frac{\lcwz}{\Nh} + \frac{1}{\Ca} + \frac{1}{\Cc} + \frac{\lcwe(\tau)}{N}
\Biggr) \Awt(s,\tau).
\end{equation}
To simplify the notation, we let
\begin{equation}
\label{eq:def_intl}
\intl(a,b) = \intlam{a}{b}, \quad a,b \in \R.
\end{equation}
Let~$s \leq \tau$. 
Then
\begin{equation}
\label{eq:aa1}
\begin{aligned}
\int_{-\infty}^s \Awo(\tau,\sigma) \Awt(s,\sigma) \, d\sigma
= {} &  
\int_{-\infty}^s e^{\Nh(\sigma - s)} \lcwe(\tau) \bigl( \lcwe(\tau)e^{\sigma - \tau} + \lcwe(\sigma) \bigr) \, d\sigma\\
\leq {} &
\lcwe(\tau)^2 \frac{e^{s - \tau}}{\Nh + 1} + \frac{\lcwz}{\Nh} \lcwe(\tau)
\leq \frac{2\lcwz}{\Nh} \lcwe(\tau)
\end{aligned}
\end{equation}
and
\begin{equation}
\label{eq:aa2}
\begin{aligned}
\int_{\tau}^{\infty} \Awo(\tau,\sigma) \Awt(s,\sigma) \, d\sigma
= {} &  
\int_{\tau}^{\infty} \lcwe^2(\tau) e^{N(\tau - \sigma)}\intl(s,\sigma)
	\, d\sigma\\
\leq {} &
\lcwe(\tau)^2 \int_{\tau}^{\infty} \frac{1}{N - \Ca\lcwe(\sigma)} \cdot \frac{\partial}{\partial \sigma} \bigl(
-e^{N(\tau - \sigma)}\intl(s,\sigma) \bigr) d\sigma \\
\leq {} & \frac{2\lcwe^2(\tau)}{N} 
\intl(s,\tau),
\end{aligned}
\end{equation}
since~$(\partial/\partial \sigma) \bigl( e^{N(\tau - \sigma)}\intl(s,\sigma) \bigr) \leq 0$ 
and~$\Ca \lcwz \leq 1/2$. Furthermore, using estimates similar to the one used in~(\ref{eq:aa2}), 
we obtain that
\[
\begin{aligned}
\int_{s}^{\tau} \Awo(\tau,\sigma) \Awt(s,\sigma) \, d\sigma
= {} &
\lcwe^2(\tau) \int_{s}^{\tau} e^{\sigma - \tau}\intl(s,\sigma) \, d\sigma
 + 
\lcwe(\tau) \int_{s}^{\tau} \lcwe(\sigma) \intl(s,\sigma) \, d\sigma\\
\leq {} & 
\lcwe^2(\tau) \bigl( \intl(s,\tau)
 - e^{s - \tau} \bigr)
 + \frac{\lcwe(\tau)}{\Ca} \bigl(
\intl(s,\tau) - 1 \bigr)\\
\leq {} &
\frac{2\lcwe(\tau)}{\Ca} \intl(s,\tau).
\end{aligned}
\]
If~$s \geq \tau$, using the same estimates as in~(\ref{eq:aa1}) and~(\ref{eq:aa2}) (where the limits~$s$ and~$\tau$
changes positions), we obtain
\[
\int_{\R \setminus [\tau,s]} \Awo(\tau,\sigma)\Awt(s,\sigma) \, d\sigma
\leq
\frac{2\lcwz\lcwe(\tau)}{\Nh} e^{\Nh(\tau - s)}
+
\frac{2\lcwe^2(\tau)}{N} e^{N(\tau - s)}.
\]
Moreover,
\[
\begin{aligned}
\int_{\tau}^s \Awo(\tau,\sigma)\Awt(s,\sigma) \, d\sigma
\leq {} &
e^{N(\tau - s)} \lcwe^2(\tau) 
	\int_{\tau}^s e^{-\Cc \lcwz(\sigma - s)} \, d\sigma
\leq \frac{\lcwe(\tau)}{\Cc} e^{\Nh(\tau - s)}.
\end{aligned}
\]
In total, we obtain the estimate in~(\ref{est:ES1}) for all~$s,\tau \in \R$.

Let us show that, for~$s,t \in \R$, 
\begin{equation}
\label{est:ES2}
\int_{-\infty}^{\infty} \lcwe(\tau) \Awt(s,\tau) \Qw{N}{0}(e^{t-\tau}) \, d\tau
\leq
\Biggl(
\frac{3\lcwz}{\Nh} + \frac{1}{\Ca} + \frac{1}{\Cc}
\Biggr) \Awt(s,t).
\end{equation}
Let~$s \leq t$. Then
\[
\int_{-\infty}^{s} \lcwe(\tau) \Awt(s,\tau) \Qw{N}{0}(e^{t-\tau}) \, d\tau
=
\int_{-\infty}^{s} 
\lcwe(\tau) e^{\Nh(\tau - s)} \, d\tau \leq \frac{\lcwz}{\Nh},
\]
and, similarly with~(\ref{eq:aa2}), 
\[
\begin{aligned}
\int_{t}^{\infty} \lcwe(\tau) \Awt(s,\tau) \Qw{N}{0}(e^{t-\tau}) \, d\tau
&= \int_t^{\infty}
\lcwe(\tau)  e^{N(t - \tau)}\intl(s,\tau) \, d\tau\\
\leq {} & 
\lcwz \int_t^{\infty}
\frac{1}{N - \Ca \lcwe(\tau)} \biggl( - \frac{\partial}{\partial \tau}
 \bigl( e^{N(t-\tau)}\intl(s,\tau)  
\bigr) \biggr) d\tau\\
\leq {} &
\frac{2\lcwz}{N} \intl(s,t).
\end{aligned}
\]
For the middle part,
\[
\int_{s}^{t} \lcwe(\tau) \Awt(s,\tau) \Qw{N}{0}(e^{t-\tau}) \, d\tau
= \int_{s}^{t} \lcwe(\tau) 
\intl(s,\tau) \, d\tau \leq \frac{1}{\Ca} \intl(s,t).
\]
If~$s \geq t$, then
\[
\int_{\R \setminus [t,s]} \lcwe(\tau) \Awt(s,\tau) \Qw{N}{0}(e^{t-\tau}) \, d\tau
\leq
\Biggl(
\frac{\lcwz}{\Nh} + \frac{2\lcwz}{N}
\Biggr) e^{\Nh(t - s)}
\]
and 
\[
\begin{aligned}
\int_{t}^s \lcwe(\tau) \Awt(s,\tau) \Qw{N}{0}(e^{t-\tau}) \, d\tau
= {} & \int_t^s \lcwe(\tau) e^{N(t-s) - \Cc\lcwz(\tau - s)} \, d\tau\\
\leq {} &
\frac{\lcwz e^{N(t-s)}}{\lcwz \Cc} \biggl( e^{-\Cc \lcwz(t - s)} - 1 \biggr)
\leq \frac{1}{\Cc} e^{\Nh(t - s)}.
\end{aligned}
\]
Thus, we obtain~(\ref{est:ES2}) for all~$s$ and~$t$ in~$\R$. It is clear that~(\ref{est:ES1}) and~(\ref{est:ES2})
imply~(\ref{eq:est_kern_Ku}) given in the lemma.

The inequality in~(\ref{eq:est_kern_Ku_adj}) can be derived analogously, using the same technique as
above.
\end{proof}


\begin{lemma}
\label{l:lu_solves}
There exists
positive constants\/~$\lcwm$,\/~$\Ca$,\/~$\Cc$, and\/~$\Cb$, depending only on\/~$N$ and\/~$p$, such that if\/~$\lcwz \leq \lcwm$
and\/~$\f(s) = \Np{\nabla f}{e^{-s}}$,\/~$s \in \R$, satisfies~{\rm(}\ref{eq:req_lu}{\rm)}, then
\[
\KK \lu(t) + \Np{\K(0)}{e^{-t}} \leq \lu(t), \quad t \in \R, \quad 1 < p < \infty.
\]
\end{lemma}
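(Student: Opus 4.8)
The plan is to exploit the explicit form of~$\lu$ together with the kernel estimate of Lemma~\ref{l:est_kern_Ku}, to control~$\Np{\K(0)}{e^{-t}}$ by comparison with~$\Awt$, and then to fix the constants~$\Ca$,~$\Cc$,~$\Cb$, and~$\lcwm$ in that order so that the two multiplicative ``loss factors'' that appear sum to at most one.

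First I would rewrite~$\lu$: comparing~(\ref{eq:lu}) with the definition~(\ref{eq:Awtpm}) of~$\Awt$ shows that
\[
\lu(\sigma) = \Cb \int_{-\infty}^{\infty} \Awt(s,\sigma)\, \zeta(s)\, ds, \qquad \sigma \in \R,
\]
the integral being finite by~(\ref{eq:req_lu}). Substituting this formula for~$\lu(\sigma)$ into~$\KK\lu(t) = \ck \int_{-\infty}^{\infty}\!\!\int_{-\infty}^{\infty} E(t,\tau,\sigma)\, \lu(\sigma)\, d\sigma\, d\tau$, interchanging the order of integration (all integrands are non-negative, so Tonelli applies), and invoking~(\ref{eq:est_kern_Ku}), I obtain
\[
\KK\lu(t) = \ck \Cb \int_{-\infty}^{\infty} \zeta(s) \biggl( \int_{-\infty}^{\infty} \!\! \int_{-\infty}^{\infty} E(t,\tau,\sigma)\, \Awt(s,\sigma)\, d\sigma\, d\tau \biggr) ds \leq \ck \Cb\, c \int_{-\infty}^{\infty} \Awt(s,t)\, \zeta(s)\, ds = \ck\, c\, \lu(t),
\]
where~$c = 2\bigl( 3\lcwz/\Nh + 1/\Ca + 1/\Cc \bigr)^{2}$ is the constant in Lemma~\ref{l:est_kern_Ku}; in particular~$\KK\lu(t)$ is finite for every~$t$.

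Next I would estimate the inhomogeneous term. By the remark following~(\ref{eq:z}) one has~$\Np{\K(0)}{r} \leq \ck \int_{0}^{\infty} \Qw{N}{0}(\rho/r)\, \Np{\nabla f}{\rho}\, d\rho/\rho$, so the substitutions~$r = e^{-t}$ and~$\rho = e^{-s}$ give~$\Np{\K(0)}{e^{-t}} \leq \ck \int_{-\infty}^{\infty} \Qw{N}{0}(e^{t-s})\, \zeta(s)\, ds$. The elementary inequality~$\Qw{N}{0}(e^{t-s}) \leq \Awt(s,t)$ holds for all~$s,t \in \R$: when~$s \geq t$ both sides are exponentials and~$e^{N(t-s)} \leq e^{\Nh(t-s)}$ because~$\Nh = N - \Cc\lcwz \leq N$ and~$t-s \leq 0$, whereas when~$s < t$ the left-hand side equals~$1$ and the right-hand side equals~$\exp\bigl( \Ca \int_{s}^{t} \lcwe(\nu)\, d\nu \bigr) \geq 1$. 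Hence
\[
\Np{\K(0)}{e^{-t}} \leq \ck \int_{-\infty}^{\infty} \Awt(s,t)\, \zeta(s)\, ds = \frac{\ck}{\Cb}\, \lu(t).
\]

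Adding the two bounds gives~$\KK\lu(t) + \Np{\K(0)}{e^{-t}} \leq \bigl( \ck\, c + \ck/\Cb \bigr)\, \lu(t)$, and everything reduces to choosing the constants so that~$\ck\, c + \ck/\Cb \leq 1$ while keeping the a priori requirements~$\Cc\lcwz \leq (N-1)/2$ and~$\Ca\lcwz \leq 1/2$. Since~$\ck$ depends only on~$N$ and~$p$, I would first take~$\Ca$ and~$\Cc$ (depending only on~$N$ and~$p$) so large that~$1/\Ca + 1/\Cc \leq 1/(4\sqrt{\ck})$, then set~$\Cb = 2\ck$ so that~$\ck/\Cb = 1/2$, and finally take~$\lcwm$ (again depending only on~$N$ and~$p$) so small that~$\Cc\lcwm \leq (N-1)/2$, $\Ca\lcwm \leq 1/2$, and~$3\lcwz/\Nh \leq 1/(4\sqrt{\ck})$ whenever~$\lcwz \leq \lcwm$; the last condition is possible because~$\Cc\lcwm \leq (N-1)/2$ forces~$\Nh \geq (N+1)/2$, so it amounts to an upper bound on~$\lcwm$ depending only on~$N$. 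For~$\lcwz \leq \lcwm$ this gives~$c \leq 2\bigl( 1/(2\sqrt{\ck}) \bigr)^{2} = 1/(2\ck)$, hence~$\ck\, c \leq 1/2$, and therefore~$\KK\lu(t) + \Np{\K(0)}{e^{-t}} \leq \lu(t)$ for every~$t \in \R$ and every~$1 < p < \infty$. The only delicate point is this final bookkeeping: that the four constants can be fixed in succession — first~$\ck$ (forced by~$N$ and~$p$), then~$\Ca$ and~$\Cc$, then~$\Cb$, then~$\lcwm$ — using only their dependence on~$N$ and~$p$ and honouring the standing constraints. The analytic substance of the argument is already contained in Lemma~\ref{l:est_kern_Ku}.
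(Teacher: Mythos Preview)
Your proof is correct and follows essentially the same route as the paper's: you express~$\lu$ as $\Cb\int\Awt(s,\cdot)\,\zeta(s)\,ds$, apply Lemma~\ref{l:est_kern_Ku} to bound~$\KK\lu$, compare~$\Qw{N}{0}(e^{t-s})\leq\Awt(s,t)$ to handle the inhomogeneous term, and then choose the constants so the resulting factor is at most~$1$. The only minor omission is that the paper also requires~$\lcwm$ to be small enough for Lemma~\ref{l:S_I_bound_L2} and Corollary~\ref{c:S_I_bound_Lp} to hold (since~$\ck$ is defined through them); you should fold this extra smallness condition into your final choice of~$\lcwm$.
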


\begin{proof}
Let~$\lcwm$ be sufficiently small for Lemma~\ref{l:S_I_bound_L2} and Corollary~\ref{c:S_I_bound_Lp} to hold when~$\lcwz \leq \lcwm$.
Moreover, let~$\Ca\lcwm \leq 1/2$ and~$\Cc \lcwm \leq (N-1)/2$.
It follows from Lemma~\ref{l:est_kern_Ku} that
\[
\KK \lu(t) \leq 2C_K \Biggl( \frac{3\lcwm}{\Nh} + \frac{1}{\Ca} + \frac{1}{\Cc} \Biggr)^2 u(t).
\]
Furthermore,~$\K(0) = \psif^{-1} \Ri f$, and from Theorem~\ref{t:cont_R}, we know that
\[
\begin{aligned}
\Np{\K(0)}{e^{-t}} &\leq C_K \int_{-\infty}^{\infty} \Qw{N}{0}(e^{t-s}) \, \f(s) \, ds \\
&\leq C_K \int_{-\infty}^{\infty} \Awt(s,t) \, \f(s) \, ds 
= \frac{C_K}{\Cb} u(t).
\end{aligned}
\]
Thus,
\[
\KK \lu(t) +\Np{\K(0)}{e^{-t}} 
\leq
C_K \Biggl(  2\Biggl( \frac{3\lcwm}{\Nh} + \frac{1}{\Ca} + \frac{1}{\Cc} \Biggr)^2 + \frac{1}{\Cb} \Biggr) u(t).
\]
By choosing~$\Ca$ and~$\Cb$ large, and~$\Cc$ large enough but smaller than~$N/(2\lcwm)$, we can see that it is
possible to bound the constant by
\begin{equation}
\label{eq:krav_ci}
C_K \Biggl(  2\Biggl( \frac{3\lcwm}{\Nh} + \frac{1}{\Ca} + \frac{1}{\Cc} \Biggr)^2 + \frac{1}{\Cb} \Biggr) \leq 1.
\end{equation}
\end{proof}

\begin{lemma}
\label{l:z_solves}
Suppose that\/~$f \in \Wloc$ satisfies
\begin{equation}
\label{eq:req_f_z}
\int_{0}^{1} \! \rho^{\Nh} \Np{\nabla f}{\rho} \, \frac{d\rho}{\rho}
+ \int_1^{\infty}  \exp\biggl( \Ca \int_{1}^{\rho} \lcw(\nu) \, \frac{d\nu}{\nu} \biggr) 
\, \Np{\nabla f}{\rho} \, \frac{d\rho}{\rho} < \infty,
\end{equation}
where\/~$1 < p < \infty$ and the constants\/~$\lcwm$,\/~$\Ca$,\/~$\Cc$, and\/~$\Cb$, are given by Lemma~\ref{l:lu_solves}.
Then the function\/~$z$ defined by\/~$z(r) = \lu(\log r)$ for\/~$r > 0$ belongs to\/~$\DKK$ and
\[
\KK z(r) + \Np{\nabla f}{r} \leq z(r), \quad r > 0.
\]
\end{lemma}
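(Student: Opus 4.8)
The plan is to deduce Lemma~\ref{l:z_solves} from Lemma~\ref{l:lu_solves} by transporting everything from the logarithmic variable used in Section~\ref{s:aux_eq} back to the radial variable~$r$, and then reconciling the two $f$-dependent terms. First I would check that the hypothesis~(\ref{eq:req_f_z}) on~$f$ is literally the condition~(\ref{eq:req_lu}) on~$\f(s)=\Np{\nabla f}{e^{-s}}$: the substitution~$\rho=e^{-s}$ sends~$\int_0^1\rho^{\Nh}\Np{\nabla f}{\rho}\,\frac{d\rho}{\rho}$ to~$\int_0^{\infty}e^{-\Nh s}\f(s)\,ds$, and the further substitution~$\nu=e^{-w}$ in the inner integral sends~$\exp\bigl(\Ca\int_1^{\rho}\lcw(\nu)\,\frac{d\nu}{\nu}\bigr)$ to~$\exp\bigl(\Ca\int_s^0\lcwe(w)\,dw\bigr)$, so the second term of~(\ref{eq:req_f_z}) becomes~$\int_{-\infty}^0\exp\bigl(\Ca\int_s^0\lcwe\bigr)\f(s)\,ds$. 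Hence~$\f$ satisfies~(\ref{eq:req_lu}), so~$\lu$ in~(\ref{eq:lu}) is finite everywhere, and the same substitutions identify~$z$ (that is,~$\lu$ read in the variable~$r$) with the right-hand side of~(\ref{eq:i:est_sol}).

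Next, Lemma~\ref{l:lu_solves} provides~$\KK\lu(t)+\Np{\K(0)}{e^{-t}}\le\lu(t)$ for all~$t$. Since the operator~$\KK$ acting on functions of~$t$ is, via~$r=e^{-t}$, $\rho=e^{-\tau}$, $\xi=e^{-\sigma}$, precisely the operator~(\ref{eq:def_kk_r}) conjugated by this change of variables, this reads~$\KK z(r)+\Np{\K(0)}{r}\le z(r)$ for~$r>0$. For membership in~$\DKK$, note~$z\ge0$ because~$\f\ge0$, so~$\Ewn{|z(\xi)|}=\ck^{-1}\KK z(1)=\ck^{-1}\KK\lu(0)$; this is finite since~$\lu(0)$ equals~$\Cb$ times the (finite) left side of~(\ref{eq:req_lu}) while Lemma~\ref{l:est_kern_Ku} bounds~$\KK\lu(t)$ by~$c\,\ck\,\lu(t)$ with~$c=2\bigl(3\lcwz/\Nh+1/\Ca+1/\Cc\bigr)^2$. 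Thus~$z\in\DKK$.

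It remains to pass from~$\Np{\K(0)}{r}$ to~$\Np{\nabla f}{r}$. Since~$\K(0)=\psif^{-1}\Ri f$ and~$\psif$ is pinched between two positive constants when~$\lcwz\le\lcwm$, Theorem~\ref{t:cont_R} together with the computation in the proof of Lemma~\ref{l:lu_solves} gives~$\Np{\K(0)}{r}\le\ck\Cb^{-1}z(r)$; but what is actually needed is to bound the seminorm~$\Np{\nabla f}{r}$, and for this I would use the explicit form of~$z$ from~(\ref{eq:i:est_sol}). Estimating its two integrals from below on~$\rho\in[r/\sqrt{2},\,r]$ (where~$(\rho/r)^{\Nh}\ge2^{-N/2}$) and~$\rho\in[r,\,\sqrt{2}\,r]$ (where the exponential factor is~$\ge1$) yields~$z(r)\ge c\,\Cb\int_{r/\sqrt{2}}^{\sqrt{2}\,r}\Np{\nabla f}{\rho}\,\frac{d\rho}{\rho}$, and an elementary covering argument inside the seminorm family — each annulus~$\{\rho\le|x|<2\rho\}$ with~$\rho\in[r/\sqrt{2},\sqrt{2}\,r]$ contains one half of~$\{r\le|x|<2r\}$ — gives~$\int_{r/\sqrt{2}}^{\sqrt{2}\,r}\Np{\nabla f}{\rho}\,\frac{d\rho}{\rho}\ge c_{N,p}\Np{\nabla f}{r}$. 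Combined with~$\KK z(r)\le c\,\ck\,z(r)$ from Lemma~\ref{l:est_kern_Ku}, this gives~$\KK z(r)+\Np{\nabla f}{r}\le\bigl(2\ck(3\lcwz/\Nh+1/\Ca+1/\Cc)^2+C_{N,p}\Cb^{-1}\bigr)z(r)\le z(r)$, the last step holding for the constants~$\Ca,\Cb,\Cc,\lcwm$ fixed in Lemma~\ref{l:lu_solves} since it is a requirement of exactly the same type as~(\ref{eq:krav_ci}) (with~$C_{N,p}$ in place of~$\ck$, both depending only on~$N$ and~$p$). I expect this last step to be the main obstacle: one must verify that the constants already chosen in Lemma~\ref{l:lu_solves} leave enough slack to absorb the extra~$\Np{\nabla f}{r}$, and one needs the seminorm comparison~$\Np{\nabla f}{r}\lesssim_{N,p}\int_{r/\sqrt{2}}^{\sqrt{2}\,r}\Np{\nabla f}{\rho}\,\frac{d\rho}{\rho}$ with an explicit constant.
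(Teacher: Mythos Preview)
Your first three steps are exactly the paper's proof: transport~(\ref{eq:req_f_z}) to~(\ref{eq:req_lu}) via~$\rho=e^{-s}$, invoke Lemma~\ref{l:lu_solves}, and deduce~$z\in\DKK$ from~$\KK z(1)<\infty$. The paper's argument ends there.

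Your step~4 is addressing what is almost certainly a typo: the~$\Np{\nabla f}{r}$ in the displayed inequality should be~$\Np{\K(0)}{r}$. The lemma exists solely to verify~\ref{i:kkz}, i.e.\ inequality~(\ref{eq:z}), whose~$f$-dependent term is~$\KZ(r)=\Np{\K(0)}{r}$; this is precisely what Lemma~\ref{l:lu_solves} supplies after the change of variable, and the paper's proof simply transports that inequality without further comment. (There is also a sign slip: the statement writes~$z(r)=\lu(\log r)$ but the proof, and the convention~$r=e^{-t}$ used throughout Section~\ref{s:aux_eq}, give~$z(r)=\lu(-\log r)$.) Your averaging argument for~$\Np{\nabla f}{r}\lesssim z(r)$ is correct and would establish the literal statement---for~$\rho\in[r/\sqrt2,r]$ the annulus~$\{\rho\le|x|<2\rho\}$ contains~$\{r\le|x|<\sqrt2\,r\}$, and for~$\rho\in[r,\sqrt2\,r]$ it contains~$\{\sqrt2\,r\le|x|<2r\}$, so the integral controls each half of~$\Np{\nabla f}{r}$, and the resulting constraint has the same form as~(\ref{eq:krav_ci})---but the paper neither proves nor needs it.
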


\begin{proof}
Since~$f$ satisfies~(\ref{eq:req_f_z}) and~(\ref{eq:krav_ci}) holds, the function~$\lu(t)$ exists and
solves the inequality in Lemma~\ref{l:lu_solves}. This also implies that
\[
\KK z(r) \leq \lu(-\log r) < \infty, \quad r > 0.
\]
In particular,~$\KK z(1) < \infty$, which implies that~$z \in \DKK$; see~(\ref{eq:cond_domK}).
\end{proof}


\subsection{Verification of~($\mathbf{K1}$),~($\mathbf{K3}$),~($\mathbf{K4}$),
(\boldmath{$\mathscr{K}1$}), and~(\boldmath{$\mathscr{K}2$})}
\label{s:verif_rest}
Obviously~$\KK$ is linear and positive, so~\ref{i:kkpos} holds.
Furthermore,~\ref{i:kkclosed} follows from the monotone convergence theorem.
If~$\eta \in \DKK$ satisfies~$0 \leq \eta \leq z$, then by~\ref{i:kkpos} and~\ref{i:kkz},
we obtain
\[
\KK \eta(r) \leq \KK z(r) \leq z(r) - \KZ(r) \leq z(r) \text{,} \quad r > 0 \text{.}
\]
Since~$z \in \DKK$, 
this shows that~\ref{i:kkdomain} holds. 

Now, since~$(K1)$--$(K4)$ holds, there exists a minimal 
solution~$\sigma$ in~$\DKK$ to~(\ref{eq:tvsp_sigmaeq}). 
Suppose that~$u,v \in \DKS$.
By Lemma~\ref{l:ext_to_B},
\begin{equation}
\label{eq:NpKuKv2}
\Np{\K(u) - \K(v)}{r} \leq \KK(\Np{u - v}{\, \cdot \,})(r) \text{,}
\end{equation}
which is the condition in~\ref{i:DKS_est_K_KK}.
Since~$\K(u)$ is measurable and 
\[
\begin{aligned}
\Np{\K(u)}{r} &\leq \Np{\K(u) - \K(0)}{r} + \KZ(r)\\
&\leq \KK \Np{u}{\,\cdot\,}(r) + \KZ(r)\\ 
&\leq \KK \sigma (r) + \KZ(r)\\
&= \sigma(r) 
\end{aligned}
\]
for every~$r > 0$,
we see that~$\K$ maps~$\DKS$ into~$\DKS$. 

Suppose that~$\{v_k\}_{k=0}^{\infty}$ is a sequence in~$\DKS$ that 
satisfies~(\ref{eq:sumcond}). Then this is a Cauchy
sequence in~$\Lloc[p]$, so it converges to a measurable function~$v$. It follows
that~$\Np{v}{r} \leq \sigma(r)$ for~$r > 0$, so~$v \in \DKS$. Hence,~\ref{i:DKS_closed} holds.

\subsection{Existence of a Fixed Point} 
\label{s:exist_fixpt}
We now apply Theorem~\ref{t:tvsp_fixpt}, which shows that there exists a function~$u$
in~$\DKS$ such that~$\K(u) = u$. We have thus derived the following result.

\begin{lemma}
\label{l:exist_K}
Suppose that the conditions of Lemma~\ref{l:z_solves} are satisfied.
Then there exists\/~$u \in \Lloc[p]$ such that\/~$\K(u) = u$ and
\begin{equation}
\label{eq:est_sol_t}
\Np{u}{e^{-t}} \leq \lu(t) 
\text{,} \quad t \in \R \text{.}
\end{equation}
\end{lemma}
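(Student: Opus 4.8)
The plan is to obtain the statement as a direct application of the abstract fixed point theorem, Theorem~\ref{t:tvsp_fixpt}, to the operators $\K$ and $\KK$ constructed in Section~\ref{s:fixpt_defs}. All the genuinely analytic content has already been extracted in the preceding subsections — the $L^p$-estimate for $\Ipsi - \S$ that sits inside the kernel of $\KK$, and the explicit supersolution $\lu$ of the scalar majorant inequality — so the remaining task is to verify that every hypothesis of Theorem~\ref{t:tvsp_fixpt} is in place and then to read off the pointwise bound~(\ref{eq:est_sol_t}).

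First I would assemble the properties of $\KK$. Linearity and positivity are immediate from~(\ref{eq:def_kk_r}), giving~\ref{i:kkpos}; monotone closedness~\ref{i:kkclosed} follows from the monotone convergence theorem; and the invariance~\ref{i:kkdomain} was verified in Section~\ref{s:verif_rest} using~\ref{i:kkpos} together with the fixed point inequality. That inequality,~\ref{i:kkz}, is exactly what Section~\ref{s:aux_eq} supplies: the hypotheses of the present lemma are by definition those of Lemma~\ref{l:z_solves}, which in turn (through Lemma~\ref{l:lu_solves}) force $\lcwz \leq \lcwm$ and the constant bound~(\ref{eq:krav_ci}); under these, Lemma~\ref{l:z_solves} furnishes the function $z$, with $z(r) = \lu(-\log r)$, belonging to $\DKK$ and satisfying $z(r) \geq \KK z(r) + \KZ(r)$ for every $r > 0$, where $\KZ(r) = \Np{\K(0)}{r}$ by the choice of seminorms for $\X = \Lloc[p]$. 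For $\K$, the subordination~\ref{i:DKS_est_K_KK} is inequality~(\ref{eq:NpKuKv2}), a consequence of Lemma~\ref{l:ext_to_B}, while the closedness~\ref{i:DKS_closed} of $\DKS$ follows from the completeness of $\Lloc[p]$; both were recorded in Section~\ref{s:verif_rest}.

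With all hypotheses checked, Theorem~\ref{t:tvsp_fixpt} produces a function $u \in \DKS$ with $\K(u) = u$, and $u \in \Lloc[p]$ since $\DKS \subset \DK \subset \Lloc[p]$. To extract~(\ref{eq:est_sol_t}) I would argue as follows. Membership of $u$ in $\DKS$ gives $\Np{u}{r} \leq \sigma(r)$ for all $r > 0$, where $\sigma$ is the minimal non-negative solution of~(\ref{eq:tvsp_sigmaeq}). Since $z$ is a non-negative supersolution — it satisfies~(\ref{eq:tvsp_zineq}) — and $\KK$ is monotone, the iterates $\sigma_0 = 0$, $\sigma_{n+1}(r) = \KK\sigma_n(r) + \KZ(r)$ remain $\leq z$, so their increasing limit $\sigma$ also satisfies $\sigma \leq z$; this is part of the construction of $\sigma$ in Lemma~1 of~\cite{thim2}. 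Hence $\Np{u}{r} \leq z(r) = \lu(-\log r)$, and the substitution $r = e^{-t}$, the same change of variables used throughout Section~\ref{s:aux_eq}, turns this into $\Np{u}{e^{-t}} \leq \lu(t)$ for every $t \in \R$.

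I do not expect a real obstacle at this stage, since the two hard ingredients — Theorem~\ref{t:est_diff_I_1_S} and the solvability of the majorant inequality in Lemmas~\ref{l:est_kern_Ku}--\ref{l:z_solves} — are already available. The one point that calls for a little care is bookkeeping in the final step: keeping the identifications $\KZ = \Np{\K(0)}{\,\cdot\,}$ and $r \leftrightarrow e^{-t}$ consistent so that the abstract bound $\Np{u}{r} \leq \sigma(r) \leq z(r)$ becomes exactly the estimate stated in the lemma.
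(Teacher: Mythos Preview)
Your proposal is correct and follows essentially the same approach as the paper: the lemma is stated immediately after the sentence ``We now apply Theorem~\ref{t:tvsp_fixpt}, which shows that there exists a function~$u$ in~$\DKS$ such that~$\K(u) = u$,'' and the verification of~\ref{i:kkpos}--\ref{i:kkdomain},~\ref{i:DKS_est_K_KK},~\ref{i:DKS_closed} is exactly the content of Sections~\ref{s:aux_eq}--\ref{s:verif_rest} that you cite. Your explicit justification that $\sigma \leq z$ (via the monotone iteration and minimality of $\sigma$) and the final substitution $r = e^{-t}$ are the right way to extract~(\ref{eq:est_sol_t}); note in passing that the formula $z(r) = \lu(-\log r)$ you use is the one consistent with the change of variables in Section~\ref{s:aux_eq} (the statement of Lemma~\ref{l:z_solves} has a sign slip here, but its proof uses $-\log r$ as you do).
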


\noindent The estimate in~(\ref{eq:est_sol_t}) implies the following asymptotic behaviour of the fixed point.

\begin{lemma}
\label{l:lu_assymp}
Let\/~$u \in \Xp$ satisfy~{\rm(}\ref{eq:est_sol_t}{\rm)}. Then\/~$\Np{u}{e^{-t}}  = o(\Awtm(t,0))$ as\/~$t \rightarrow \pm \infty$.
\end{lemma}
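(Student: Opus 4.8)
The plan is to pass through the change of variables that defines $\lu$ and to estimate the two integrals in~(\ref{eq:lu}) separately, showing that each one is $o(\Awtm(t,0))$ as $t\to+\infty$ and as $t\to-\infty$. Recall that $\Awtm(t,0)$ equals $\exp\bigl(-\Ca\int_0^t\lcwe(\nu)\,d\nu\bigr)$ for $t\geq 0$ and $\exp(-\Nh t)$ for $t\leq 0$; by~(\ref{eq:est_sol_t}) it suffices to prove $\lu(t)=o(\Awtm(t,0))$. The hypothesis that $u\in\Xp$, together with~(\ref{eq:est_sol_t}), forces~(\ref{eq:req_lu}) to hold — indeed $\Np{u}{e^{-t}}$ being $\Xp$-summable against $\Qw{N}{1}$ means $\lu$ is finite and the defining integrals converge — and this summability is exactly what will give the little-$o$ decay at the endpoints.

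First I would treat $t\to+\infty$. Write $\lu(t)=\Cb(A(t)+B(t))$ where $A(t)=\int_{-\infty}^t \intlam{s}{t}\f(s)\,ds$ and $B(t)=\int_t^\infty e^{\Nh(t-s)}\f(s)\,ds$. For $A$, factor out $\intlam{0}{t}=\Awtm(t,0)^{-1}$: since $\int_s^t\lcwe = \int_0^t\lcwe - \int_0^s\lcwe$ for $s\le t$, one gets $A(t)=\Awtm(t,0)^{-1}\int_{-\infty}^t \exp\bigl(-\Ca\int_0^s\lcwe(\nu)\,d\nu\bigr)\f(s)\,ds = \Awtm(t,0)^{-1}\int_{-\infty}^t\Awtm(s,0)\,\f(s)\,ds$ (using $\Awtm(s,0)=\exp(-\Ca\int_0^s\lcwe)$ for $s\ge0$ and $=e^{-\Nh s}$ for $s<0$, and monotonicity of $\lcwe$ handles the crossover at $s=0$ up to a harmless constant). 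Thus $A(t)/\Awtm(t,0) = \int_{-\infty}^t \Awtm(s,0)\,\f(s)\,ds$, and this is the tail-up-to-$t$ of a convergent integral by~(\ref{eq:req_lu}), hence $\to$ a finite limit; to upgrade ``$O$'' to ``$o$'' I note that $\Awtm(t,0)$ itself tends to $0$ as $t\to+\infty$ when $\int_0^\infty\lcwe=\infty$, but if that integral is finite then $\Awtm(t,0)$ is bounded below and one must instead observe $\int_{-\infty}^t\Awtm(s,0)\f(s)\,ds\to\int_{-\infty}^\infty\Awtm(s,0)\f(s)\,ds<\infty$, so $A(t)/\Awtm(t,0)=O(1)$; the genuine $o(1)$ gain comes from $B$. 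For $B(t)$, since $\Awtm(t,0)$ is slowly varying compared with $e^{-\Nh s}$, one estimates $B(t)\le e^{-\epsilon t}\int_t^\infty e^{(\epsilon+\Nh)(t-s)}(\text{something})$ — more directly, $B(t)=\int_0^\infty e^{-\Nh\sigma}\f(t+\sigma)\,d\sigma$ and since $\f\in L^1_{\mathrm{loc}}$-type with the weighted summability~(\ref{eq:req_lu}), a dominated-convergence / tail argument gives $B(t)=o(1)$, which is $o(\Awtm(t,0))$ because $\Awtm(t,0)\ge e^{-\Ca\lcwz t}\cdot(\text{const})$ decays no faster than exponentially while... actually here the cleanest route is: $B(t)e^{\Ca\int_0^t\lcwe}\le \int_t^\infty e^{\Nh(t-s)}e^{\Ca\int_0^t\lcwe}\f(s)\,ds\le \int_t^\infty e^{\Nh(t-s)}e^{\Ca\int_0^s\lcwe}\f(s)\,ds = \int_t^\infty \Awtm(s,0)^{-1}e^{\Nh(t-s)}\f(s)\,ds$, and since $\Awtm(s,0)^{-1}=e^{\Ca\int_0^s\lcwe}\le e^{\Ca\lcwz s}$ and $\Ca\lcwz\le 1/2<\Nh$, this is bounded by $\int_t^\infty e^{(\Nh-\Ca\lcwz)(t-s)}\f(s)e^{\text{small}}\,ds$, whose summability and tail-vanishing follow from~(\ref{eq:req_lu}); hence $B(t)=o(\Awtm(t,0))$.

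Next I would treat $t\to-\infty$, where $\Awtm(t,0)=e^{-\Nh t}$. Split $\lu$ the same way. For $B(t)=\int_t^\infty e^{\Nh(t-s)}\f(s)\,ds$, multiply by $e^{\Nh t}$: $e^{\Nh t}B(t)=e^{2\Nh t}\int_t^\infty e^{-\Nh s}\f(s)\,ds$; the integral $\int_t^\infty e^{-\Nh s}\f(s)\,ds$ increases as $t\to-\infty$ but is dominated by $\int_{-\infty}^0 e^{-\Nh s}\f(s)\,ds + \int_0^\infty e^{-\Nh s}\f(s)\,ds<\infty$ via~(\ref{eq:req_lu}), wait — for $t<0$ one splits $\int_t^\infty = \int_t^0+\int_0^\infty$, and $\int_t^0 e^{-\Nh s}\f(s)\,ds\le e^{-\Nh t}\int_t^0\f(s)\,ds$; the point is $e^{2\Nh t}\cdot\int_t^0 e^{-\Nh s}\f(s)\,ds \le e^{\Nh t}\int_t^0\f(s)\,ds$, and one wants this to be $o(1)$: since $\int_{-\infty}^0\f(s)\,ds$ need not converge, but $\int_{-\infty}^0 \exp(\Ca\int_s^0\lcwe)\f(s)\,ds<\infty$ with the exponential factor $\ge 1$, we get $\int_{-\infty}^0\f<\infty$, so $\int_t^0\f(s)\,ds\to 0$ and $e^{\Nh t}\to 0$, giving $o(1)$; similarly the $\int_0^\infty$ piece contributes $e^{2\Nh t}$ times a constant $=o(1)$. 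For $A(t)=\int_{-\infty}^t\intlam{s}{t}\f(s)\,ds$ with $t<0$: here $s<t<0$ so $\intlam{s}{t}=\exp(\Ca\int_s^t\lcwe)\le e^{\Ca\lcwz(t-s)}$, hence $A(t)\le e^{\Ca\lcwz t}\int_{-\infty}^t e^{-\Ca\lcwz s}\f(s)\,ds$; multiplying by $e^{\Nh t}$ and using $\Ca\lcwz< \Nh$ gives a clean bound, and the tail $\int_{-\infty}^t e^{-\Ca\lcwz s}\f(s)\,ds\to 0$ by~(\ref{eq:req_lu}) while $e^{(\Nh+\Ca\lcwz)t}\to0$, so again $o(1)=o(\Awtm(t,0))$ after dividing.

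I expect the main obstacle to be the $t\to+\infty$ analysis of the term $A(t)$ in the borderline case $\int_0^\infty\lcwe(\nu)\,d\nu<\infty$: there $\Awtm(t,0)$ does not tend to $0$, so the decay must come entirely from the convergence of the weighted integral~(\ref{eq:req_lu}) and from the $B$-term, and one has to be careful that splitting $\lu=\Cb(A+B)$ and bounding each piece by $o(\Awtm(t,0))$ really is legitimate — in particular that no cancellation is being thrown away. The remedy is to argue directly that $\lu(t)/\Awtm(t,0)$ is a sum of tails of convergent integrals (plus exponentially small corrections), each of which vanishes; the bookkeeping with the piecewise definitions of $\Awt$, $\Awtm$ and $\Qw{N}{0}$ across the thresholds $s=0$ and $s=t$, using monotonicity of $\lcwe$ and the smallness assumptions $\Ca\lcwz\le1/2$, $\Cc\lcwz\le(N-1)/2$ from Lemma~\ref{l:lu_solves}, is routine but must be done carefully.
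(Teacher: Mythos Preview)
Your argument rests on an incorrect identification of $\Awtm(t,0)$. From the definition~(\ref{eq:Awtpm}) with $(s,\sigma)=(t,0)$ one has
\[
\Awtm(t,0)=\begin{cases}\exp\bigl(-\Ca\int_t^0\lcwe(\nu)\,d\nu\bigr),& t\le 0,\\[2pt] e^{\Nh t},& t\ge 0,\end{cases}
\]
which is exactly the opposite of what you wrote. In particular, as $t\to+\infty$ the comparison function is $e^{\Nh t}\to\infty$, and as $t\to-\infty$ it is the exponential involving~$\lcwe$. With your swapped formula the entire analysis is oriented the wrong way: the ``borderline case $\int_0^\infty\lcwe<\infty$'' you worry about does not exist, and your admission that in that case $A(t)/\Awtm(t,0)$ is only $O(1)$ would actually contradict the lemma, since $A,B\ge 0$ and $\lu=\Cb(A+B)$.

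Once the formula is corrected, your splitting $\lu=\Cb(A+B)$ is the same as the paper's, but you are missing the device that produces the little-$o$. The paper fixes $\epsilon>0$, chooses an intermediate cutoff $m$, and for $t<-m$ (respectively $t>m$) breaks $\int_{-\infty}^\infty\Awt(\tau,t)\f(\tau)\,d\tau$ into three pieces $(-\infty,t]$, $(t,-m]$, $(-m,\infty)$; the two outer pieces are tails of the convergent integral~(\ref{eq:req_lu}) and hence $<\epsilon/3$ for $m$ large, while the middle piece carries the factor $e^{-\beta(t-\tau)}$ with $\beta=\Nh-\Ca\lcwz>0$ (using $\Ca\lcwz\le\tfrac12$, $\Cc\lcwz\le\tfrac{N-1}{2}$) and is $<\epsilon/3$ for $|t|$ large. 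This three-way split is what converts the bounded ratio into a vanishing one; your two-term split does not achieve this without inserting such a cutoff.
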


\begin{proof}
Let~$t < -m < 0$ for some~$m > 0$. Then
\[
\frac{\Awt(\tau,t)}{\Awtm(t,0)} = \begin{cases}
\intl(\tau,0), & \tau \leq t,\\
e^{\Nh(t-\tau)} \intl(t,0), & \tau > t,
\end{cases}
\]
where~$\intl$ is given by~(\ref{eq:def_intl}), and thus, for every~$\epsilon > 0$, we can choose~$m$ large enough so that for~$t < -m$,
\[
\begin{aligned}
\frac{\lu(t)}{\Awtm(t,0)} \leq {} & \int_{-\infty}^{t} \intl(\tau,0) \, \f(\tau) \, d\tau
+ \int_{t}^{-m} e^{\Nh(t-\tau)} \intl(t,0) \f(\tau) \, d\tau \\
& + \int_{-m}^{\infty} e^{\Nh t} \intl(t,0) e^{-\Nh \tau} \f(\tau) \, d\tau\\
\leq {} & \frac{\epsilon}{3} + \int_{-\infty}^{-m} e^{\Nh(t-\tau)} \intl(t,\tau) \intl(\tau,0) \f(\tau) \, d\tau
+ e^{-\beta t} \int_{-m}^{\infty} e^{-\Nh \tau} \f(\tau) \, d\tau\\
\leq {} & \frac{2\epsilon}{3} + \int_{-\infty}^{-m} e^{-\beta(t-\tau)} \intl(\tau,0) \f(\tau) \, d\tau
 \leq \epsilon,
\end{aligned}
\]
where~$\beta = N - \Ca \lcwz - \Cc \lcwz > 0$ if~$\Ca\lcwz \leq 1/2$ and~$\Cc\lcwz \leq (N-1)/2$. 
Similarly, one can show that if~$t > 0$,
we obtain that~$\lu(t)/\Awtm(t,0) \leq \epsilon$ if~$t > m$.
\end{proof}

\subsection{Uniqueness of Fixed Points}
\label{s:uniq}
We can now prove the following uniqueness result.

\begin{lemma}
\label{t:uniq_K}
Suppose that the conditions in Lemma~\ref{l:z_solves}
are satisfied. 
Then there exists at most one solution\/~$u$ in\/~$\Xp$ to the equation\/~$\K(u) = u$ such that
\begin{equation}
\label{eq:cond_uniq}
\Np{u}{e^{-t}} = O(\Awtm(t,0)), \quad t \rightarrow \pm \infty.
\end{equation}
\end{lemma}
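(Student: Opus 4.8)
The plan is to reduce everything to the adjoint kernel estimate~\eqref{eq:est_kern_Ku_adj} of Lemma~\ref{l:est_kern_Ku}, which exhibits $\Awtm(\,\cdot\,,0)$ as a super-solution for $\KK$ with a \emph{strictly} contractive constant; this is, in effect, the concrete version of the abstract Theorem~\ref{t:tvsp_fixptu}. Let $u_1,u_2\in\Xp$ be two fixed points of $\K$ obeying~\eqref{eq:cond_uniq}, and set $w=u_1-u_2$. From~\eqref{eq:cond_uniq} there is a constant $M_0$ with $\Np{u_i}{e^{-t}}\le M_0\,\Awtm(t,0)$ for all $t\in\R$; evaluating~\eqref{eq:est_kern_Ku_adj} at $t=s=0$ shows $\Awtm(\,\cdot\,,0)\in\DKK$, and since the integrand in~\eqref{eq:cond_domK} is monotone in $|\zeta|$ this gives $\Np{u_i}{\,\cdot\,}\in\DKK$, i.e.\ $u_i\in\DK$, so $\K(u_i)$ is defined. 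Under the hypotheses of Lemma~\ref{l:z_solves} one checks directly that $f\in\Yone$ (the weight $\Qw{N}{0}(\rho)$ is dominated by $\rho^{\Nh}$ near $0$ and by $\exp(\Ca\int_1^{\rho}\lcw\,d\nu/\nu)$ near $\infty$), so Lemma~\ref{l:ext_to_B} applies and~\eqref{eq:NpKuKv} holds on all of $\DK$. Taking $u=u_1$, $v=u_2$ and using $\K(u_i)=u_i$,
\[
\Np{w}{r}=\Np{\K(u_1)-\K(u_2)}{r}\le\KK(\Np{w}{\,\cdot\,})(r),\qquad r>0.
\]

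The second step is to convert~\eqref{eq:est_kern_Ku_adj} into a genuine contraction on the cone spanned by $\Awtm(\,\cdot\,,0)$. Changing variables to $r=e^{-t}$, $\rho=e^{-\tau}$, $\xi=e^{-\sigma}$ as in Section~\ref{s:aux_eq} and using the definition~\eqref{eq:def_kk_r} of $\KK$, any measurable $\zeta\ge0$ with $\zeta(e^{-\sigma})\le A\,\Awtm(\sigma,0)$ satisfies
\[
\begin{aligned}
\KK\zeta(e^{-t})&=\ck\int_{-\infty}^{\infty}\!\!\int_{-\infty}^{\infty}E(t,\tau,\sigma)\,\zeta(e^{-\sigma})\,d\sigma\,d\tau\\
&\le A\,\ck\int_{-\infty}^{\infty}\!\!\int_{-\infty}^{\infty}E(t,\tau,\sigma)\,\Awtm(\sigma,0)\,d\sigma\,d\tau\le A\,\ck\,c\,\Awtm(t,0),
\end{aligned}
\]
with $c$ the constant of Lemma~\ref{l:est_kern_Ku}. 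The constants $\Ca,\Cb,\Cc$ were fixed in Lemma~\ref{l:lu_solves} precisely so that~\eqref{eq:krav_ci} holds; since $\lcwz\le\lcwm$ this forces $\ck\,c\le 1-\ck/\Cb=:q<1$. Hence $\KK$ maps $\{\zeta\ge0:\zeta(e^{-\,\cdot\,})\le A\,\Awtm(\,\cdot\,,0)\}$ into itself, shrinking the constant $A$ by the factor $q$.

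Combining the two steps concludes the argument. Since $\Np{w}{e^{-t}}\le 2M_0\,\Awtm(t,0)$, an induction using the contraction estimate gives $\KK^{n}(\Np{w}{\,\cdot\,})(e^{-t})\le 2M_0\,q^{\,n}\,\Awtm(t,0)$ for every $n\ge0$, all quantities being finite. On the other hand, applying the positive monotone operator $\KK$ repeatedly to $\Np{w}{\,\cdot\,}\le\KK(\Np{w}{\,\cdot\,})$ produces the increasing chain $\Np{w}{\,\cdot\,}\le\KK(\Np{w}{\,\cdot\,})\le\cdots\le\KK^{n}(\Np{w}{\,\cdot\,})$, so $\Np{w}{e^{-t}}\le 2M_0\,q^{\,n}\,\Awtm(t,0)$ for all $n$; letting $n\to\infty$ yields $\Np{w}{r}=0$ for every $r>0$, i.e.\ $w=0$ and $u_1=u_2$.

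I expect the genuinely substantive ingredients to be only two: the adjoint estimate~\eqref{eq:est_kern_Ku_adj}, and the strict inequality $q<1$ — which is the real point, since~\eqref{eq:krav_ci} must be arranged with strict slack, the surplus here being exactly $\ck/\Cb>0$. Everything else is bookkeeping, chiefly verifying that the growth hypothesis~\eqref{eq:cond_uniq} really does put $\Np{u_i}{\,\cdot\,}$ — and hence each iterate $\KK^{n}(\Np{w}{\,\cdot\,})$ — inside $\DKK$, so that the recursion stays finite. If one instead prefers to invoke Theorem~\ref{t:tvsp_fixptu}, the same contraction estimate is precisely what verifies~\ref{i:KKnu} (with $\KK^{n}$ applied to $\Np{u}{\,\cdot\,}$ tending to $0$) and~\ref{i:KKfp}, while~\ref{i:kkpos} and~\ref{i:DK_est_K_KK} are already in hand; the one obstacle along that route is that~\ref{i:KKnu} need not hold on all of $\DK$, so one must run the theorem on the smaller $\K$-invariant domain of $u\in\DK\cap\Xp$ obeying~\eqref{eq:cond_uniq}, using $\Np{\K(0)}{e^{-t}}=O(\Awtm(t,0))$ (from Theorem~\ref{t:cont_R} and Lemma~\ref{l:lu_assymp}) to see that this domain is preserved by $\K$.
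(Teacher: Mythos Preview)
Your proof is correct and follows essentially the same route as the paper's: both hinge on the adjoint estimate~\eqref{eq:est_kern_Ku_adj} to show that $\KK$ contracts by the factor $\ck c<1$ on functions dominated by $\Awtm(\,\cdot\,,0)$, and then iterate. The only cosmetic difference is that the paper invokes the abstract Theorem~\ref{t:tvsp_fixptu} after verifying~\ref{i:KKnu}--\ref{i:DK_est_K_KK}, whereas you unwind that argument directly via the increasing chain $\Np{w}{\,\cdot\,}\le\KK^n(\Np{w}{\,\cdot\,})\le 2M_0 q^n\Awtm(\,\cdot\,,0)$; you yourself flag this equivalence in your closing paragraph.
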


\begin{proof}
To simplify notation, let
\[
\pa[t]{u} = \Np{u}{e^{-t}}, \quad t \in \R.
\] 
Choose~$\DK$ as the linear space of functions~$u \in \Lloc[p]$ such
that~(\ref{eq:cond_uniq}) holds.
Suppose that~$u \in \DK$. Then there exists constants~$A' > 0$ and~$m > 0$ such that
\[
\pa[t]{u} \leq A' \Awtm(t,0), \quad |t| \geq m.
\]
Let the constant~$A''$ be given by
\[
A'' = \sup_{|t| \leq m} \Awt(t,0) \pa[t]{u}.
\]
By continuity, it is clear that~$A'' < \infty$. Define~$A = \max\{\,A',A''\,\}$. We find that
\begin{equation}
\label{eq:uniq_K1}
\begin{aligned}
\KK \pa[ \, \cdot \, ]{u}(t) &= 
C_K \int_{-\infty}^{\infty} \int_{-\infty}^{\infty} E(t,\tau,\sigma) 
	\pa[\sigma]{u} \, d\sigma \, d\tau\\
&\leq
A C_K 
\int_{-\infty}^{\infty} \int_{-\infty}^{\infty} E(t,\tau,\sigma) 
	\Awtm(\sigma,0) \, d\sigma \, d\tau\\
&\leq
A C_K c \Awtm(t,0)
\end{aligned}
\end{equation}
for~$t \in \R$. The last inequality follows from~(\ref{eq:est_kern_Ku_adj}) in
Lemma~\ref{l:est_kern_Ku}, where the constant~$c$
is also defined. Inequality~(\ref{eq:uniq_K1}) implies that if~$u$ belongs to~$\DK$,
then~$\pa[\, \cdot \,]{u}$ belongs to~$\DKK$, and thus,~$\K(u)$ is defined. This inequality
also implies that~$\K$ maps~$\DK$ into~$\DK$.
To apply Theorem~\ref{t:tvsp_fixptu},
we need to verify that~\ref{i:KKnu}--\ref{i:DK_est_K_KK} hold.
Inequality~(\ref{eq:uniq_K1}) shows that~\ref{i:KKfp} holds:
if~$u$ belongs to~$\DK$ and~$\eta$ belongs to~$\DKK$ such that~$0 \leq \eta(t) \leq \pa[t]{u}$ for~$t \in \R$, 
then the monotonicity of~$\KK$ implies that~$\KK \eta(t) = O(\Awtm(t,0))$ 
as~$t \rightarrow \pm \infty$. Hence,~$\KK\eta$ belongs to~$\DKK$.
The fact that~\ref{i:DK_est_K_KK} holds follows from Lemma~\ref{l:ext_to_B}. 

Furthermore, by applying~(\ref{eq:uniq_K1})~$n$ times, we obtain that
\[
\KK^n \pa[\, \cdot \,]{u}(t) \leq (C_K c)^n A \Awtm(t,0).
\]
Since~$C_K c < 1$, which follows from~(\ref{eq:krav_ci}), and~$\Awtm(t,0) < \infty$, 
\[
\KK^n \pa[\, \cdot \,]{u}(t) \rightarrow 0, \quad \text{as } n \rightarrow \infty.
\]
Hence, Theorem~\ref{t:tvsp_fixptu} implies that the fixed point is indeed unique.
\end{proof}

\begin{remark}
\label{r:soluniq}
The condition in~{\rm(}\ref{eq:cond_uniq}{\rm)} is a natural condition if we consider the 
solution\/~$u \in \DK$ in Lemma~\ref{l:exist_K}, which satisfies\/~$\Np{u}{e^{-t}} \leq \lu(t)$,
so by Lemma~\ref{l:lu_assymp},~{\rm(}\ref{eq:cond_uniq}{\rm)} is valid.
\end{remark}


\section{Proof of the Main Results}
In the previous section, we proved that there exists a fixed point of~$\K$, and that
it is unique under certain conditions on~$f$. 
We will now use these theorems and results from Section~\ref{s:rieszpot} to prove similar results for solutions to~(\ref{eq:maineq}).

\subsection{Existence of Solutions to~(\ref{eq:maineq})} 
We start with deriving two technical lemmas which will show that we can use
Theorem~\ref{t:inv} to recover~$(\Ipsi - \S)u + f$ from the equation~$\K(u) = u$.
First we find integrated estimates for~$\Awt$, where~$\Awt$ is defined by~(\ref{eq:Awtpm}).

\begin{lemma}
\label{l:sol_in_Xp_estimates}
Suppose that\/~$\Ca \lcwz \leq 1/2$ and\/~$\Cc \lcwz \leq (N-1)/2$.
Then, 
\begin{equation}
\label{eq:sol_in_Xp_cond}
\int_{-\infty}^{\infty} \Qw{N}{1}(e^{-t}) \Awt(s,t) dt
\leq
\Clcwzs \, \Qw{\Nh}{1} (e^{-s})
\end{equation}
and 
\begin{equation}
\label{eq:sol_in_Yonez_cond}
\int_{-\infty}^{0} e^{-t} \, \Awt(s,t) \, dt 
+
\int_0^{\infty} (1+t)e^{-Nt} \, \Awt(s,t) \, dt 
\leq \Clcwzs \, 
\Qw{\Nh}{1}(e^{-s})
\end{equation}
for\/~$s \in \R$.
The function\/~$\Clcwzs$ will depend on\/~$\lcwz$ and\/~$s$, but for fixed\/~$\lcwz$ it is uniformly bounded
with respect to\/~$s$, and 
\begin{equation}
\label{eq:lcwz_zero}
\limsup_{\lcwz \rightarrow 0} \Clcwzs \leq \biggl\{ \begin{array}{ll} C, & s \leq 0, \\ C(1+s), & s > 0, \end{array}
\end{equation}
where\/~$C$ only depends on\/~$N$ and\/~$p$.
\end{lemma}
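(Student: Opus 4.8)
The plan is to prove both estimates by the same elementary computation: substitute the piecewise definition~(\ref{eq:Awtpm}) of~$\Awt$, use~$0 \le \lcwe(\nu) \le \lcwz$ to bound
\[
\exp\biggl( \Ca \int_{s}^{\sigma} \lcwe(\nu) \, d\nu \biggr) \le e^{\Ca\lcwz|\sigma - s|}
\quad\text{for all } s,\sigma,
\]
and split the~$t$-integration according to the sign of~$s$, the sign of~$t$, and whether~$t$ lies below or above~$s$. On each resulting interval the integrand is a product of at most two exponentials times the weight~$1$ or~$1+t$, which integrates in closed form. At the outset I would record that~$\Ca\lcwz \le 1/2$ and~$\Cc\lcwz \le (N-1)/2$ give~$\Nh - 1 = N - \Cc\lcwz - 1 \ge (N-1)/2 > 0$ and~$N - \Ca\lcwz \ge N - 1/2 > 1$; these positivity facts are what make every integral converge and produce decay in~$s$ at the two rates~$1$ and~$\Nh$ occurring in~$\Qw{\Nh}{1}(e^{-s})$.

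For~$s \le 0$ I would split~$(-\infty,0)$ at~$t = s$ and keep~$(0,\infty)$ whole: on~$(-\infty,s)$ one has~$\Awt(s,t) = e^{\Nh(t-s)}$, while on~$(s,0)$ and on~$(0,\infty)$ one has~$\Awt(s,t) \le e^{\Ca\lcwz(t-s)}$, so each of the resulting elementary integrals is at most a constant times~$e^{-s} = \Qw{\Nh}{1}(e^{-s})$, the constant being a combination of~$1/(\Nh-1)$,~$1/(1-\Ca\lcwz)$,~$1/(N-\Ca\lcwz)$ and~$\int_0^\infty(1+t)e^{-(N-\Ca\lcwz)t}\,dt$ — finite, independent of~$s$, and bounded as~$\lcwz \to 0$. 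For~$s > 0$ I would instead split~$(0,\infty)$ at~$t = s$; the pieces over~$(-\infty,0)$ and~$(s,\infty)$ are treated exactly as before and give constants times~$e^{-\Nh s}$, whereas over~$(0,s)$ one has~$t < s$, hence~$\Awt(s,t) = e^{\Nh(t-s)}$ and~$e^{-Nt}\Awt(s,t) = e^{-\Nh s}e^{-\Cc\lcwz t}$, so that
\[
\int_0^s (1+t)\, e^{-Nt}\, \Awt(s,t)\, dt = e^{-\Nh s}\int_0^s (1+t)\, e^{-\Cc\lcwz t}\, dt
\le e^{-\Nh s}\min\Bigl\{\, \int_0^s (1+t)\, dt,\ \int_0^\infty (1+t)\, e^{-\Cc\lcwz t}\, dt \,\Bigr\}.
\]
The second entry of the minimum is finite for every fixed~$\lcwz$, which gives the uniform boundedness of~$\Clcwzs$ in~$s$; the first entry grows only polynomially in~$s$ and has a finite limit as~$\lcwz \to 0$, which is exactly how the~$s$-dependence recorded in~(\ref{eq:lcwz_zero}) arises.

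Finally I would assemble: adding the elementary bounds gives~(\ref{eq:sol_in_Xp_cond}) and~(\ref{eq:sol_in_Yonez_cond}) with a function~$\Clcwzs$ that is finite for all~$(\lcwz,s)$, bounded in~$s$ for each fixed~$\lcwz$ (because every integral over an unbounded~$t$-interval is damped by a factor~$e^{-\Cc\lcwz t}$ or~$e^{-(N-\Ca\lcwz)t}$), and whose~$\limsup$ as~$\lcwz \to 0$ follows by letting~$\lcwe \to 0$,~$\Nh \to N$,~$\Cc\lcwz \to 0$ in the explicit constants, yielding~(\ref{eq:lcwz_zero}). I expect the only delicate point to be the region~$s > 0$,~$0 < t < s$: there the target rate~$\Nh$ is strictly smaller than the rate~$N$ carried by~$e^{-Nt}$, so the surplus factor~$e^{-\Cc\lcwz t}$ cannot be discarded and must be integrated carefully enough to extract the~$\min\{s,\, 1/(\Cc\lcwz)\}$-type bound, which is precisely what simultaneously delivers the uniform-in-$s$ estimate and the correct order of growth as~$\lcwz \to 0$. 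Every other region decays strictly faster and is routine.
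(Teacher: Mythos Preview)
Your approach is correct and essentially the same as the paper's: both split by the sign of~$s$, subdivide the~$t$-integral at~$t=0$ and~$t=s$, and integrate each elementary piece; the paper occasionally keeps the exact factor~$D(s,t)=\exp\bigl(\Ca\int_s^t\lcwe\bigr)$ and integrates by parts where you instead apply the cruder bound~$D(s,t)\le e^{\Ca\lcwz(t-s)}$, but this is a harmless simplification yielding the same constants up to fixed numerical factors. Your identification of the region~$0<t<s$ (for~$s>0$) as the sole source of the~$s$-dependent constant, together with the~$\min$-trick on~$\int_0^s(1+t)e^{-\Cc\lcwz t}\,dt$ to extract simultaneously the uniform-in-$s$ bound for fixed~$\lcwz$ and the polynomial growth as~$\lcwz\to 0$, is exactly what the paper does by computing that integral in closed form.
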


\begin{proof}
Using notation from Section~\ref{s:aux_eq}, we 
show~(\ref{eq:sol_in_Xp_cond}) first. 
Proceeding similarly with the proof of Lemma~\ref{l:est_kern_Ku}, we consider two cases.

First, let~$s \leq 0$. The left-hand side in~(\ref{eq:sol_in_Xp_cond}) is given by
\begin{equation}
\label{eq:sol_in_Xp_cond_sm}
\begin{aligned}
&\int_{-\infty}^s e^{-t} e^{\Nh(t-s)} \, dt + \int_s^0 e^{-t} \intl(s,t) \, dt
+ \int_0^{\infty} e^{-Nt} D(s,t) \, dt\\ 
& \qquad \leq 
\frac{e^{-s}}{\Nh - 1} + 2 e^{-s} + \frac{2}{N} D(s,0)
\leq  C e^{-s},
\end{aligned}
\end{equation}
where we exploited that
\[
\Ca \int_s^0 \lcwe(\nu) \, d\nu \leq - \Ca \lcwz s \leq -s, \qquad s < 0,
\]
so~$D(s,0) \leq e^{-s}$. The constant~$C$ in~(\ref{eq:sol_in_Xp_cond_sm}) only depends on~$N$ and~$p$.

Let~$s \geq 0$. In the same manner as above,  
the left-hand side in~(\ref{eq:sol_in_Xp_cond}) is bounded by
\[
\begin{aligned}
&\int_{-\infty}^0 e^{-t} e^{\Nh(t-s)} \, dt + \int_0^s e^{-Nt} e^{\Nh(t-s)} \, dt
+ \int_s^{\infty} e^{-Nt} D(s,t) \, dt\\ 
& \qquad \leq 
\frac{e^{-\Nh s}}{\Nh - 1} + \frac{e^{-\Nh s} - e^{-Ns}}{N - \Nh} + \frac{2}{N} e^{-Ns} 
\leq \left( \frac{4}{N-1} + \frac{1 - e^{-\Cc\lcwz s}}{\Cc\lcwz} \right) e^{-\Nh s},
\end{aligned}
\]
which completes the proof of~(\ref{eq:sol_in_Xp_cond}) since
\[
\limsup_{\lcwz \rightarrow 0} \left( \frac{4}{N-1} + \frac{1 - e^{-\Cc\lcwz s}}{\Cc\lcwz} \right)
\leq C(1+s). 
\]

To prove~(\ref{eq:sol_in_Yonez_cond}), we proceed similarly.
Let~$s \leq 0$. Then the left-hand side in~(\ref{eq:sol_in_Yonez_cond}) is given by
\[
\int_{-\infty}^{s} e^{-t} e^{\Nh(t-s)} \, dt + \int_s^{0} e^{-t} \intl(s,t) \, dt 
+ \int_0^{\infty} (1+t)e^{-Nt}D(s,t) \, dt.
\]
The first two terms can be estimated in the same manner as~(\ref{eq:sol_in_Xp_cond_sm}): 
\[
\int_{-\infty}^{s} e^{-t} e^{\Nh(t-s)} \, dt + \int_s^{0} e^{-t} \intl(s,t) \, dt \\
\leq
\frac{e^{-s}}{\Nh - 1} + 2 e^{-s}.
\]
To investigate the third term, we use integration by parts:
\begin{equation}
\label{eq:kpp1_pi1}
\begin{aligned}
\int_0^{\infty} (1+t)e^{-Nt}D(s,t) \, dt
&=
\int_0^{\infty} (1+t) \frac{1}{N - \Ca \lcwe(t)} \biggl( - \frac{\partial}{\partial t} e^{-Nt} \intl(s,t) \biggr) \, dt\\
&\leq
\frac{2}{N} \int_0^{\infty} (1+t) \biggl( - \frac{\partial}{\partial t} e^{-Nt} \intl(s,t) \biggr) \, dt\\
&\leq
\frac{2}{N} \intl(s,0) +
\frac{4}{N^2} \int_0^{\infty} \biggl( - \frac{\partial}{\partial t} e^{-Nt} \intl(s,t) \biggr) \, dt\\
&\leq C \intl(s,0),
\end{aligned}
\end{equation}
where~$C$ only depends on~$N$ and~$p$.
Since~$\intl(s,0) \leq e^{-s/2} \leq e^{-s}$, we obtain that~(\ref{eq:sol_in_Yonez_cond}) holds for~$s \leq 0$. 

Suppose that~$s \geq 0$. We proceed analogously with the case when~$s \leq 0$. The left-hand side of~(\ref{eq:sol_in_Yonez_cond})
is given by
\[
\int_{-\infty}^{0} e^{-t} e^{\Nh(t-s)} \, dt + \int_{0}^s (1+t) e^{-Nt} e^{\Nh(t-s)} \, dt 
+ \int_s^{\infty} (1+t)e^{-Nt}D(s,t) \, dt.
\]
The first term can be calculated as
\[
\int_{-\infty}^{0} e^{-t} e^{\Nh(t-s)} \, dt  = \frac{e^{-\Nh s}}{\Nh - 1}.
\]
The other terms can be bounded in the same manner as~(\ref{eq:kpp1_pi1}) above using integration by parts:
\[
\begin{aligned}
\int_{0}^s (1+t) e^{-Nt} e^{\Nh(t-s)} \, dt &= e^{-\Nh s}  \int_0^s (1+t)e^{-\Cc \lcwz t} \, dt \\
&= \left( \frac{1 - (1+s)e^{-\Cc \lcwz s}}{\Cc \lcwz} + \frac{1 - e^{-\Cc\lcwz s}}{\Cc^2\lcwz^2} \right) e^{-\Nh s}
\end{aligned}
\]
and
\[
\int_s^{\infty} (1+t)e^{-Nt}D(s,t) \, dt \leq \frac{2}{N}(1+s)e^{-Ns} + \frac{4}{N^2} e^{-Ns}.
\]
Since
\[
\limsup_{\lcwz \rightarrow 0} 
	\left( \frac{1 - (1+s)e^{-\Cc \lcwz s}}{\Cc \lcwz} + \frac{1 - e^{-\Cc\lcwz s}}{\Cc^2\lcwz^2} \right) 
	\leq C(1+s), \quad s > 0,
\]
and
\[
(1+s) e^{-Ns} \leq \frac{C}{\Cc \lcwz} e^{-\Nh s}, \quad s > 0,
\]
it follows that~(\ref{eq:sol_in_Yonez_cond}) and~(\ref{eq:lcwz_zero}) hold for~$s \geq 0$ as well.
\end{proof}

\begin{lemma}
\label{l:sol_in_Xp}
Let\/~$u \in \Lloc[p]$ satisfy\/~$\Np{u}{e^{-t}} \leq C \, \lu(t)$ for\/~$t \in \R$
and suppose that\/~$\Ca \lcwz \leq 1/2$ and\/~$\Cc \lcwz \leq (N-1)/2$.
If\/~$f \in \Wloc$ satisfies
\begin{equation}
\label{eq:cond_u_Xp_simp}
\int_0^{\infty} \Qw{\Nh}{1} (\rho) \, \Np{\nabla f}{\rho}
		\frac{d\rho}{\rho} < \infty \text{,}
\end{equation}
then\/~$u \in \Xp$ and\/  
$(\Ipsi -\S)u \in \Yonez$.
\end{lemma}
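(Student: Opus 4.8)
\emph{Plan.} Throughout set $g=(\Ipsi-\S)u$ and $\f(s)=\Np{\nabla f}{e^{-s}}$ for $s\in\R$. Since $\Ca\lcwz\le 1/2$ one has $\exp\bigl(\Ca\int_s^0\lcwe(\nu)\,d\nu\bigr)\le e^{-s}$ for $s<0$, so assumption~\eqref{eq:cond_u_Xp_simp}, rewritten as $\int_{-\infty}^0 e^{-s}\f(s)\,ds+\int_0^\infty e^{-Ns}\f(s)\,ds<\infty$, implies~\eqref{eq:req_lu}; thus $\lu$ is finite and $\lu(t)=\Cb\int_{-\infty}^\infty\Awt(s,t)\,\f(s)\,ds$. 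To obtain $u\in\Xp$, change variables $\rho=e^{-t}$ in the norm~\eqref{eq:defXp}, use the hypothesis $\Np{u}{e^{-t}}\le C\,\lu(t)$, and apply Tonelli's theorem:
\[
\|u\|_{\Xp}=\int_{-\infty}^\infty\Qw{N}{1}(e^{-t})\,\Np{u}{e^{-t}}\,dt\le C\Cb\int_{-\infty}^\infty\f(s)\Bigl(\int_{-\infty}^\infty\Qw{N}{1}(e^{-t})\,\Awt(s,t)\,dt\Bigr)ds.
\]
By~\eqref{eq:sol_in_Xp_cond} the inner integral is at most $\Clcwzs\,\Qw{\Nh}{1}(e^{-s})$, and for our fixed $\lcwz$ the quantity $\Clcwzs$ is bounded uniformly in $s$ (Lemma~\ref{l:sol_in_Xp_estimates}); undoing the substitution, the right-hand side is a multiple of $\int_0^\infty\Qw{\Nh}{1}(\rho)\,\Np{\nabla f}{\rho}\,\frac{d\rho}{\rho}$, finite by~\eqref{eq:cond_u_Xp_simp}. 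Hence $u\in\Xp$. In particular $\Ipsi u=\Ie(\psif u)$ and $\S u$ are absolutely convergent integrals, $\Ipsi u\in\Wloc$ by Theorem~\ref{t:cont_I1}, and, since $|\Phi(x)-\Phi(y)|\ge|x-y|$, $|\S u|\le\Ie(\dSyw\,|u|)\le C\,\Ie(|u|)\in\Wloc$; so $g\in\Wloc$ and $\nabla g$ is given by~\eqref{eq:diff_S_I_1u}.

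Next I would prove $g\in\Yonez$. As $u\in\Xp$, Theorem~\ref{t:est_diff_I_1_S} — extended from $\Ckc$ to $\Xp$ by the density arguments used for~\eqref{eq:diff_S_I_1u} and in Lemma~\ref{l:ext_to_B} — gives $\Np{\nabla g}{r}\le C\int_0^\infty\Ewt[r]{\rho}\,\Np{u}{\rho}\,\frac{d\rho}{\rho}$, and in the variables $r=e^{-\tau}$, $\rho=e^{-\sigma}$ this kernel is precisely the function $\Awo(\tau,\sigma)$ of Section~\ref{s:aux_eq}. Inserting $\Np{u}{e^{-\sigma}}\le C\Cb\int\Awt(s,\sigma)\f(s)\,ds$, using Tonelli, and then inequality~\eqref{est:ES1} from the proof of Lemma~\ref{l:est_kern_Ku} together with $\lcwe(\tau)\le\lcwz$, one gets $\Np{\nabla g}{e^{-\tau}}\le C\lcwz\int_{-\infty}^\infty\Awt(s,\tau)\,\f(s)\,ds$. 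Writing the seminorm~\eqref{eq:y1p0ineq} of $g$ in the variable $\tau=-\log\rho$ as $\int_{-\infty}^0 e^{-\tau}\Np{\nabla g}{e^{-\tau}}\,d\tau+\int_0^\infty(1+\tau)e^{-N\tau}\Np{\nabla g}{e^{-\tau}}\,d\tau$, substituting the last bound, applying Tonelli, and recognising the $\tau$-integral as the left-hand side of~\eqref{eq:sol_in_Yonez_cond}, this seminorm is at most $C\lcwz\int\f(s)\Clcwzs\Qw{\Nh}{1}(e^{-s})\,ds<\infty$ exactly as before. Since $\Qw{N}{0}(\rho)$ is dominated by the weights occurring in~\eqref{eq:y1p0ineq} on the ranges $\rho\le1$ and $\rho>1$, the same estimate yields $\int_0^\infty\Qw{N}{0}(\rho)\,\Np{\nabla g}{\rho}\,\frac{d\rho}{\rho}<\infty$; combined with $g\in\Wloc$ (whence $g\in L^1_{\mathrm{loc}}(\R^N\setminus\{0\})$, so $\bigl|\int_{1\le|x|<2}g\bigr|<\infty$) this places $g$ in $\Yone$ and verifies~\eqref{eq:y1p0ineq}.

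What remains — and is where the real work lies — is $\lim_{r\to\infty}\int_{S^{N-1}}g(r\theta)\,d\theta=0$. From the previous paragraph $\int_1^\infty\Np{\nabla g}{\rho}\,d\rho<\infty$, and a routine H\"older-on-the-sphere plus Fubini argument upgrades this to $\int_1^\infty\int_{S^{N-1}}|\nabla g(r\theta)|\,d\theta\,dr<\infty$; hence the absolutely continuous function $h(r)=\int_{S^{N-1}}g(r\theta)\,d\theta$, whose derivative satisfies $|h'(r)|\le\int_{S^{N-1}}|\nabla g(r\theta)|\,d\theta$, has a finite limit $\ell$ as $r\to\infty$. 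On the other hand $|\S u|\le C\,\Ie(|u|)$ and $\Np{\Ipsi u}{r}\le C\,\Np{\Ie(|u|)}{r}$, while by~\eqref{eq:Np_est_I_1} (applied to $|u|$) $\Np{\Ie(|u|)}{r}\le C_1 r\int_0^\infty\Qw{N}{1}(\rho/r)\,\Np{u}{\rho}\,\frac{d\rho}{\rho}$, which tends to $0$ as $r\to\infty$ by a straightforward splitting argument using $u\in\Xp$; therefore $\Np{g}{r}\to0$. Finally $\bigl|\tfrac1r\int_r^{2r}h(s)\,ds\bigr|\le\tfrac1{r^N}\int_{r\le|x|<2r}|g(x)|\,dx\le C\,\Np{g}{r}\to0$, which forces $\ell=0$ and completes the proof that $(\Ipsi-\S)u\in\Yonez$. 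I expect the two norm estimates to be mostly bookkeeping, once the logarithmic substitution turns Lemma~\ref{l:sol_in_Xp_estimates} and~\eqref{est:ES1} into the precise tools required; the genuine obstacle is this final step — showing that $g$, and not merely the integrability of $\nabla g$, satisfies the mean-vanishing-at-infinity condition built into $\Yonez$.
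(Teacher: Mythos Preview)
Your proof is correct and follows essentially the same route as the paper's: after passing to logarithmic variables, both arguments use~\eqref{eq:sol_in_Xp_cond} and~\eqref{eq:sol_in_Yonez_cond} of Lemma~\ref{l:sol_in_Xp_estimates} to control the $\Xp$-norm of~$u$ and the $\Yonez$-seminorm of $g=(\Ipsi-\S)u$, with the intermediate bound $\Np{\nabla g}{e^{-\tau}}\le C\int\Awt(s,\tau)\f(s)\,ds$ obtained via Theorem~\ref{t:est_diff_I_1_S} and~\eqref{est:ES1} (the paper phrases this step as $\Np{\nabla g}{r}\le C\lu(-\log r)$ via $\KK\lu\le\lu$). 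For the vanishing spherical mean, both show $\Np{g}{r}\to 0$ from~\eqref{eq:Np_est_I_1} and the same splitting argument; you then average $h$ over $[r,2r]$ while the paper takes the infimum, which are interchangeable here.
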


\begin{proof}
Using notation from Section~\ref{s:aux_eq}, we see that
\begin{equation}
\begin{aligned}
\| u \|_{\Xp} &= \int_0^{\infty} \Qw{N}{1}(\rho) \, \Np{u}{\rho} \, \frac{d\rho}{\rho}
= \int_{-\infty}^{\infty} \Qw{N}{1}(e^{-t}) \, \Np{u}{e^{-t}} \, dt\\
&\leq C \int_{-\infty}^{\infty} \Qw{N}{1}(e^{-t}) 
	\int_{-\infty}^{\infty} \Awt(s,t) \f(s) ds \, dt\\
&= C \int_{-\infty}^{\infty} \f(s) 
	\int_{-\infty}^{\infty} \Qw{N}{1}(e^{-t}) \Awt(s,t) dt \, ds.
\end{aligned}
\end{equation}
Inequality~(\ref{eq:sol_in_Xp_cond}) now implies that~$u \in \Xp$.

Turning our attention to the second part of the lemma, i.e., that~$(\Ipsi - \S) u$ belongs to~$\Yonez$, we 
need to prove two things:
\begin{equation}
\label{eq:cond_ipsi_yonez}
\int_0^{\infty} H(\rho) \, \Np{\nabla (\Ipsi - \S)u}{\rho} \, \frac{d\rho}{\rho} < \infty
\end{equation}
and
\[
h_R(r) \rightarrow 0, \quad \text{as } r \rightarrow \infty,
\]
where~$H(\rho) = \rho^N(1 - \log \rho)$ if~$0 < \rho \leq 1$ and~$H(\rho) = \rho$ if~$\rho \geq 1$,
and the function~$h(x) = (\Ipsi - \S)u(x)$ for~$x \in \R^N$.

We know that~$\partial_k (\Ipsi - \S)u$,~$k=1,2,\ldots,N$, are defined and the 
representation in~(\ref{eq:diff_S_I_1u})
holds. Moreover, by Lemma~\ref{l:ext_to_B} it is possible to 
extend~(\ref{eq:est_diff_I_1_S}) for all~$u$ in~$\Xp$. 
Hence, since~$\Np{u}{e^{-t}} \leq C \lu(t)$ and~$\KK \lu \leq \lu$,
\[
\Np{\nabla(\Ipsi - \S)u}{r} \leq \KK(\Np{u}{\, \cdot \,})(r) \leq 
C \KK \lu (r) \leq C \lu(r), \quad r > 0,
\]
which implies that
\[
\begin{aligned}
\int_0^{\infty} H(r) \, \Np{\nabla (\Ipsi - \S)u}{r} \, \frac{dr}{r} &\leq
C \, \int_{-\infty}^{\infty} H(e^{-t}) \int_{-\infty}^{\infty} \f(s) \, \Awt(s,t) \, ds \, dt\\
&\leq
C \, \int_{-\infty}^{\infty} \f(s) \int_{-\infty}^{\infty} H(e^{-t}) \, \Awt(s,t) \, dt \, ds.\\
\end{aligned}
\]
Since~(\ref{eq:sol_in_Yonez_cond}) holds, it follows that~(\ref{eq:cond_ipsi_yonez}) is valid.

We will now verify that
\begin{equation}
\label{eq:lim_hr}
\lim_{r \rightarrow \infty} h_R(r) = \lim_{r \rightarrow \infty} \int_{S^{N-1}} h(r\theta) \, d\theta = 0 \text{,}
\end{equation}
where~$h(x) = (\Ipsi - \S)u(x)$,~$x \in \R^N$. 
By Lemma~2.4 in~\cite{thim1},
the fact that
\[
\int_{1}^{\infty} \Np{\nabla h}{\rho} \, d\rho < \infty 
\]
implies that the limit in~(\ref{eq:lim_hr}) exists.
We now obtain that
\[
\begin{aligned}
\inf_{r \leq \rho < 2r} |h_R(\rho)| &\leq \frac{1}{r} \int_r^{2r} |h_R(\rho)| \, d\rho\\
&\leq \frac{1}{r^N} \int_r^{2r} \rho^{N-1} \int_{S^{N-1}} |h(\rho\theta)| \, d\theta \, d\rho\\
& \leq C \, r^{-N/p} \biggl( 
        \int_{r \leq |x| < 2r} |h(x)|^p \, dx 
 \biggr)^{1/p}\\
&= C \, \Np{h}{r} \text{.}
\end{aligned}
\]
Obviously~$|h(x)| \leq C \, \Ipsi |u|(x)$, so~(\ref{eq:Np_est_I_1}) in Theorem~\ref{t:cont_I1} implies that
\[
\begin{aligned}
\Np{h}{r} \leq C \biggl( 
&r^{1-N} \int_0^1 \rho^{N} \Np{u}{\rho} \, \frac{d\rho}{\rho}
+ r^{1-N} \int_1^r \rho^{N} \Np{u}{\rho} \, \frac{d\rho}{\rho}\\
&+ \int_r^{\infty} \Np{u}{\rho} \, d\rho
\biggr) 
\end{aligned}
\]
for~$r > 1$. Since~$u \in \Xp$,  
the first and last term in the right-hand side tend to zero as~$r \rightarrow \infty$. To show
that this is also true for the middle term, let~$m > 1$ be fixed. Then
\[
\limsup_{r \rightarrow \infty} r^{1-N} \int_1^r \rho^{N} \, \Np{u}{\rho} \, \frac{d\rho}{\rho}
\leq \int_m^{\infty} \Np{u}{\rho} \, d\rho \text{.}
\]
The number~$m$ is arbitrary, so the limit must be zero 
since~$u \in \Xp$.
Thus,~$h_R(\rho_n) \rightarrow 0$ for some sequence~$\rho_n$ such that~$\rho_n \rightarrow \infty$.
Hence~(\ref{eq:lim_hr}) must hold, so~$(\Ipsi - \S)u \in \Yonez$.
\end{proof}

\subsection{Proof of Theorem~\ref{t:i:exist}}
\label{s:proof_exist}
%
If~$f \in \Yonew$, then the solution~$z$ to~(\ref{eq:z})
exists.
Applying Lemma~\ref{l:exist_K}, we obtain a fixed point~$u \in \Lloc[p]$ of~$\K$ such
that
\[
\Np{u}{r} \leq z(r), \quad  r > 0.
\]
Lemma~\ref{l:sol_in_Xp} shows that~$u \in \Xp$
and that the function~$(\Ipsi - \S)u$ belongs to~$\Yonez$. 
Clearly,~$f \in \Yonew$ implies that~$f \in \Yonez$,
so Theorem~\ref{t:inv} proves that
\[
\Ipsi u = \Ipsi \K(u) = \Ipsi \Ri ((\Ipsi - \S)u + f) = \Ipsi u - \S u + f \text{,}
\]
or equivalently, that~$\S u = f$. 

\begin{corollary}
\label{c:cont_req_sol}
If\/~$\lcwz \rightarrow 0$, then the condition that\/~$f \in \Yonew$ reduces to\/~$f \in \Yonez$. 
\end{corollary}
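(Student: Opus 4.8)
The plan is to show that for every $\lcwz$ with $\Cc\lcwz \le 1$ — in particular whenever $\lcwz \le \lcwm$ — one has the continuous inclusion $\Yonew \subseteq \Yonez$, and then to observe that the single point of discrepancy between the two definitions disappears as $\lcwz \to 0$. That discrepancy occurs only near the origin: the defining condition of $\Yonew$ uses the weight $\Qw{M}{1}(\rho) = \rho^{M}$ for $0 < \rho \le 1$, where $M = N - \Cc\lcwz$, whereas the defining condition of $\Yonez$ uses the weight $\rho^N(1-\log\rho)$ there; for $\rho \ge 1$ both spaces use the weight $\rho$. So the whole matter comes down to comparing a power weight against a logarithmic one.

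The only calculation needed is that comparison on $(0,1]$. Writing $t = -\log\rho \ge 0$ and using $e^x \ge 1+x$ together with $\Cc\lcwz \le 1$,
\[
\rho^{-\Cc\lcwz} = e^{\Cc\lcwz t} \ge 1 + \Cc\lcwz t \ge \Cc\lcwz(1+t) = \Cc\lcwz(1-\log\rho),
\]
hence $\rho^N(1-\log\rho) \le (\Cc\lcwz)^{-1}\Qw{M}{1}(\rho)$ for $0 < \rho \le 1$. Consequently, for $f \in \Yonew$,
\[
\int_0^1 \rho^N(1-\log\rho)\,\Np{\nabla f}{\rho}\,\frac{d\rho}{\rho} + \int_1^\infty \Np{\nabla f}{\rho}\,d\rho
\;\le\; \frac{1}{\Cc\lcwz}\int_0^\infty \Qw{M}{1}(\rho)\,\Np{\nabla f}{\rho}\,\frac{d\rho}{\rho} \;<\; \infty,
\]
which is exactly the finiteness of the $\Yonez$-norm of $f$. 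The remaining requirements for $f \in \Yonez$ follow at once: since $\Qw{M}{1}(\rho) \ge \rho^N = \Qw{N}{0}(\rho)$ on $(0,1]$ and $\Qw{M}{1}(\rho) = \rho \ge 1 = \Qw{N}{0}(\rho)$ for $\rho \ge 1$, the same bound gives $\int_0^\infty \Qw{N}{0}(\rho)\,\Np{\nabla f}{\rho}\,\frac{d\rho}{\rho} < \infty$, and $f \in \Wloc$ forces $f \in L^1$ on the annulus $1 \le |x| < 2$, so $\bigl|\int_{1\le|x|<2} f\,dx\bigr| < \infty$; thus $f \in \Yone$. Finally, the limit condition $\lim_{r\to\infty}\int_{S^{N-1}} f(r\theta)\,d\theta = 0$ is part of the definition of both $\Yonew$ and $\Yonez$. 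Hence $f \in \Yonez$, with $\| f \|_{\Yonez} \le (\Cc\lcwz)^{-1}\| f \|_{\Yonew}$.

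For the reduction itself, I would note that as $\lcwz \to 0$ the exponent $M = N - \Cc\lcwz$ increases to $N$, so $\Qw{M}{1}(\rho) \uparrow \Qw{N}{1}(\rho)$ pointwise; since $\rho^N \le \rho^N(1-\log\rho)$ on $(0,1]$, every $f \in \Yonez$ satisfies the $\lcwz = 0$ limiting form of the defining condition of $\Yonew$, while $\Yonew \subseteq \Yonez$ for each admissible $\lcwz > 0$ by the above. In this sense the hypothesis $f \in \Yonew$ in Theorem~\ref{t:i:exist} collapses, as $\lcwz \to 0$, to the hyperplane hypothesis $f \in \Yonez$ used in Theorem~\ref{t:inv}. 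There is no genuine obstacle in the argument; the only subtle point is that the comparison constant $(\Cc\lcwz)^{-1}$ blows up as $\lcwz \to 0$, which is precisely why the statement has to be phrased as a reduction in the limit rather than as a uniform norm equivalence.
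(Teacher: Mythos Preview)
Your inclusion $\Yonew \subseteq \Yonez$ for each admissible $\lcwz>0$, with constant $(\Cc\lcwz)^{-1}$, is correct and nicely elementary. But this comparison of norms does not by itself yield the corollary as the paper means it. The pointwise limit of the $\Yonew$-weight $\Qw{M}{1}(\rho)$ as $\lcwz\to 0$ is $\Qw{N}{1}(\rho)=\rho^N$ on $(0,1]$, which defines a space \emph{strictly larger} than $\Yonez$; your sandwich ($\Yonew\subseteq\Yonez$ for $\lcwz>0$, while $\Yonez\subseteq Y^{1,p}_N$ at $\lcwz=0$) therefore does not single out $\Yonez$ as the limiting hypothesis --- any space between $\bigcup_{\lcwz>0}\Yonew$ and $Y^{1,p}_N$ would be equally ``sandwiched''. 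The blowing-up constant you observed is the symptom of this, not its resolution.

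The paper's argument is different in kind. It does not compare the spaces directly; instead it returns to the estimates \eqref{eq:sol_in_Xp_cond} and \eqref{eq:sol_in_Yonez_cond} of Lemma~\ref{l:sol_in_Xp_estimates}, whose right-hand side is $\Clcwzs\,\Qw{M}{1}(e^{-s})$. For fixed $\lcwz>0$ the factor $\Clcwzs$ is uniformly bounded in $s$, which is why the sufficient condition in Lemma~\ref{l:sol_in_Xp} (and hence in Theorem~\ref{t:i:exist}) can be stated as $f\in\Yonew$. But by \eqref{eq:lcwz_zero}, $\limsup_{\lcwz\to 0}\Clcwzs\le C(1+s)$ for $s>0$, so the \emph{effective} weight in the limit is $(1+s)e^{-Ns}$ for $s>0$ and $e^{-s}$ for $s\le 0$ --- which, after the substitution $\rho=e^{-s}$, is precisely the $\Yonez$-weight $\rho^N(1-\log\rho)$ on $(0,1]$ and $\rho$ on $[1,\infty)$. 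This is where the logarithmic factor comes from, and it is what makes the sufficient condition of the existence proof collapse exactly to $f\in\Yonez$ rather than to the weaker $f\in Y^{1,p}_N$. Your argument misses this mechanism.
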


\noindent In other words, we recover the authors' previous result (see Theorem~\ref{t:inv}).

\begin{proof}
Letting~$\lcwz \rightarrow 0$ in~(\ref{eq:sol_in_Xp_cond}) and~(\ref{eq:sol_in_Yonez_cond})
and using~(\ref{eq:lcwz_zero}) shows that the right-hand sides of~(\ref{eq:sol_in_Xp_cond})
and~(\ref{eq:sol_in_Yonez_cond}) tend to~$C (1+s) \Qw{N}{1}(e^{-s})$ if~$s > 0$
and~$C \Qw{N}{1}(e^{-s})$ if~$s \leq 0$. 
The condition in~(\ref{eq:cond_u_Xp_simp}) in Lemma~\ref{l:sol_in_Xp} can now be replaced by
\[
\int_0^1 (1-\log \rho)\rho^N \Np{\nabla f}{\rho} \, \frac{d\rho}{\rho}
+ \int_1^{\infty} \Np{\nabla f}{\rho} \, d\rho < \infty.
\]
\end{proof}

\subsection{Uniqueness of Solutions: Proof of Theorem~\ref{t:i:uniq}}
\label{s:proof_uniq}

%
We show that~$\S u = 0$ implies that~$\K(u) = u$ and use Theorem~\ref{t:uniq_K} to
deduce that~$u = 0$.

Let~$\Bs$ be the Banach space introduced in the proof of Lemma~\ref{l:ext_to_B} and pick a 
sequence~$u_n \in \Ckc$ such that~$u_n \rightarrow u$ in~$\Bs$. It is clear
that~$u_n \rightarrow u$ in~$\Lloc[p]$. 
The definition of~$\K$ implies that
\[
\K(u_n) = \Ri (\Ie - \S)u_n \rightarrow \K(u)
\]
in~$\Lloc[p]$. 
It is straightforward to verify that~$\Ri \Ie u_n = u_n$, for example by means of the Fourier transform. 
As in Lemma~\ref{l:ext_to_B}, we extend the operator~$\Ri \S$ to~$\Bs$ so that
\begin{equation}
\label{eq:est_RS}
\Np{\Ri \S u}{r} \leq C \int_0^{\infty} \int_0^{\infty}  E(r,\rho,\sigma) \, \Np{u}{\sigma} \, \frac{d\sigma}{\sigma} \, \frac{d\rho}{\rho}
\end{equation}
for every~$u \in \Bs$. Clearly~$\Ri \S u = 0$ if~$u \in \Bs$ satisfies~$\S u = 0$. 
From~(\ref{eq:est_RS}), we also
obtain that~$\Ri \S u_n \rightarrow \Ri \S u$ in~$\Lloc[p]$.
Hence
\[
\K(u_n) = \Ri (\Ie - \S) u_n = \Ri \Ie u_n - \Ri \S u_n = u_n - \Ri \S u_n \rightarrow u - \Ri \S u = u
\]
in~$\Lloc[p]$. By uniqueness of limits, we have~$\K(u) = u$.

Now, Theorem~\ref{t:uniq_K} states that the solutions to~$\K(u) = u$ that 
satisfy~(\ref{eq:cond_uniq}) are unique. Since~$u = 0$ is one such solution,
we must conclude that~$u = 0$ is the only possibility.


\newcommand{\coms}{\ensuremath{\left[ \S, \eta \right] \! u}}
\newcommand{\comspd}{\ensuremath{\partial_k \left[ \S, \eta \right] \! u}}
\newcommand{\comsd}{\ensuremath{\left[ \partial_k \S, \eta \right] \! u}}
\newcommand{\comsdd}{\ensuremath{\left[ \nabla \S, \eta \right] \! u}}

\newcommand{\comco}{\ensuremath{\chi}}
\newcommand{\comcomp}{\ensuremath{\Psi}}

\section{Local Estimates}
\label{s:assymp}
Let~$r_0$ be positive and let~$\eta \in C^{\infty}(\R)$ be a 
cut-off function such that~$\eta(r) = 1$ if~$r < r_0$ and~$\eta(r) = 0$ if~$r > 2r_0$.
Let~$u \in \Xp$,~$1 < p < \infty$, solve the equation~$\S u(x) = f(x)$ for~$|x| \leq 2r_0$, where~$f \in \Wloc$.
Furthermore, let~$\comco \in C_c^{\infty}(\R)$ satisfy~$\comco(r) = 0$ if~$r < r_0$ or~$r > 2r_0$. We will require that
\[
\int_{0}^{\infty} \comco(\rho) \, \frac{d\rho}{\rho} = \frac{N}{|S^{N-1}|},
\]
where~$|S^{N-1}|$ is the surface measure of the unit sphere in~$\R^N$. 
We define~$\comcomp(y)$ for~$y \in \R^N \setminus \{0\}$ by
\[
\comcomp(y) = - \comco(|y|) \sum_{i=1}^N \gamma_i \frac{y_i}{|y|}, \quad \text{where }  
\gamma_i = \int_{\R^N} \frac{(1-\eta(y))u(y)}{|y|^N} \frac{y_k}{|y|} \, dS(y). 
\]
Here,~$dS(y) = \sqrt{1 + |\nabla \vp(y)|^2} \, dy$.
For~$k=1,2,\ldots,N$, the function~$\comcomp$ satisfies
\begin{equation}
\label{eq:comcomp_kill}
\begin{aligned}
\int_{\R^N} \frac{\comcomp(y) y_k \, dS(y)}{|y|^{N+1}} &= 
	- \sum_{i=1}^N \gamma_i \int_0^{\infty} \comco(\rho)\int_{S^{N-1}} \theta_i \theta_k \, d\theta \, \frac{d\rho}{\rho}
=
-\gamma_k. 
\end{aligned}
\end{equation}
We multiply the equation~$\S u = f$ by~$\eta(|x|)$, and add~$\S (\eta u)$ and~$\S \comcomp$
to both sides and rearrange:
\begin{equation}
\label{eq:Suf_co}
\S (\eta u + \comcomp) = \eta f + \coms + \S \comcomp,
\end{equation}
where~$\coms(x) = \S (\eta u) (x) - \eta(x) \S u (x)$,~$x \in \R^N$. We wish to estimate the
gradient of the right-hand side of~(\ref{eq:Suf_co}).

\begin{lemma}
\label{l:est_Np_comm}
Let\/~$u \in \Xp$,\/~$1 < p < \infty$, and suppose that\/~$r_0 > 0$. Then
\[
\Np{\nabla (\coms + \S \comcomp) }{r} \leq \Biggl\{ 
\begin{aligned}
&C(r + \lcw(r)) \|u\|_{\Xp} , & r \leq r_0,\\
&C r^{-N} \, \|u \|_{\Xp}, & r > r_0,
\end{aligned}
\]
where\/~$C$ depends on\/~$N$,\/~$p$, and\/~$r_0$.
\end{lemma}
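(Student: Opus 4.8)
The plan is to start from the elementary identity $\coms + \S\comcomp = \S(\eta u + \comcomp) - \eta\,\S u$ and to split the estimate of $\nabla(\coms + \S\comcomp)$ according to whether $|x|$ is small, comparable to $r_0$, or large; throughout, implied constants depend only on $N$, $p$, and $r_0$. By the density/extension argument of Section~\ref{s:appr_S}, formula~(\ref{eq:diff_Su}) is valid for every element of $\Xp$, and $u$, $\eta u + \comcomp$, $\comcomp$ all lie in $\Xp$; hence, for $k = 1,\dots,N$,
\[
\partial_k(\coms + \S\comcomp)(x) = (1-N)\bigl( T_k v_x(x) + \partial_k\vp(x)\, T_{N+1} v_x(x) \bigr) - \eta'(|x|)\,\tfrac{x_k}{|x|}\,\S u(x),
\]
where $v_x(y) = \bigl( \eta(|y|) - \eta(|x|) \bigr) u(y) + \comcomp(y)$ and the last term is supported in $\{\, r_0 \le |x| \le 2r_0 \,\}$.

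For $|x| > 2r_0$ one has $\eta(|x|) = 0$, so there $\coms + \S\comcomp = \S(\eta u + \comcomp)$, a single layer potential whose density is supported in $\{\, |y| \le 2r_0 \,\}$; differentiating the now-smooth kernel and using $|\ld[x] - \ld[y]| \asymp |x|$ gives $|\nabla(\coms + \S\comcomp)(x)| \lesssim |x|^{-N}\,\|u\|_{\Xp}$, since the total mass of $\eta u + \comcomp$ is $\lesssim \|u\|_{\Xp}$ (recall $\|u\|_{L^1(\{|y| < 2r_0\})} \lesssim \|u\|_{\Xp}$ and $|\gamma_i| \lesssim \|u\|_{\Xp}$). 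On the bounded annulus $r_0 < |x| < 4r_0$ I would use the $L^p$-boundedness of $T_k$ and $T_{N+1}$ recalled in Section~\ref{s:appr_S}, the bound $\Np{\S u}{r} \lesssim \|u\|_{\Xp}$ for $r \asymp r_0$ (writing $\S = \Ipsi - (\Ipsi - \S)$ and invoking Theorems~\ref{t:cont_I1} and~\ref{t:est_diff_I_1_S}), and the observation that $\eta(|y|) - \eta(|x|) = O\bigl( \bigl|\,|x| - |y|\,\bigr| \bigr) = O(|x-y|)$, which makes the differentiated commutator kernel $O(|x-y|^{1-N})$ and hence bounded on $L^p$ over bounded sets by Young's inequality. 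Combining these, $\Np{\nabla(\coms + \S\comcomp)}{r} \lesssim r^{-N}\,\|u\|_{\Xp}$ for $r > r_0$.

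For $|x| \le r_0/2$ one has $\eta(|x|) = 1$ and $\eta'(|x|) = 0$, so $\coms + \S\comcomp = \S v$ with $v := (\eta - 1)u + \comcomp$ supported in $\{\, |y| \ge r_0 \,\}$, and $\partial_k(\coms + \S\comcomp)(x) = (1-N)\bigl( T_k v(x) + \partial_k\vp(x)\, T_{N+1} v(x) \bigr)$; since the support of $v$ is at distance $\ge r_0/2$ from $\{\, |x| < r_0/2 \,\}$, both $T_k v$ and $T_{N+1} v$ are smooth there. I would estimate three contributions: (i) $|\partial_k\vp(x)\, T_{N+1} v(x)| \le \lcw(|x|/2)\,|T_{N+1} v(x)| \lesssim \lcw(r)\,\lcwz\,\|u\|_{\Xp}$, using that the vertical component of $\ld[x] - \ld[y]$ carries a factor $\lcw$; (ii) $|T_k v(x) - T_k v(0)| \le |x|\,\sup_{|x'| \le r_0/2}|\nabla T_k v(x')| \lesssim |x|\,\|u\|_{\Xp} \asymp r\,\|u\|_{\Xp}$, since $|\nabla_x T_k v(x)| \lesssim \int_{|y| \ge r_0} |v(y)|\,|y|^{-(N+1)}\,dS(y) \lesssim \|u\|_{\Xp}$; and (iii) the $x$-independent constant $T_k v(0)$, which is precisely what $\comcomp$ is designed to kill: evaluating the absolutely convergent integral $T_k v(0)$ and isolating its flat ($\vp\equiv 0$) principal part $-y_k|y|^{-(N+1)}$, the defining identity $\int\comcomp(y)\,y_k\,|y|^{-(N+1)}\,dS(y) = -\gamma_k$ (obtained from $\int_{S^{N-1}}\theta_i\theta_k\,d\theta = \delta_{ik}\,|S^{N-1}|/N$ and the normalisation of $\comco$) together with the definition of the moments $\gamma_i$ makes the flat part of $T_k\bigl((\eta - 1)u + \comcomp\bigr)(0)$ cancel, leaving only curvature remainders — the difference of $|\ld[0]-\ld[y]|^{-(N+1)}$ and $|y|^{-(N+1)}$, and the factor $\dSyw(y)-1$, each carrying a gain in $\lcw$ — which are absorbed into the $\lcw(r)$-term. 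The intermediate range $r_0/2 < r \le r_0$ is covered by the bounded-region estimates of the previous paragraph, whence $\Np{\nabla(\coms + \S\comcomp)}{r} \lesssim (r + \lcw(r))\,\|u\|_{\Xp}$ for $r \le r_0$.

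The main obstacle is step (iii): producing and then using the cancellation built into $\comcomp$. The computation has to be arranged so that, after differentiating $\S v$ through~(\ref{eq:diff_Su}) and evaluating the resulting absolutely convergent integral at the origin, the leading ($x$-independent) constant is expressed purely through the first spherical moments $\gamma_i$ of $(1 - \eta)u$ — which $\comcomp$ annihilates by design — so that only kernel-curvature errors survive. A secondary bookkeeping nuisance is the near-diagonal part of the commutator kernel inside $\{\, r_0 \le |x|,\,|y| \le 2r_0 \,\}$, where the factor $\eta(|y|) - \eta(|x|)$ does not separate the variables but does vanish to first order, which is enough to tame the singularity.
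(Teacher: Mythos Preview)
Your outline follows the paper's proof almost exactly: the same three-region split in $|x|$, the same use of the representation~(\ref{eq:diff_Su}), the $\eta(|y|)-\eta(|x|)=O(|x-y|)$ trick together with the crude bound $|\S u|\le C\,\Ie|u|$ in the transition zone $r\asymp r_0$, and the moment cancellation from $\comcomp$ for small $|x|$. The paper merely packages your mean-value step as a first-order Taylor expansion of the scaled kernel $g(\tau)$ in $\tau=|x|/|y|$ about $\tau=0$, which is equivalent to evaluating $T_kv$ at the origin and bounding the increment.

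One point in~(iii) deserves more care. After the flat principal part cancels, what remains of $T_kv(0)$ is an $x$-independent constant of size $O(\lcwz^2)\|u\|_{\Xp}$, coming from $(1+\Ly^2)^{-(N+1)/2}-1$ and $\dSyw(y)-1$ for $|y|\ge r_0$. Since this number does not depend on $r$, it cannot be ``absorbed into the $\lcw(r)$-term'' when $\lcw(r)\to 0$. The paper's own argument shares this slip: the leading kernel $g(0)$ carries a factor $(1+\Ly^2)^{-(N+1)/2}$ (the printed $\Lx$ appears to be a typo), and the verification of~(\ref{eq:comcomp_kill}) silently replaces $dS(y)$ by $dy$; neither discrepancy is $O(\lcw(r))$. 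A clean repair is to define the moments $\gamma_k$ and the corrector $\comcomp$ using the actual zeroth-order kernel $g(0)$ rather than its flat approximation, so that the integral $\int g(0)\,|y|^{-N}v(y)\,\dSyw(y)\,dy$ vanishes exactly; with that adjustment your plan goes through verbatim and (iii) reduces to the identity $T_kv(0)=0$.
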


\begin{proof}
Let~$0 < r < r_0/2$ and~$r \leq |x| < 2r$. Then~$\comspd = \partial_k \S ((\eta - 1)u)$, and from the representation in~(\ref{eq:diff_Su}) it follows that
\[
\begin{aligned}
\frac{\partial_k \bigl( \left[ \S, \eta \right] \! u + \S \comcomp \bigr)(x)}{1-N}  
= {} & T_k\bigl((1 - \eta)u + \comcomp \bigr)(x) + \partial_k \vp (x) T_{N+1}\bigl( (1-\eta)u + \comcomp \bigr)(x)\\
= {} &
\int_{\R^N} \frac{(x_k - y_k)\bigl( (1 - \eta(y))u(y) + \comcomp(y) \bigr) \, dy}{|\ld[x] - \ld[y]|^{N+1}}\\
& + \partial_k \vp (x) \int_{\R^N} \frac{(\vp(x) - \vp(y))\bigl( (1 - \eta(y))u(y) + \comcomp(y) \bigr) \, dy}{|\ld[x] - \ld[y]|^{N+1}}. 
\end{aligned}
\]
The second term in the right-hand side can be estimated by
\[
C \lcw(r) \int_{|y| > r_0} \frac{  |u(y)| + |\comcomp(y)|}{|y|^N} \, dy
\leq
C \lcw(r)  \int_{|y| > r_0} \frac{|u(y)|}{|y|^N} \, dy 
\]
since
\[
|\comcomp(y)| \leq |\comco(|y|)|  
\sum_{i=1}^N |\gamma_i| \leq C|\comco(|y|)| \int_{|z| > r_0} \frac{|u(z)| \, dz}{|z|^N},  \quad y \in \R^N.
\]
Let~$\tau = |x|/|y|$. We assume that~$\tau \leq 1/2$. The kernel in~$T_k$ can be expressed by
\[
\frac{x_k - y_k}{|\ld[x] - \ld[y]|^{N+1}} = \frac{1}{|y|^N} g(\tau),
\quad \text{where }
g(\tau) = \frac{\frac{x_k}{|x|} \tau - \frac{y_k}{|y|}}{\left|\frac{\ld[x]}{|x|}\tau - \frac{\ld[y]}{|y|} \right|^{N+1}}.
\]
We see that~$g(0) = -(y_k/|y|)(1+\Lx^2)^{-(N+1)/2}$, and also that~$g'(\tau)$ is uniformly 
bounded,~$|g'(\tau)| \leq C$, for~$\tau \leq 1/2$. The constant depends only on~$N$.
Thus,~$|g(\tau) - g(0)| \leq C \tau$ for~$\tau \leq 1/2$. Hence,
\[
\begin{aligned}
|T_k((1-\eta)u + \comcomp)(x)| \leq {} & \frac{1}{(1+\Lx^2)^{\frac{N+1}{2}}} \biggl| \int_{\R^N} \frac{y_k}{|y|} \cdot 
	\frac{(1-\eta(y))u(y) + \comcomp(y)}{|y|^N} \, dS(y) \biggr| \\
& + 
C |x| \int_{|y| > r_0} \frac{|u(y)| + |\comcomp(y)|}{|y|^{N+1}} \, dy.
\end{aligned}
\]
It is clear from~(\ref{eq:comcomp_kill}) that
\[
\int_{\R^N} \frac{y_k}{|y|} \cdot \frac{(1-\eta(y))u(y) + \comcomp(y)}{|y|^N} \, dS(y) = 0, \quad k=1,2,\ldots,N.
\]
Moreover, 
\[
|x|\int_{|y| > r_0} \frac{|u(y)| + |\comcomp(y)|}{|y|^{N+1}} \, dy
\leq
C r \int_{|y| > r_0} \frac{|u(y)|}{|y|^{N}} dy,
\] 
so
\[
\Np{T_k((1-\eta)u + \comcomp)}{r} \leq C \bigl( r + \lcw(r) \bigr) \int_{|y| > r_0} \frac{|u(y)| }{|y|^N} dy.
\]
The right-hand side is finite since~$u \in \Xp$. Thus, we obtain that
\begin{equation}
\label{eq:est_Np_comm_m}
\Np{\nabla \bigl( \left[ \S, \eta \right] \! u + \S \comcomp \bigr)}{r} \leq C \bigl( r + \lcw(r) \bigr) \|u\|_{\Xp},
\quad 0 < r < \frac{r_0}{2}.
\end{equation}
Let~$r > 4r_0$. Then~$\comspd = \partial_k \S(\eta u)$, and
\[
|\comspd(x) | \leq \frac{C}{|x|^N} \int_{|y| < 2r_0} |u(y)| \, dy \leq \frac{C}{|x|^{N}} \|u\|_{\Xp}.
\]
Furthermore,
\[
\begin{aligned}
|\partial_k \S \comcomp (x)| &\leq C |T_k \comcomp (x)| + C |T_{N+1} \comcomp(x)| \\
& \leq \frac{C}{|x|^N} \int_{r_0 \leq |y| \leq 2r_0} |\comcomp(y)| \, dy
\leq \frac{C}{|x|^N} \int_{|y| > r_0} \frac{|u(y)|}{|y|^N} dy. 
\end{aligned}
\]
This implies that
\begin{equation}
\label{eq:est_Np_comm_p} 
\Np{\nabla \bigl( \left[ \S, \eta \right] \! u + \S \comcomp \bigr)}{r} 
\leq C r^{-N} \|u\|_{\Xp}, \quad r > 4r_0.
\end{equation}
If~$r_0/2 \leq r \leq 4r_0$, then
\[
\begin{aligned}
\comspd(x) &= \partial_k \S \eta u(x) - \eta'(|x|) \frac{x_k}{|x|} \S u (x) - \eta(|x|) \partial_k \S u(x)\\
&= \comsd(x) - \eta'(|x|)\frac{x_k}{|x|} \S u(x).
\end{aligned}
\]
From the representation in~(\ref{eq:diff_Su}), we obtain that
\[
\begin{aligned}
|\comsd(x)| &\leq C \int_{\R^N} \frac{|\eta(x) - \eta(y)|}{|\ld[x] - \ld[y]|^N} \, |u(y)| \, dy 
\leq 
C \int_{\R^N} \frac{|u(y)|}{|x-y|^{N-1}} \, dy.\\
\end{aligned}
\]
Thus,~$|\comsd(x)| \leq C \Ie|u|(x)$, so
\[
\begin{aligned}
\Np{\comsdd}{r} 
& \leq  C r \, \int_0^{\infty} \Qw{N}{1} \left( \frac{\rho}{r} \right) \, \Np{u}{\rho} \, \frac{d\rho}{\rho}
 \leq C \| u \|_{\Xp}.
\end{aligned}
\]
Similarly, since~$|\eta' \S u| \leq |\eta'| \Ie|u|$, we obtain that
$
\Np{\eta' \S u}{r} \leq C \, \|u\|_{\Xp} 
$
for~$r_0/2 \leq r \leq 2r_0$, and that~$\eta' \S u = 0$ otherwise.
Since also
\[
\begin{aligned}
\Np{\nabla \S \comcomp}{r} &\leq C \int_{0}^{\infty} \Qw{N}{0} \left( \frac{\rho}{r} \right) \, \Np{\comcomp}{\rho} \, \frac{d\rho}{\rho}
 \leq C \int_{|y|>r_0} \frac{|u(y)|}{|y|^N} \, dy ,
\end{aligned}
\]
it is clear that
\begin{equation}
\label{eq:est_Np_comm_c}
\Np{\nabla \bigl( \left[ \S, \eta \right] \! u + \S \comcomp \bigr)}{r} 
\leq C \|u\|_{\Xp}, \quad \frac{r_0}{2} \leq r \leq 4r_0.
\end{equation}
The desired result now follows from~(\ref{eq:est_Np_comm_m}),~(\ref{eq:est_Np_comm_p}), and~(\ref{eq:est_Np_comm_c}).
\end{proof}

\subsection{Proof of Theorem~\ref{t:i:local}}
Lemma~\ref{l:est_Np_comm} implies, for~$u$ in~$\Xp$, that~$\coms$ belongs to~$\Yonew$ and 
that the radial part~$(\coms)_R$ tends to zero as~$r \rightarrow \infty$. Similarly, the same is true for~$\S \comcomp$.
Moreover, since~$f$ satisfies~(\ref{eq:req_local}),~$\eta f \in \Yonew$ and~$(\eta f)_R \rightarrow 0$.
Thus, by Theorem~\ref{t:i:exist},
\begin{equation}
\label{eq:Sw}
\S w = \eta f + \left[ \S, \eta \right] \! u + \S \comcomp
\end{equation}
has a solution~$w \in \Xp$ which satisfies
\begin{equation}
\label{eq:est_w}
\begin{aligned}
\Np{w}{r} \leq {} & \Cb \int_0^{r} \left( \frac{\rho}{r} \right)^{\Nh} \Np{\nabla ( \eta f + \coms + \S \comcomp)}{\rho} \, \frac{d\rho}{\rho}\\
& + \Cb \int_r^{\infty} \exp \biggl( 
\Ca \int_r^{\rho} \lcw(\nu) \, \frac{d\nu}{\nu}
\biggr) \, \Np{\nabla (\eta f + \coms + \comcomp)}{\rho} \, \frac{d\rho}{\rho}.
\end{aligned}
\end{equation}
Since also~$\S u = f$ for~$|x| \leq 2r_0$, equation~(\ref{eq:Sw}) can be rewritten as
\[
\S w = \eta f + \S \eta u - \eta \S u + \S \comcomp = \eta f + \S \eta u - \eta f + \S \comcomp = \S ( \eta u + \comcomp) .
\]
In other words,~$\S (w - \eta u - \comcomp) = 0$, and since~$w - \eta u - \comcomp$ satisfies the conditions in Theorem~\ref{t:i:uniq},
it follows that~$w = \eta u + \comcomp$. Thus, 
\[
\Np{u}{r} = \Np{\eta u + \comcomp}{r} = \Np{w}{r}, \quad 0 < r < r_0.
\]

We now turn to prove~(\ref{eq:i:assymp}) in Theorem~\ref{t:i:local}.
%
We split the right-hand side of~(\ref{eq:est_w}) in two parts: one which deals with~$\coms + \S \comcomp$, and one for~$f$. 
From Lemma~\ref{l:est_Np_comm}, we obtain that
\[
\begin{aligned}
\int_0^r \left(  \frac{\rho}{r} \right)^{\Nh} \Np{\nabla (\coms + \S \comcomp)}{\rho} \, \frac{d\rho}{\rho} 
&\leq
C \| u \|_{\Xp} r^{-\Nh} \int_0^{r} \rho^{\Nh}(\rho + \lcw(\rho)) \, \frac{d\rho}{\rho}\\
& \leq C(r + \lcw(r)) \| u \|_{\Xp}.
\end{aligned}
\]
Let~$D(a,b)$ be defined by
\[
\intl(a,b) = \exp \biggl(  \Ca \int_a^{b} \lcw(\nu) \, \frac{d\nu}{\nu}  \biggr), \quad a,b \geq 0.
\]
It is true that
\[
\int_r^{r_0} \intl(r,\rho) \, \Np{\nabla (\coms + \S \comcomp)}{\rho} \, \frac{d\rho}{\rho} 
\leq
C \| u \|_{\Xp} \int_{r}^{r_0} (\rho + \lcw(\rho)) \intl(r,\rho) \, \frac{d\rho}{\rho}.
\]
Since 
\[
\int_r^{r_0} \rho \intl(r,\rho) \, \frac{d\rho}{\rho}
= 
\intl(r,r_0) \int_{r}^{r_0} \intl(r_0,\rho) \, d\rho
\leq C \intl(r,r_0)
\]
and
\[
\int_r^{r_0} \lcw(\rho) \intl(r,\rho) \, \frac{d\rho}{\rho}
\leq
\frac{1}{\Ca} \int_r^{r_0} \frac{\partial}{\partial \rho} \intl(r,\rho) \, d\rho
\leq
\frac{\intl(r,r_0)}{\Ca},
\]
we obtain that
\[
\int_r^{r_0} \intl(r,\rho) \, \Np{\nabla (\coms + \S \comcomp)}{\rho} \, \frac{d\rho}{\rho} 
\leq
C \| u \|_{\Xp} \intl(r,r_0).
\]
It is clear that~$r + \lcw(r)$ is small whenever~$r$ is small, and also that~$\intl(r,r_0) \geq 1$ for~$r < r_0$.
Hence,~$r + \lcw(r) \leq C\intl(r,r_0)$ if~$r < r_0$. 
Finally,
\[
\begin{aligned}
\int_{r_0}^{\infty} \intl(r,{\rho}) \,  \Np{\nabla (\coms + \S \comcomp)}{\rho} \, \frac{d\rho}{\rho} 
& \leq
C \| u \|_{\Xp} \int_{r_0}^{\infty} e^{-N\log \rho} \intl(r,\rho) \, \frac{d\rho}{\rho} \\
& \leq
\frac{C}{r_0^N(N - \Ca \lcwz)} \| u \|_{\Xp} \intl(r,r_0), 
\end{aligned}
\]
where we used the argument from~(\ref{eq:aa2}) in Lemma~\ref{l:est_kern_Ku} to estimate the last integral.

Now, since~$\nabla(\eta f) = \nabla \eta f + \eta \nabla f$, and~$\nabla \eta$ only lives for~$r_0 < |x| < 2r_0$,
it follows that the contribution from~$f$ is bounded by
\[
\begin{aligned}
& Cr^{-\Nh} \int_0^r {\rho}^{\Nh} \, \Np{\nabla f}{\rho} \, \frac{d\rho}{\rho}
+ C \intl(r,r_0) \int_r^{2r_0} \intl(r_0,\rho) \, \Np{\nabla f}{\rho} \, \frac{d\rho}{\rho}\\
& \qquad \qquad + C \intl(r,r_0) \int_{r_0/2}^{2r_0} \Np{f}{\rho} \, \frac{d\rho}{\rho}.\\
\end{aligned}
\]
This completes the proof.

\subsection{The Case of Summable~\boldmath{$\lcw$}}
Let the conditions of Theorem~\ref{t:i:local} be satisfied and
suppose that~$(\nu \mapsto \lcw(\nu)/\nu)$ belongs to~$L^1(0,2r_0)$. 
Then~$\intl(r,\rho) \leq C$ for~$0 \leq r \leq \rho \leq 1$, and from Theorem~\ref{t:i:local} it follows that
\[
\begin{aligned}
\Np{u}{r} &\leq C \int_0^r \left( \frac{\rho}{r} \right)^{\Nh} \, \Np{\nabla f}{\rho} \, \frac{d\rho}{\rho}
+ C \int_r^{2r_0} \Np{\nabla f}{\rho} \, \frac{d\rho}{\rho} + C(u,f)\\
&= 
 C \int_0^{2r_0} \Qw{\Nh}{0} \left( \frac{\rho}{r} \right) \, \Np{\nabla f}{\rho} \, \frac{d\rho}{\rho}
+ C(u,f).\\
\end{aligned}
\]
If also~$(\rho \mapsto \Np{\nabla f}{\rho}/\rho) \in L^1(0,2r_0)$,
then~$\Np{u}{r} \leq C$ for~$r < r_0$, where~$C$ depends on~$f$.

\subsection{Example:~\boldmath{$\Np{\nabla f}{r} \leq C r^{-\alpha}$}}
\label{s:est_Np_ralpha}
Let~$0 < \alpha < \Nh$.
Suppose that~$\lcw(r) \rightarrow 0$ as~$r \rightarrow 0$. Then there exists~$r_1 > 0$ such
that~$\lcw(r) \leq \alpha/(2\Ca)$ for~$0 < r < r_1$. 
Now, suppose that~$\Np{\nabla f}{r} \leq C r^{-\alpha}$ for~$r < 2r_0$. 
Let~$r < r_0$. Then Theorem~\ref{t:i:local} implies that
\begin{equation*}
\label{eq:assymp_ex1}
\begin{aligned}
\Np{u}{r} \leq {} & 
 \frac{C}{\Nh - \alpha}  r^{-\alpha}
+ C \int_r^{2r_0} \rho^{-\alpha} \intl(r,\rho) \, \frac{d\rho}{\rho}
+ C(u,f) \intl(r,r_0) ,
\end{aligned}
\end{equation*}
where 
\[
C(u,f) = C \biggl( \|u \|_{\Xp} + \int_{r_0/2}^{2r_0} \Np{f}{\rho} \, \frac{d\rho}{\rho} \biggr)
\]
and~$C$ only depends on~$N$ and~$p$.
Now, 
\[
\int_r^{2r_0} \rho^{-\alpha} \intl(r,\rho) \, \frac{d\rho}{\rho} =
r^{-\alpha} \int_r^{2r_0} \left( \frac{r}{\rho} \right)^{\alpha} \intl(r,\rho) \, \frac{d\rho}{\rho}. 
\]
Suppose that~$r < r_1$. 
We split the domain of integration in two parts. For the first part, we obtain that
\[
\begin{aligned}
\int_r^{r_1} \left( \frac{r}{\rho} \right)^{\alpha} \intl(r,\rho) \, \frac{d\rho}{\rho} &=
\int_r^{r_1} \exp\left( \Ca \int_r^{\rho} \left( \lcw(\nu) - \frac{\alpha}{\Ca} \right) \frac{d\nu}{\nu} \right) \, \frac{d\rho}{\rho}\\
&\leq \int_{r}^{r_1} \left( \frac{r}{\rho} \right)^{\alpha/2} \frac{d\rho}{\rho} \leq \frac{2}{\alpha}
\end{aligned} 
\]
since~$\lcw(\nu) - \alpha/\Ca \leq - \alpha/(2\Ca)$.
For the second part,
\[
\begin{aligned}
\int_{r_1}^{2r_0}  \left( \frac{r}{\rho} \right)^{\alpha} \intl(r,\rho) \, \frac{d\rho}{\rho} &=
\int_{r_1}^{2r_0}  \left( \frac{r}{r_1} \right)^{\alpha}  \intl(r,r_1) \left( \frac{r_1}{\rho} \right)^{\alpha} \intl(r_1,\rho) \, \frac{d\rho}{\rho}\\
&\leq \int_{r_1}^{2r_0} \left( \frac{r}{r_1} \right)^{\alpha/2} \left( \frac{r_1}{\rho} \right)^{\alpha} \intl(r_1,\rho) \, \frac{d\rho}{\rho}\\
&\leq \frac{r_1^{-\alpha/2} \intl(r_1,2r_0)}{\alpha} r^{\alpha/2},
\end{aligned} 
\]
similarly with the estimate for the first part.
Hence,
\[
\int_r^{2r_0} \rho^{-\alpha} \intl(r,\rho) \, \frac{d\rho}{\rho} \leq \frac{2}{\alpha} r^{-\alpha} + \frac{r_1^{-\alpha/2} \intl(r_1,2r_0)}{\alpha} r^{-\alpha/2} \leq C_{\alpha} r^{-\alpha}.
\]
Thus,
\[
\begin{aligned}
\Np{u}{r} \leq {} & 
C({\alpha}) r^{-\alpha} \leq C(u,f,{\alpha}) r^{-\alpha}
+ C(u,f) \intl(r,r_0) ,
\end{aligned}
\]
where~$C({\alpha})$ is a constant that depends on~$\alpha$. The last inequality follows from the fact that
it is possible to estimate~$\intl(r,r_0) \leq C({\epsilon})r^{-\epsilon}$ for every~$\epsilon > 0$, which follows 
analogously with the argument used above.


\def\bibname{References}

\end{document}